\documentclass[11pt]{amsart}
\usepackage{geometry}                
\geometry{letterpaper}                   
\usepackage{graphicx}
\usepackage{amssymb}
\usepackage{epstopdf}
\DeclareGraphicsRule{.tif}{png}{.png}{`convert #1 `dirname #1`/`basename #1 .tif`.png}

\newtheorem{theorem}{Theorem}[section]
\newtheorem{lemma}{Lemma}[section]
\newtheorem{corollary}{Corollary}[section]
\newtheorem{proposition}{Proposition}[section]

\newtheorem{definition}{Definition}[section]
\newtheorem{example}{Example}[section]
\newtheorem{remark}{Remark}[section]

\numberwithin{equation}{section}

\def\Z{\mathbb Z}

\def\R{\mathbb R}
\def\P{\mathbb P}
\def\C{\mathbb C}

\def\D{\Delta}
\def\Om{\Omega}

\def\om{\omega}
\def\d{\partial}
\def\s{\sigma}
\def\e{\epsilon}
\def\a{\alpha}
\def\b{\beta}
\def\g{\gamma}

\def\sI{{\mathsf I}}
\def\sM{{\mathsf M}}
\def\sR{{\mathsf R}}

\def\bfc{{\mathbf c}}
\def\bfc\Theta{\mathcal c\Theta}
\def\cP{\mathcal P}
\def\bfOm{{\mathbf \Omega}}
\def\paOm#1#2{{\mathbf \Omega}_{#1,\,|\sim|' #2}}

\title[Spaces of polynomials as Grassmanians for traversing flows]{Spaces of polynomials with constrained divisors as Grassmanians for traversing flows}
\author{Gabriel Katz}
\date{03/22/2022}                                           

\address{MIT, Department of Mathematics, 77 Massachusetts Ave., Cambridge, MA 02139, U.S.A.}

\email {gabkatz@gmail.com }

\begin{document}
\maketitle 

\begin{abstract} We study {\sf traversing} vector flows $v$ on smooth compact manifolds $X$ with boundary. For a given compact manifold $\hat X$, equipped with a traversing vector field $\hat v$ which is {\sf convex} with respect to $\partial\hat X$, we consider submersions/embeddings $\alpha: X \to \hat X$ such that $\dim X = \dim \hat X$ and $\alpha(\partial X)$ avoids a priory chosen tangency patterns $\Theta$ to the $\hat v$-trajectories. In particular, for each $\hat v$-trajectory $\hat\gamma$, we restrict the cardinality of $\hat\gamma \cap \alpha(\partial X)$ by an even number $d$. We call $(\hat X, \hat v)$ a {\sf convex pseudo-envelop/envelop} of the pair $(X, v)$.  Here the vector field $v = \alpha^\dagger(\hat v)$ is the $\alpha$-transfer of $\hat v$ to $X$. 

For a fixed $(\hat X, \hat v)$, we introduce an equivalence relation among convex pseudo-envelops/ envelops $\alpha: (X, v) \to (\hat X, \hat v)$, which we call a {\sf quasitopy}. The notion of quasitopy is a crossover between bordisms of pseudo-envelops and their pseudo-isotopies. In the study of quasitopies $\mathcal{QT}_d(Y, \mathbf c\Theta)$, the spaces $\mathcal P_d^{\mathbf c\Theta}$ of real univariate polynomials of degree $d$ with real divisors whose combinatorial types avoid the closed poset $\Theta$ play the classical role of Grassmanians. 

We compute, in the homotopy-theoretical terms that involve $(\hat X, \hat v)$ and $\mathcal P_d^{\mathbf c\Theta}$, the quasitopies of convex envelops which avoid the $\Theta$-tangency patterns. We introduce characteristic classes of pseudo-envelops and show that they are invariants of their quasitopy classes. Then we prove that the quasitopies $\mathcal{QT}_d(Y, \mathbf c\Theta)$ often stabilize, as $d \to \infty$.
\end{abstract}

\section{Introduction}

This paper is the second in a series, which is inspired by the works of Arnold \cite{Ar}, \cite{Ar1},  and Vassiliev \cite{V}. It is a direct continuation of \cite{K9}, where many ideas of this article are present. Here we apply these ideas to traversing flows on manifolds with boundary. As \cite{K9}, this article relies heavily on computations from \cite{KSW1}, \cite{KSW2}. While \cite{K9} is studying immersions of compact $n$-dimensional manifolds into products $\R \times Y$, where $Y$ is a compact $n$-dimensional manifold, the present paper deals with special traversing flows on compact $(n+1)$-dimensional manifolds $X$, the flows that admit a ``virtual global section". The existence of such sections allows to establish a transparent correspondence between the universe of immersions into the products and the universe of traversing flows with virtual sections. In fact, with this correspondence in place, many results of the present article are just reformulations of the corresponding results from \cite{K9}, thus motivating and justifying them. However, we prove also many propositions  with no analogues in \cite{K9} (for example, Proposition \ref{prop.homology_sphere}, Theorem \ref{th.traversally_generic}, Corollary \ref{cor.codim}, Proposition \ref{prop.choice_of_convex_field}, and Theorem \ref{th.counting_trajectories}).\smallskip

Let us describe here our results informally, in a manner that clarifies their nature, but does not involve their most general forms, which carry the burden of combinatorial decorations. \smallskip

As we have mentioned above, we study {\sf traversing} vector flows $v$ (see Definition \ref{def.traversing} and \cite{K1}, \cite{K2}) on smooth compact $(n+1)$-dimensional manifolds $X$ with boundary. Every traversing flow admits a Lyapunov function, and this fact may serve as a working definition of such flows. We are interested in the {\sf combinatorial patterns} $\om$ that describe how the $v$-trajectories are tangent to the boundary $\d X$. Specifically, we are concerned with the traversing vector flows whose tangency patterns $\om$ do not belong to a given closed poset $\Theta$. Depending on $\Theta$, the very existence of such flows puts strong restrictions on the topology of $X$ (see \cite{K1}-\cite{K8}, \cite{K10}). For example, in Fig.\ref{fig.A1}, $\Theta$ includes the cubic and the double tangencies of $\d X$ to the family of $u$-directed lines. In other words, in this figure, the tangency patterns $\om = (3)$ and $\om = (22)$ are forbidden. \smallskip

In order to simplify the investigation of such flows, we ``envelop" them into, so called, {\sf convex pseudo-envelops} $(\hat X, \hat v)$. A convex pseudo-envelop consists of: {\bf(i)} a compact smooth $(n+1)$-manifold $\hat X$, {\bf(ii)} a traversing vector field $\hat v$ on it such that the boundary $\d \hat X$ is {\sf convex} (see Definition \ref{def.convex}) with respect to the $\hat v$-flow, {\bf(iii)} a submersion $\a: X \to \hat X$ such that $\a(\d X)$ avoids a forbidden tangency patterns $\Theta$ to the $\hat v$-trajectories, and {\bf(iv)} the pull-back $\a^\dagger(\hat v)$ of $\hat v$, coinciding with the given vector field $v$ (see Fig.\ref{fig.A1}, in which $\hat X$ is a cylinder). 

\begin{figure}[ht]
\centerline{\includegraphics[height=2.6in,width=3.6in]{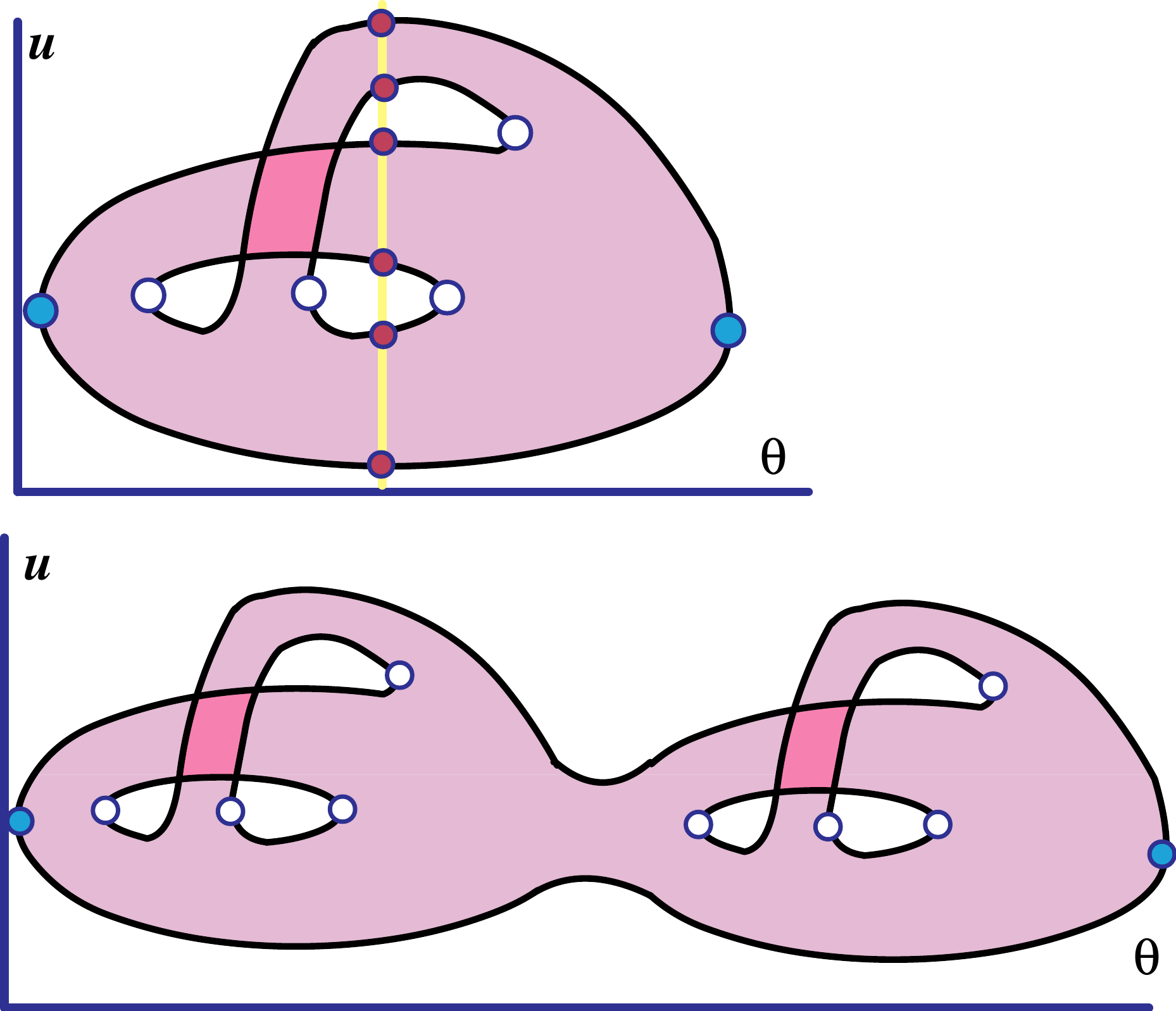}}
\bigskip
\caption{\small{Convex pseudo-envelops $\a: (X, \a^\dagger(\d_u)) \to (\hat X, \d_u)$ of a punctured torus $X$ (on the top), and of a punctured surface $X$ of genus $2$ (on the bottom). The envelop $\hat X$ is the cylinder $[0, 1] \times S^1$, equipped with the constant vector field $\d_u$. Both submersions $\a$ are generic relative to the vertical vector field $\d_u$ on the cylinder. The dots mark the multiplicity $2$ tangencies of $\a(\d X)$ to the $\theta$-fibers; the self-intersections of $\a(\d X)$ are unmarked. In both examples, the cardinality of the fibers $\theta \circ \a^\d: \d X \to S^1$ does not exceed $6$. }}
\label{fig.A1}
\end{figure}

The existence of a convex pseudo-envelop provides the $v$-flow on $X$ with a ``{\sf virtual global section}", a significantly effective tool.  Its existence allows us to transfer the results from \cite{K9} about immersions/embeddings $\{\b: M \to \R \times Y\}$, where $M$ and $Y$ are compact smooth $n$-manifolds, to the parallel results about convex envelops of traversing flows.

Next, we organize convex pseudo-envelops $\{\a: (X, v) \to (\hat X, \hat v)\}$ with the fixed target $(\hat X, \hat v)$ into the {\sf quasitopy} equivalence classes $\mathcal{QT}_d(\hat X, \hat v; \mathbf c\Theta)$, where $d$ is a given even natural number. For a typical $\hat v$-trajectory $\hat\g \subset \hat X$, $d$ gives an upper bound of the cardinality of the set  $\hat \g \cap \a(\d X)$.  Fig. \ref{fig.A_quasitopy} may give  some impression of  the nature of quasitopy;  it depicts a pair of submersions $\a_0, \a_1$, linked by a kind of cobordism with a strict combinatorial control of its tangent patterns.\smallskip

The quasitopies come in two flavors: $\mathcal{QT}_d^{\mathsf{emb}}(\hat X, \hat v; \mathbf c\Theta)$, formed by {\sf regular embeddings} $\a$, and $\mathcal{QT}_d^{\mathsf{sub}}(\hat X, \hat v; \mathbf c\Theta)$, formed by more general {\sf submersions} $\a$. 

Consider the locus $\d_1^+\hat X(\hat v) \subset \d\hat X$, where $\hat v$ points inside of $\hat X$, and the locus $\d_2^-\hat X(\hat v) \subset \d_1^+\hat X(\hat v)$, where $\hat v$ is tangent to $\d\hat X$. Then we construct two classifying maps
\begin{eqnarray}\label{eq.A_class} 
\Phi^{\mathsf{emb}}: \mathcal{QT}_d^{\mathsf{emb}}(\hat X, \hat v; \mathbf c\Theta) \to [(\d_1^+\hat X(\hat v), \d_2^-\hat X(\hat v)),\, (\mathcal P_d^{\mathbf c\Theta}, pt)],\\
\Phi^{\mathsf{sub}}: \mathcal{QT}_d^{\mathsf{sub}}(\hat X, \hat v; \mathbf c\Theta) \to [(\d_1^+\hat X(\hat v), \d_2^-\hat X(\hat v)),\, (\mathcal P_d^{\mathbf c\Theta}, pt)], \nonumber
\end{eqnarray}
whose targets are sets of homotopy classes from the quotient space $\d_1^+\hat X(\hat v)\big/ \d_2^-\hat X(\hat v)$ to the universal space $\mathcal P_d^{\mathbf c\Theta}$---the ``Grassmanian"---, formed by real polynomials of degree $d$ whose real divisors avoid a given poset $\Theta$.    

Then we prove (see Theorem \ref{th.envelops}) that $\Phi^{\mathsf{emb}}$ is a {\sf bijection} and $\Phi^{\mathsf{sub}}$ is a {\sf split surjection}.  

For a constant vector field $\hat v$ on the standard ball $D^{n+1}$, the sets  
$\mathcal{QT}_d^{\mathsf{sub/emb}}(D^{n+1}, \hat v; \mathbf c\Theta)$ are {\sf groups}; the group operation is induced by the boundary connected sum of pseudo-envelops/envelops.  These groups are abelian for $n > 1$. The group $\mathcal{QT}_d^{\mathsf{sub}}(D^{n+1}, \hat v; \mathbf c\Theta)$ is a split extension of $\mathcal{QT}_d^{\mathsf{emb}}(D^{n+1}, \hat v; \mathbf c\Theta)$ (see Theorem \ref{th.envelops on disks}).
We prove that $$\mathcal{QT}_d^{\mathsf{emb}}(D^{n+1}, \hat v; \mathbf c\Theta) \approx \pi_n(\mathcal P_d^{\mathbf c\Theta}, pt).$$ 
In the process, in the Subsection 4.2, we notice an interesting relation between the connected components of the of the space of convex traversing flows on the standard $(n+1)$-ball and the monoid $\mathbf{HS}_{n-1}$ of smooth types of {\sf integral homology $(n-1)$-spheres}, being considered up to connected sums with smooth {\sf homotopy} $(n-1)$-spheres (see Proposition \ref{prop.homology_sphere}).

The groups $\mathcal{QT}_d^{\mathsf{sub/emb}}(D^{n+1}, \hat v; \mathbf c\Theta)$ act on the quasitopies of convex pseudo-envelops \hfill\break  $\mathcal{QT}_d^{\mathsf{sub/emb}}(\hat X, \hat v; \mathbf c\Theta)$, and the classifying maps $\Phi^{\mathsf{sub/emb}}$ in (\ref{eq.A_class}) are equivariant. \smallskip

These classifying maps deliver a variety of (co)homotopy and (co)homology invariants of traversing flows which admit convex pseudo-envelops (see Theorem \ref{th.A_car_classes}, Proposition \ref{prop.Vass}, and Theorem \ref{th.counting_trajectories}). \smallskip

We also establish several of results (Theorem \ref{th.envelop_stabilization}, Corollary \ref{cor.STABILIZATION}) about the stabilization of the sets $\mathcal{QT}_d^{\mathsf{emb}}(\hat X, \hat v; \mathbf c\Theta)$ as $d \to \infty$.\smallskip

Finally,  we compute $\mathcal{QT}_d^{\mathsf{emb}}(\hat X, \hat v; \mathbf c\Theta)$ for may special cases of $\Theta$ and $(\hat X, \hat v)$.


\section{Spaces of real polynomials with constrained real divisors}
For our reader convenience, we state here a number of results from \cite{KSW1} and \cite{KSW2} about the topology of spaces of real monic univariate polynomials with constrained real divisors. These results are crucial for the applications to follow. To make the present paper even partially self-contained, we are basically recycling Section 2 from \cite{K9}. \smallskip

Let $\cP_d$ denote the space of real monic univariate polynomials of degree $d$. Given a polynomial $P(u) = u^d + a_{d-1}u^{d-1} + \cdots + a_0$ with real coefficients,  we consider its \emph{real divisor} $D_\R(P)$. 
Let $\om_i$ denotes the multiplicity of the $i$-th real root of $P$, the real roots being ordered by their magnitude.  We call the ordered $\ell$-tuple of natural numbers
$\om = (\omega_1 ,\ldots, \omega_\ell)$  
the {\sf real root multiplicity pattern} of $P(u)$, or the {\sf multiplicity pattern} for short.


Such sequences $\om$ form a {\sf universal poset} $(\mathbf\Om, \succ)$. The partial order ``$\succ$" in $\mathbf\Om$ is defined in terms of two types of elementary operations: {\sf merges} $\{\mathsf M_i\}_i$ and {\sf inserts} $\{\mathsf I_i\}_i$ 
The operation $\mathsf M_i$ merges a pair of adjacent entries $\om_i, \om_{i+1}$ of $\om = (\om_1, \dots, \om_i, \om_{i+1}, \dots, \om_q)$ into a single component $\tilde\om_i = \om_i + \om_{i+1}$, thus forming a new shorter sequence $\mathsf M_i(\om) = (\om_1, \dots, \tilde\om_i, \dots, \om_q)$. The insert operation $\mathsf I_i$ either insert $2$ in-between $\om_i$ and $\om_{i+1}$, thus forming a new longer sequence $\mathsf I_i(\om) = (\dots, \om_i, 2, \om_{i+1}, \dots)$, or, in the case of $\mathsf I_0$, appends $2$ before the sequence $\om$, or,  in the case $\mathsf I_q$,  appends $2$ after the sequence $\om$. 





We define the order $\om \succ \om'$, where $\om, \om' \in \mathbf \Om$, if one can produce $\om'$ from $\om$ by applying a sequence of these elementary operations.  
\smallskip

For a sequence $\omega = (\omega_1, \omega_2, \, \dots \, ,  \omega_q) \in \mathbf\Om$, we introduce the {\sf norm} and the {\sf reduced norm} of $\omega$ by the formulas: 
\begin{eqnarray}\label{eq2.2}
|\omega| =_{\mathsf{def}} \sum_i\; \omega_i \quad \text{and} \quad |\omega|'  =_{\mathsf{def}} \sum_i\; (\omega_i -1).
\end{eqnarray}
Note that $q$, the cardinality of the {\sf support} of $\omega$, is equal to $|\omega| - |\omega|'$.
\smallskip

Let $\mathring{\sR}^\omega_d$ be of the set of all polynomials with the real root multiplicity pattern $\omega$, and let $\sR_d^\omega$ be its closure in $\cP_d$.

 For a given collection $\Theta$ of multiplicity patterns $\{\om\}$, which share the pairity of their norms $\{|\om|\}$ and is closed under the merge and insert operations, we consider the union $\cP_d^\Theta$ of the subspaces $\mathring{\sR}_d^{\omega}$, taken over all $\omega \in \Theta$ such that $|\om| \leq d$ and $|\om| \equiv  d \mod 2$. We denote by $\cP_d^{\mathbf c\Theta}$ its {\sf complement} $\cP_d \setminus \cP^\Theta_d$. \smallskip

Since $\cP^\Theta_d$ is contractible, it makes more sense to consider its one-point compactification $\bar{\cP}_d^\Theta$. 
If the set $\bar{\cP}_d^{\Theta}$ is closed in $\bar{\cP}_d$, by the Alexander duality on the sphere  $\bar{\cP}_d \cong S^{d}$, we get
$$H^j(\cP_d^{\mathbf c\Theta}; \Z) \approx H_{d - j -1}(\bar{\cP}_d^{\Theta}; \Z).$$
This implies that  the spaces 
$\cP_d^{\mathbf c\Theta}$ and $\bar{\cP}_d^{\Theta}$ 
carry the same (co)homological information. Let us describe it in \emph{pure combinatorial terms}. \smallskip

Let us consider the following domain in $\R^{d+1}$:
\begin{eqnarray}\label{eq.E}
\mathcal E_d & =_{\mathsf{def}} & \big\{(u, P) \in  \R \times \mathcal P_d |\; P(u) \leq 0\big\} \text{\; and its boundary} \nonumber \\
\d\mathcal E_d & =_{\mathsf{def}} & \big\{(u, P) \in  \R \times \mathcal P_d |\; P(u) = 0\big\}. 
\end{eqnarray}
The pair $(\mathcal E_d, \d\mathcal E_d)$ is of a fundamental importance for us.
\smallskip

For a  subposet $\Theta \subset \bfOm$ and natural numbers $d, k$, we introduce the following notations:
\begin{eqnarray}\label{eq.Theta} 
\Theta_{[d]} =_{\mathsf{def}} &\{\om \in \Theta : \; |\om| = d \},\quad
\Theta_{\langle d]} =_{\mathsf{def}} \{\om \in \Theta : \; |\om| \leq d\}, \\ 
\Theta_{|\sim|' =k} =_{\mathsf{def}} & \{\om \in \Theta : \; |\om|' = k \}, \quad \Theta_{|\sim|' \geq k} =_{\mathsf{def}} \{\om \in \Theta : \; |\om|' \geq k \}.  \nonumber
 \end{eqnarray}
 
 Assuming that $\Theta\subset \bfOm$ is a \emph{closed} sub-poset, let 
$$\mathbf c\Theta =_{\mathsf{def}} \mathbf\Om \setminus \Theta \text{\; and \;} \mathbf c \Theta_{\langle d]} =_{\mathsf{def}} \bfOm_{\langle d]} \setminus \Theta_{\langle d]}.$$


We denote by $\Z[\bfOm_{\langle d]}]$ the free $\Z$-module, generated by the elements of $\bfOm_{\langle d]}$. Using the merge operators $\sM_k$ and the insert operators $\sI_k$  from \cite{KSW2}  on $\bfOm$,
we define  homomorphisms $\d_{\sM}: \Z[\bfOm_{\langle d]}] \to \Z[\bfOm_{\langle d]}]$, $\d_{\sI}: \Z[\bfOm_{\langle d]}] \to \Z[\bfOm_{\langle d]}]$ by 
$$\d_{\sM} (\omega) =_{\mathsf{def}} - \sum_{k=1}^{s_\omega-1} (-1)^k \sM_k(\omega) \, \text{  and   }\, \partial_{\sI} (\omega) =_{\mathsf{def}}  \sum_{k=0}^{s_\omega} (-1)^k \sI_k(\omega),$$
where $s_\omega =_{\mathsf{def}} |\omega| - |\omega|'$. In fact, $\d_{\sM}$ and $\d_{\sI}$ are anti-commuting differentials \cite{KSW2}.
Therefore, the sum
\begin{eqnarray}\label{eq.d+d}
\d  =_{\mathsf{def}} \d_{\sM} +\d_{\sI} : \Z[\bfOm_{\langle d]}] \to \Z[\bfOm_{\langle d]}]
\end{eqnarray}
 is a differential. 
%
\smallskip

For a closed poset $\Theta$, the restrictions of the operators $\d, \d_{\sM}$, and $\d_{\sI}$ to the free $\Z$-module $\Z[\Theta]$ are well-defined. Thus, for any closed subposet  $\Theta_{\langle d]} \subset \mathbf\Om_{\langle d]}$, we may consider the differential complex $\d: \Z[\Theta_{\langle d]}] \to \Z[\Theta_{\langle d]}]$, whose $(d-j)$-grading is defined by the module $\Z[\Theta_{\langle d]} \cap \Theta_{|\sim|' = j}]$. 
We denote by $\d^\ast: \Z[\Theta_{\langle d]}]^\ast \to \Z[\Theta_{\langle d]}]^\ast$ its dual operator, where $\Z[\Theta_{\langle d]}]^\ast =_{\mathsf{def}} \mathsf{Hom}(\Z[\Theta_{\langle d]}], \Z)$.
\smallskip

Then we consider the quotient set $\mathbf\Theta^\#_{\langle d]} := \mathbf\Om_{\langle d]}\big / \Theta_{\langle d]}$. For the closed subposet $\Theta_{\langle d]}$, the partial order in $\mathbf\Om_{\langle d]}$ induces a partial order in the quotient $\mathbf\Theta^\#_{\langle d]}$.

Finally, we introduce a new differential complex $(\Z[\mathbf\Theta^\#_{\langle d]}], \d^\#)$ by including it in the short exact sequence of differential complexes:
\begin{eqnarray}\label{eq.quotient_complex}
0 \to (\Z[\Theta_{\langle d]}], \d) \to (\Z[\mathbf\Om_{\langle d]}], \d) \to (\Z[\mathbf\Theta^\#_{\langle d]}], \d^\#) \to 0.
\end{eqnarray}

We will rely on the following result from \cite{KSW2}, which reduces the computation of the reduced cohomology $\tilde H^\ast(\cP_d^{\mathbf c\Theta}; \Z)$ to Algebra and Combinatorics. 

\begin{theorem}\label{thA}{\bf (\cite{KSW2})} 
  Let $\Theta \subset \bfOm_{\langle d]}$ be a closed subposet. 
  Then, for any $j \in [0, d]$, we get group isomorphisms
   \begin{eqnarray}\label{eq.3.5A} 
    \tilde H^j(\cP_d^{\mathbf c\Theta}; \Z)\;  \stackrel{\mathcal D}{\approx} \;H_{d-j}(\bar{\cP}_d, \bar{\cP}_d^{\Theta}; \Z)\; 
    \approx  H_{d- j}\big(\d^\#:  \Z[\mathbf\Theta^\#] \to \Z[\mathbf\Theta^\#]\big), \\
  \quad \tilde  H_j(\cP_d^{\mathbf c\Theta}; \Z)\;  \stackrel{\mathcal D}{\approx} \;H^{d-j}(\bar{\cP}_d, \bar{\cP}_d^{\Theta}; \Z)\; 
    \approx  H_{d- j}\big((\d^\#)^\ast:  (\Z[\mathbf\Theta^\#])^\ast \to (\Z[\mathbf\Theta^\#])^\ast\big), \nonumber
      \end{eqnarray}
      where $\mathcal D$ is the Poincar\'{e} duality isomorphism. \hfill $\diamondsuit$
 \end{theorem}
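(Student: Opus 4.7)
\medskip

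\noindent\textbf{Proof plan for Theorem \ref{thA}.}

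The first isomorphism $\mathcal D$ in each row is Alexander duality on the sphere $\bar{\cP}_d \cong S^d$. Since $\Theta$ is a \emph{closed} subposet, the union $\cP_d^{\Theta}$ is a closed semi-algebraic subset of $\cP_d$, and the point at infinity is a common compactification point for $\bar{\cP}_d$ and $\bar{\cP}_d^{\Theta}$. Thus $\cP_d^{\mathbf c\Theta} \subset S^d$ is an open complement of $\bar{\cP}_d^{\Theta}$, and the standard duality yields
\[
\tilde H^j(\cP_d^{\mathbf c\Theta};\Z) \;\approx\; \tilde H_{d-j-1}(\bar{\cP}_d^{\Theta};\Z) \;\approx\; H_{d-j}(\bar{\cP}_d,\,\bar{\cP}_d^{\Theta};\Z),
\]
where the second identification uses the long exact sequence of the pair together with the fact that $\bar{\cP}_d$ is a sphere whose top-dimensional fundamental class lives in $\cP_d^{\mathbf c\Theta}$ (generic polynomials have simple real and complex roots). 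The same reasoning, dualized, gives the homological statement.

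Next, I would construct a CW-style stratification of $\bar{\cP}_d$ whose open cells are precisely the strata $\mathring{\sR}_d^\omega$ for $\omega \in \bfOm_{\langle d]}$, together with one top cell coming from the generic stratum $\omega = (1,1,\dots,1)$ (when $d$ is even, with all roots simple-and-complex) and the basepoint at infinity. A polynomial with real-root pattern $\omega$ is parametrized by the $q = |\omega|-|\omega|'$ ordered real root locations plus the $\tfrac12(d-|\omega|)$ unordered complex-conjugate pairs; hence $\mathring{\sR}_d^\omega$ is an open ball of real dimension $d - |\omega|'$. This matches the grading asserted in the theorem: the $(d{-}j)$-chain group in the combinatorial complex is $\Z[\bfOm_{\langle d]} \cap \bfOm_{|\sim|'=j}]$.

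The core analytic step is to identify the cellular boundary with $\d = \d_{\sM} + \d_{\sI}$. Along a generic path in the closure $\sR_d^\omega \setminus \mathring{\sR}_d^\omega$, a codimension-one degeneration happens in exactly two ways: two \emph{adjacent} real roots with multiplicities $\omega_i,\omega_{i+1}$ collide, producing the pattern $\sM_i(\omega)$; or a pair of complex conjugate roots lands on the real axis between the $i$-th and $(i{+}1)$-st real roots (or to the left/right of all of them), producing the pattern $\sI_i(\omega)$. One then trivializes the normal bundle of each codimension-one face via the local coordinates given by the elementary symmetric functions and computes the sign of each attaching map; the alternating signs $(-1)^k$ in the definitions of $\d_{\sM}$ and $\d_{\sI}$ arise from the ordering of roots and the orientation of the complex pair as it crosses the real line. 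Verifying that the anti-commutation $\d_{\sM}\d_{\sI} + \d_{\sI}\d_{\sM} = 0$ and the nilpotency of the total differential match the geometric boundary-of-boundary relations is the main technical obstacle, but once the local normal-form analysis is in place, it is a bookkeeping exercise on ordered collisions of roots.

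Finally, to pass from $\bfOm_{\langle d]}$ to $\mathbf\Theta^\#_{\langle d]}$, observe that $\Z[\Theta_{\langle d]}]$ is a \emph{subcomplex} precisely because $\Theta$ is closed under $\sM_i$ and $\sI_i$, and that its geometric realization is the cellular chain complex of $\bar{\cP}_d^{\Theta}$. The short exact sequence \eqref{eq.quotient_complex} is then the cellular short exact sequence of the pair $(\bar{\cP}_d,\bar{\cP}_d^{\Theta})$, so its quotient complex $(\Z[\mathbf\Theta^\#_{\langle d]}], \d^\#)$ computes $H_\ast(\bar{\cP}_d,\bar{\cP}_d^{\Theta};\Z)$. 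Dualizing yields the cohomological statement with $(\d^\#)^\ast$. Combined with the duality of the first step, this gives the two chains of isomorphisms asserted in \eqref{eq.3.5A}.
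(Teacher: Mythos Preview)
The paper does not prove Theorem~\ref{thA}; it is quoted from \cite{KSW2} as background (Section~2 is explicitly a recap of results from \cite{KSW1}, \cite{KSW2}, and the statement ends with a $\diamondsuit$ and no proof). So there is no ``paper's own proof'' to compare against.

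Your outline is a reasonable sketch of what the proof in \cite{KSW2} presumably does: Alexander duality on $\bar\cP_d\cong S^d$ supplies the first isomorphism, the $\omega$-stratification furnishes a cell structure with cells of dimension $d-|\omega|'$, and the identification of the cellular boundary with $\d_{\sM}+\d_{\sI}$ is the substantive step. Your description of the codimension-one degenerations (adjacent real roots colliding, or a complex conjugate pair landing on the real axis) is correct, and the passage to the quotient complex via the short exact sequence is exactly how the relative homology arises.

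One minor slip: you write ``one top cell coming from the generic stratum $\omega=(1,1,\dots,1)$ (when $d$ is even, with all roots simple-and-complex).'' These are two different strata: $\omega=(1,\dots,1)$ has $d$ simple \emph{real} roots, while all-complex corresponds to $\omega=(\emptyset)$. In fact every $\omega$ with all entries equal to $1$ (hence $|\omega|'=0$) gives a top-dimensional cell, so there are several of them. This does not affect your argument, but the parenthetical is self-contradictory as written.
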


Consider the embedding $\e_{d, d+2}: \mathcal P_d \to \mathcal P_{d+2}$, defined by the formula
\begin{eqnarray}\label{eq.stable} 
\e_{d, d+2}(P)(u) =_{\mathsf{def}} (u^2+1) \cdot P(u).
\end{eqnarray}
It preserves the 
$\bfOm$-stratifications of  the two spaces by the combinatorial types $\om$ of real divisors. The embedding $\e_{d, d+2}$ makes it possible to talk about  \emph{stabilization of the homology/cohomology} of the spaces $\bar{\mathcal P}_d^\Theta$ and $\mathcal P_d^{\mathbf c\Theta}$, as $d \to \infty$. \smallskip

For a closed poset $\Theta \subset \bfOm$, consider the closed finite poset $\Theta_{\langle d]} = \bfOm_{\langle d]} \cap \Theta$. It is generated by some 
\emph{maximal} elements $\omega_\star^{(1)}, \dots , \omega_\star^{(\ell)}$, where $\ell = \ell(d)$.  We introduce two useful quantities: 
\begin{eqnarray}\label{eq.eta(Theta)}
\eta_\Theta(d) & =_{\mathsf{def}} & \max_{i \,\in \,[1,\, \ell(d)]} \big\{\big(|\om_\star^{(i)}| - 2|\om_\star^{(i)}|'\big)\big\}, \\
\psi_\Theta(d) & =_{\mathsf{def}} & \frac{1}{2}\big(d + \eta_\Theta(d)\big).
\end{eqnarray}

Note that $$\psi_\Theta(d) =  \frac{1}{2}\;\min_{i \, \in \, [1,\ell(d)]}\big\{ (d- |\om_\star^{(i)}|') + (|\om_\star^{(i)}|-|\om_\star^{(i)}|')\big\} < d,$$ 
where both summands, the ``codimension" $(d- |\om_\star^{(i)}|')$ and the ``support" $(|\om_\star^{(i)}|-|\om_\star^{(i)}|')$, are positive and each does not exceed $d$. At the same time,
$\eta_\Theta(d)$ may be negative. \smallskip

In the stabilization results about quasitopies,  the quantity 
\begin{eqnarray}\label{eq.xi(Theta)}
\xi_\Theta(d+2) =_{\mathsf{def}} d+2 - \psi_\Theta(d+2).
\end{eqnarray}
%
plays the key role. 
 \smallskip

Now we are in position to state the main stabilization result from \cite{KSW2}: 

\begin{theorem}\label{th.main_stab} {\bf (short stabilization: $\mathbf{\{d \Rightarrow d+2\}}$)} Let $\Theta$ be a closed subposet of $\mathbf\Om$. Let the embedding $\e_{d, d+2}:\, \mathcal P_{d} \subset \mathcal P_{d+2}$ be as in (\ref{eq.stable}). \begin{itemize}
\item Then, for all  $j \geq  \psi_\Theta(d+2) -1$, we get a homological isomorphism 
$$(\e_{d, d+2})_\ast: H_j(\bar{\mathcal P}_{d}^\Theta; \Z) \approx H_{j+2}(\bar{\mathcal P}_{d+2}^{\Theta}; \Z),$$

\item and, for all $j \leq d+2 - \psi_\Theta(d+2)$, 
a homological isomorphism  $$(\e_{d, d+2})_\ast: H_j( \mathcal P_{d}^{\mathbf c\Theta}; \Z) \approx H_j(\mathcal P_{d+2}^{\mathbf c\Theta}; \Z). \quad \quad \diamondsuit$$
\end{itemize}
\end{theorem}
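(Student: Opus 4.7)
The plan is to translate both claims into a chain-level comparison via Theorem \ref{thA}, and to show that the relative chain complex vanishes---after a suitable grading shift---outside the asserted range. Since $u^2+1$ has no real roots, the real divisor of $(u^2+1)P(u)$ coincides with that of $P(u)$. Hence $\e_{d,d+2}$ carries $\mathring{\sR}_d^\om$ into $\mathring{\sR}_{d+2}^\om$ for every $\om\in\bfOm_{\langle d]}$ and restricts to closed inclusions $\bar{\cP}_d^\Theta \hookrightarrow \bar{\cP}_{d+2}^\Theta$ and $\cP_d^{\mathbf c\Theta} \hookrightarrow \cP_{d+2}^{\mathbf c\Theta}$. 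Under the identifications $\bar{\cP}_d \cong S^d$ and $\bar{\cP}_{d+2} \cong S^{d+2}$, the two bullets are Alexander/Poincar\'e duals of one another, so it suffices to establish the second.

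By Theorem \ref{thA}, the induced map $H_\ast(\cP_d^{\mathbf c\Theta};\Z) \to H_\ast(\cP_{d+2}^{\mathbf c\Theta};\Z)$ is identified with the map on homology of $\bigl((\Z[\mathbf\Theta^\#_{\langle d]}])^\ast, (\d^\#)^\ast\bigr) \to \bigl((\Z[\mathbf\Theta^\#_{\langle d+2]}])^\ast, (\d^\#)^\ast\bigr)$ induced by $\mathbf\Theta^\#_{\langle d]} \hookrightarrow \mathbf\Theta^\#_{\langle d+2]}$. The relative generators are precisely the classes $[\om]$ with $d < |\om| \leq d+2$ and $\om\notin\Theta$; the parity condition built into the definition of $\cP_d^\Theta$ forces $|\om|=d+2$, so only the ``top-norm'' patterns outside $\Theta$ contribute.

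Each such relative generator $[\om]$ lives in grading $d+2-|\om|'$. The quantities $\eta_\Theta(d+2)$ and $\psi_\Theta(d+2)$ of (\ref{eq.eta(Theta)})--(\ref{eq.xi(Theta)}) are calibrated precisely so that every class $[\om]$ with $|\om|=d+2$, $\om\notin\Theta$, that could survive in homology must lie in grading strictly greater than $d+2-\psi_\Theta(d+2)$. The long exact sequence associated to the short exact sequence (\ref{eq.quotient_complex}) of chain complexes then yields an isomorphism in the complementary range of gradings, which after dualizing is exactly the second bullet. The first bullet follows by Alexander duality, the shift $j\mapsto j+2$ reflecting the difference $d+2-d$ in ambient dimension.

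The main obstacle is controlling the relative complex at the chain level: because $\d_\sM$ and $\d_\sI$ mix generators of different $|\om|$ and of different supports $s_\om$, a grading bound on the generators does not immediately imply acyclicity of the quotient complex in the desired range. The quantities $\eta_\Theta$, $\psi_\Theta$ and $\xi_\Theta$ are defined precisely to handle this interaction: for each maximal element $\om_\star^{(i)}$ of $\Theta_{\langle d+2]}$, the images $\sM_k(\om)$ and $\sI_k(\om)$ of the top-norm relative generators must either be boundaries already captured at level $d$ or lie outside the stable range. Carrying out this case analysis on the maximal elements is the technical heart of the proof; once it is done, a standard five-lemma chase concludes.
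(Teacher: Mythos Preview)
The paper does not prove Theorem~\ref{th.main_stab}; it is quoted from \cite{KSW2} (note the ``$\diamondsuit$'' terminator and the attribution in the heading ``the main stabilization result from \cite{KSW2}''). So there is no in-paper proof to compare against. Your proposal is therefore being judged on its own merits as a sketch of the argument in \cite{KSW2}.

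As a strategy your outline is reasonable and matches the spirit of the combinatorial machinery: reduce via Theorem~\ref{thA} to the inclusion of differential complexes $\Z[\mathbf\Theta^\#_{\langle d]}]\hookrightarrow\Z[\mathbf\Theta^\#_{\langle d+2]}]$, identify the cokernel as being generated by the classes of $\om$ with $|\om|=d+2$, $\om\notin\Theta$, and then control the homology of this quotient. But what you have written is not a proof; it is a plan with the hard step explicitly deferred. Two concrete gaps:

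\emph{First}, the core claim---that every relative generator ``that could survive in homology'' lies in grading strictly above $d+2-\psi_\Theta(d+2)$---is asserted, not established. You say yourself that ``carrying out this case analysis on the maximal elements is the technical heart of the proof''; that is exactly the content of the theorem, and you have not done it. Note that the definition of $\eta_\Theta(d+2)$ and $\psi_\Theta(d+2)$ refers to maximal elements of $\Theta_{\langle d+2]}$, while the relative generators are $\om\notin\Theta$. Bridging these requires a genuine argument relating $|\om|'$ for $\om$ \emph{outside} $\Theta$ to the invariants $|\om_\star^{(i)}|,|\om_\star^{(i)}|'$ of the maximal elements \emph{inside} $\Theta$; merely saying the constants are ``calibrated precisely'' is circular.

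\emph{Second}, there is a grading bookkeeping issue. Under Theorem~\ref{thA}, $H_j(\cP_d^{\mathbf c\Theta})$ corresponds to $H_{d-j}$ of the dual complex for $\bfOm_{\langle d]}$, whereas $H_j(\cP_{d+2}^{\mathbf c\Theta})$ corresponds to $H_{(d+2)-j}$ of the dual complex for $\bfOm_{\langle d+2]}$. So the chain-level map you must analyze shifts degree by $2$; your text treats it as degree-preserving. This does not break the approach, but it does mean your long-exact-sequence step needs to be rewritten with the correct shift before the range $j\le d+2-\psi_\Theta(d+2)$ can be read off.
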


\begin{definition}\label{def.profinite}
A closed poset $\Theta\subseteq \mathbf\Om$ is called {\sf profinite} if, for all  integers $q \geq 0$, there exist only finitely many elements $\om \in \Theta$ such that $|\om|' \leq q$. 
  \hfill $\diamondsuit$
\end{definition}

\begin{corollary}\label{cor.stable_homology}{\bf (long stabilization: $\mathbf{\{d \Rightarrow \infty\}}$)} For any closed profinite poset $\Theta\subset \mathbf\Omega$,  
and for each $j$, the homomorphism
$$(\e_{d,d'})_\ast:\; H_j( \mathcal P_{d}^{\mathbf c\Theta}; \Z) \approx H_j(\mathcal P_{d'}^{\mathbf c\Theta}; \Z)$$
is an isomorphism for all sufficiently big $d \leq d'$, $d \equiv d' \mod 2$.  \hfill  $\diamondsuit$
\end{corollary}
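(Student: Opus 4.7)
The plan is to deduce long stabilization by iterating the short stabilization of Theorem \ref{th.main_stab} along the chain $d, d+2, d+4, \dots, d'$. For a fixed $j$, I seek a threshold $d_0 = d_0(j, \Theta)$ such that $\xi_\Theta(d+2) = d+2 - \psi_\Theta(d+2) \geq j$ for every $d \geq d_0$. Unpacking $\psi_\Theta(d+2) = \frac{1}{2}\bigl((d+2) + \eta_\Theta(d+2)\bigr)$, this is equivalent to the uniform bound
$$|\om_\star| - 2|\om_\star|' \,\leq\, d+2-2j$$
over all maximal elements $\om_\star$ of $\Theta_{\langle d+2]}$.

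The preparatory observation is that the maximal elements of $\Theta_{\langle d]}$ are precisely the globally maximal elements of $\Theta$ which happen to lie in $\Theta_{\langle d]}$: any non-trivial $\succ$-step (a merge or an insert) strictly raises $|\om|'$ by one and only weakly raises $|\om|$ (by $0$ or $2$), so an element $\om'' \succ \om_\star$ in $\Theta$ already lies in $\Theta_{\langle d]}$ whenever $\om_\star$ does. Consequently the indexing set of $\eta_\Theta(d+2)$ grows monotonically with $d$ and exhausts the set $M_\star$ of globally maximal patterns of $\Theta$.

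With this in hand, I split $M_\star$ according to the value of $|\om_\star|'$. For $\om_\star \in M_\star$ with $|\om_\star|' \geq j$ the desired bound is automatic, since $|\om_\star| \leq d+2$ forces $|\om_\star| - 2|\om_\star|' \leq (d+2) - 2j$. The complement $\{\om_\star \in M_\star : |\om_\star|' < j\}$ is \emph{finite} by the profinite hypothesis, and for each such $\om_\star$ the integer $|\om_\star| - 2|\om_\star|'$ is fixed; choosing $d_0$ larger than the maximum of these finitely many values plus $2j-2$ then secures the uniform bound for every $d \geq d_0$. Composing the short-stabilization isomorphisms from Theorem \ref{th.main_stab} along $d, d+2, \dots, d'$ gives the claimed $(\e_{d, d'})_\ast$-isomorphism for all sufficiently large $d \leq d'$ with $d \equiv d' \mod 2$.

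The main obstacle is pinpointing why profiniteness is exactly the right hypothesis. Without it, $\Theta$ could contain arbitrarily many maximal patterns with bounded $|\om|'$ but unbounded $|\om|$; the family $(1, 1, \dots, 1)$ of every length, all with $|\om|' = 0$ and $|\om| - 2|\om|' = |\om|$, is the prototypical obstruction. In that scenario $\eta_\Theta(d)$ would grow linearly in $d$, the stable range $\xi_\Theta(d+2)$ would remain bounded, and no fixed $j$ could ever enter it; profiniteness is precisely what rules this out.
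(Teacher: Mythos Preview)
Your argument is correct and follows exactly the route the paper intends: the corollary is stated without proof (it is quoted from \cite{KSW2}) as an immediate consequence of the short stabilization Theorem~\ref{th.main_stab}, and your iteration argument---with the key observation that maximal elements of $\Theta_{\langle d]}$ are already globally maximal in $\Theta$ because the elementary operations only weakly increase $|\om|$---is precisely what is needed. Your dichotomy on whether $|\om_\star|' \geq j$ cleanly isolates the role of profiniteness, and the closing remark about the sequences $(1,\dots,1)$ correctly identifies the obstruction that profiniteness rules out.
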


As a result, 
we may talk about the {\sf stable homology} $H_j(\mathcal P_{\infty}^{\mathbf c\Theta}; \Z)$, the direct limit  \hfill \break $\lim_{d \to \infty}H_j( \mathcal P_{d}^{\mathbf c\Theta}; \Z)$.
\smallskip

Let us describe a few special cases of stabilization from \cite{KSW2}. 
%
For $k \geq 1$,  $q \in [0, d]$, and $q \equiv d \mod 2$, let us consider the closed poset
\begin{eqnarray}\label{Theta-wedge} 
\bfOm_{|\sim|^{'} \geq k}^{(q)} =_{\mathsf{def}}\, \big\{ \om \in \bfOm_{\langle d]} \text{ such that } |\omega|' \geq k \text{ and } |\om| \geq q\big\}.
\end{eqnarray}
Note that, for $\Theta = \bfOm_{|\sim|^{'} \geq k}^{(0)} =_{\mathsf{def}} \bfOm_{|\sim|' \geq k}$, the space $\bar{\cP}_d^\Theta$ is the entire $(d-k)$-skeleton of $\bar{\cP}_d$. 
 
 \begin{eqnarray}\label{eq.A-bouquet} 
 \text{ Let \;} A(d, k, q) =_{\mathsf{def}}\; \big|\chi\big((\Z\big[\bfOm_{|\sim|^{'} \geq k}^{(q)}\big], \d)\big)\big| = \big| \chi\big(\bar{\mathcal P}_{d}^{\bfOm_{|\sim|^{'} \geq k}^{(q)}}\big) -1 \big |,
 \end{eqnarray}
 the absolute value of the Euler number of the differential complex $\big(\Z\big[\bfOm_{|\sim|^{'} \geq k}^{(q)}\big], \d\big)$.

%

%

\begin{proposition}{\bf (\cite{KSW2})} 
  \label{prop.skeleton}
  Fix $k \in [1, d)$ and $q \ge 0$ such that $q \equiv d \mod 2$. Let $\Theta = \bfOm_{|\sim|^{'} \geq k}^{(q)}$. 
  
  Then the one-point compactification 
  $\bar \cP^{\Theta}_d$ has the homotopy type of 
	a bouquet of $(d-k)$-dimensional spheres. The number of spheres in the bouquet
	equals $A(d, k, q)$.\hfill $\diamondsuit$
\end{proposition}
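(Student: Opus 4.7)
The plan is to handle the statement in three stages: reduce the question to the combinatorics of the chain complex built in Section~2, prove that the homology is concentrated in the top grading $d-k$, and finally upgrade the homological description to the claimed bouquet homotopy type.

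First, I would exploit the natural stratification of $\bar\cP_d^\Theta$ by the locally closed strata $\{\mathring{\sR}_d^\omega\}_{\omega \in \Theta}$: the stratum labeled by $\omega$ is an open cell of dimension $d - |\omega|'$, so every cell has dimension at most $d - k$, with the top cells parameterized by $\omega \in \Theta$ satisfying $|\omega|' = k$, $|\omega| \geq q$, and $|\omega| \equiv d \pmod 2$. Combined with Theorem~\ref{thA} applied to $\Theta = \bfOm_{|\sim|' \geq k}^{(q)}$ and the short exact sequence (\ref{eq.quotient_complex}) (coupled with the fact that $\bar\cP_d \simeq S^d$), this identifies $\tilde H_*(\bar\cP_d^\Theta;\Z)$ with the homology of the subcomplex $(\Z[\Theta_{\langle d]}], \d_\sM + \d_\sI)$, graded so that the summand $\Z[\Theta \cap \bfOm_{|\sim|' = j}]$ sits in degree $d - j$. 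The complex is therefore supported in degrees $0, 1, \dots, d-k$.

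The crux is to show that this homology is concentrated in the top degree $d-k$ and is free abelian. I would construct an explicit chain-level contracting homotopy $h$ on the subcomplex in degrees $<d-k$, taking as candidate an \emph{erasure} operator — a signed partial inverse to $\sI$ that deletes an entry equal to $2$ from a multiplicity sequence. Since $\sI$ raises $|\omega|'$ by $1$, $h$ raises the homological grading by $1$, as needed. The identity $h\d + \d h = \mathrm{id}$ is then verified by expanding $\d = \d_\sM + \d_\sI$ and tracking the commutation of $h$ with both $\sM_i$ and $\sI_i$; the cross-terms coming from $\d_\sM$ must cancel combinatorially. Once this concentration is established, $H_{d-k}(\bar\cP_d^\Theta;\Z)$ is free abelian and its rank equals the absolute Euler characteristic of the complex, which is exactly $A(d,k,q)$ by (\ref{eq.A-bouquet}).

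The final stage upgrades the homological description to the claimed bouquet. Because the cells of $\bar\cP_d^\Theta$ of dimension strictly below $d-k$ correspond precisely to $\omega \in \Theta$ with $|\omega|' > k$, the same erasure argument, performed geometrically on divisors rather than on formal generators, collapses the $(d-k-1)$-skeleton onto the basepoint at infinity; this establishes $(d-k-1)$-connectivity. A simply connected CW complex (for $d-k \geq 2$) with free homology concentrated in a single top degree is homotopy equivalent to a wedge of spheres of that dimension, yielding the desired bouquet of $A(d,k,q)$ copies of $S^{d-k}$. The main obstacle is the algebraic verification of $h\d + \d h = \mathrm{id}$ on the \emph{truncated} subposet $\bfOm_{|\sim|' \geq k}^{(q)}$: the cutoffs $|\omega|' = k$ and $|\omega| = q$ prevent $h$ from being defined uniformly, so the required cancellations force a careful boundary-case analysis that must be arranged so that the stray terms all lie in the contractible top stratum or in strata already handled by induction.
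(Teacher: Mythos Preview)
This proposition is cited from \cite{KSW2}; the present paper does not prove it (the $\diamondsuit$ marks a quoted result), so there is no in-paper argument to compare your proposal against.

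Judged on its own terms, the three-stage plan is reasonable, but the contracting-homotopy step has a gap that is not the one you flag. Your erasure operator $h$ must be set to zero on any $\omega$ containing no entry equal to $2$, and such $\omega$'s occur throughout the interior of the complex, not merely at the cutoffs $|\omega|'=k$ or $|\omega|=q$. Concretely, take $d=4$, $k=1$, $q=0$ and $\omega=(4)$: then $h(4)=0$, while in the truncated complex $\d(4)=\d_\sM(4)+\d_\sI(4)=0$ because the insert images $(2,4)$ and $(4,2)$ have norm $6>d$ and hence vanish in $\Z[\mathbf\Om_{\langle 4]}]$. Thus $h\d(4)+\d h(4)=0\neq(4)$, even though $|\omega|'=3$ is far from $k=1$ and $|\omega|=4$ is far from $q=0$. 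The genuine obstruction is the \emph{upper} truncation $|\omega|\le d$, which annihilates $\d_\sI$ on top-norm elements and thereby destroys the homotopy identity; the lower cutoffs you name are comparatively harmless. What actually works---and is the likely content of the argument in \cite{KSW2}---is a more carefully engineered acyclic matching on the face poset (discrete Morse theory), pairing cells via a mixture of insert- and merge-type moves. Once such a matching is found, it simultaneously collapses $\bar\cP_d^\Theta$ to a wedge of top-dimensional cells and so delivers the homotopy type directly, rendering your Step~3 unnecessary. As written, the ``geometric erasure'' you invoke in Step~3 is precisely this missing matching, so appealing to it there does not independently establish simple connectivity.
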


%




\begin{proposition}{\bf (\cite{KSW1})} 
  \label{prop.fundamental_groups}
  Let $\Theta \subseteq \paOm{\langle d]}{\geq 2}$ 
  be a closed poset. For $d' \geq d+2$ such that $d'  \equiv d \mod 2$, 
  let $\hat\Theta_{\{d'\}}$ be the 
  smallest closed poset in $\bfOm_{\langle {d'}]}$ containing
  $\Theta$. 
  
  Then for $d' \geq d+2$, we have an isomorphism 
	$\pi_1\big(\cP_{d'}^{\mathbf c \hat \Theta_{\{d'\}}}\big) \cong \pi_1\big(\cP_{d+2}^{\mathbf c\Theta_{\{d+2\}}}\big)$ of the fundamental groups. \hfill $\diamondsuit$
\end{proposition}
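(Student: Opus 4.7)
\medskip
\noindent\textbf{Proof plan.}

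The plan exploits the hypothesis $\Theta \subseteq \paOm{\langle d]}{\geq 2}$, which forces the bad locus $\cP_{d'}^{\hat\Theta_{\{d'\}}} \subset \cP_{d'}$ to have real codimension at least $2$. My strategy is to present both fundamental groups in terms of the codimension-$2$ and codimension-$3$ strata of the bad locus, and then to show by a purely combinatorial argument that these data already stabilize when passing from degree $d+2$ to $d'$.

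First I would check that the stabilization map $\e_{d+2,d'}(P) = (u^2+1)^{(d'-d-2)/2}\cdot P$ descends to an inclusion $\cP_{d+2}^{\mathbf c\Theta_{\{d+2\}}} \hookrightarrow \cP_{d'}^{\mathbf c\hat\Theta_{\{d'\}}}$: multiplication by $u^2+1$ does not alter the real divisor, and a pattern $\om$ of norm $\leq d+2$ lies in $\hat\Theta_{\{d'\}}$ iff it already lies in $\hat\Theta_{\{d+2\}}$, since every $\succ$-ancestor $\om^\ast \in \Theta$ of such an $\om$ satisfies $|\om^\ast|\leq |\om|\leq d+2$.

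The key combinatorial observation is that both elementary operations strictly raise $|\om|'$: $\sM_i$ keeps $|\om|$ fixed while raising $|\om|'$ by $1$, and $\sI_i$ raises $|\om|$ by $2$ and $|\om|'$ by $1$. Consequently the elements of $\hat\Theta_{\{d'\}}$ with $|\om|'=2$ coincide with $\{\om\in\Theta : |\om|'=2\}$ (they cannot arise from closure), and the elements with $|\om|'=3$ either lie in $\Theta$ or are a single-step output of a $|\om|'=2$ element of $\Theta$, hence have norm $\leq d+2$ and so already sit inside $\hat\Theta_{\{d+2\}}$. Therefore the codimension-$\leq 3$ part of the $\bfOm$-stratification of the bad locus is identical, up to a trivial $(d'-d-2)$-dimensional normal factor, in degrees $d+2$ and $d'$.

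Finally I would invoke a real Zariski--van Kampen type argument to describe $\pi_1(\cP_{d'}^{\mathbf c\hat\Theta_{\{d'\}}})$ with generators given by meridians around codimension-$2$ strata and relations arising from the codimension-$3$ strata in their closures. The combinatorial step makes this presentation identical for $d+2$ and $d'$, and $(\e_{d+2,d'})_\ast$ realizes the resulting isomorphism. The main obstacle is making this presentation rigorous in the real semi-algebraic setting: the Whitney regularity of the $\bfOm$-stratification established in \cite{KSW1}, together with the explicit local model of each $\mathring{\sR}^\om_{d'}$ as a product of real-root positions with the remaining complex-root-pair parameters, should reduce the matter to a generic $3$-dimensional transverse slice, where the standard $2$-complex description of $\pi_1$ applies and the classifying data depend only on the enumerated codimension-$2$ and codimension-$3$ strata.
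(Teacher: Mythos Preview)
The paper does not supply a proof of this proposition. It appears in Section~2, which the author explicitly describes as a recap of results from \cite{KSW1} and \cite{KSW2} ``for our reader convenience''; Proposition~\ref{prop.fundamental_groups} is stated with the attribution {\bf (\cite{KSW1})} and closed with the $\diamondsuit$ symbol, with no argument given. So there is no in-paper proof to compare your proposal against.

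That said, a brief assessment of your outline on its own merits: your combinatorial core is correct and is the right engine for any such proof. Both $\sM_i$ and $\sI_i$ raise $|\om|'$ by exactly $1$, so the closure process can only add elements of strictly higher $|\om|'$; hence the $|\om|'=2$ and $|\om|'=3$ layers of $\hat\Theta_{\{d'\}}$ are already present in $\hat\Theta_{\{d+2\}}$, exactly as you argue. The weak point is the one you yourself flag: turning ``generators from codimension-$2$ strata, relations from codimension-$3$ strata'' into a rigorous $\pi_1$ computation for a real semialgebraic complement is not automatic. A Zariski--van Kampen statement of the needed shape does hold here, but it relies on the specific cellular/Whitney structure of the $\bfOm$-stratification developed in \cite{KSW1}; you would need to invoke that structure explicitly (or reproduce it) rather than gesture at a generic transversal slice. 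Absent access to \cite{KSW1}, your sketch is a plausible reconstruction of the likely argument, with the analytic step left as the genuine gap.
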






 
\section{Submersions \& embeddings of manifolds whose boundary has constrained tangency patterns to the product $1$-foliations}

This section forms a bridge between the results from \cite{K9} and our main results from Section 4. The reader may choose to surf Section 3 or to proceed directly to Section 4.  Since all the results of this section are instant derivations of the similar results from \cite{K9}, we provide just an outline of their validations. Although we will not use directly the results of Section 3  in Section 4, this section may induce the ``right mindset" for the reader. \smallskip

Let $Y$ be a smooth compact $n$-manifold. 
Having in mind applications to traversing vector flows, we move away from immersions and embeddings $\b: M \to \R \times Y$ of $n$-manifolds $M$, the topic of \cite{K9}, to {\sf submersions} and {\sf regular embeddings} $\a: X \to \R \times Y$ of compact smooth $(n+1)$-manifolds $X$ with boundary into the product  $\R \times Y$.  When $\dim(X) = \dim(Y)+1$, the submersions and immersions are the same notion. Moreover, the restriction of a submersion $\a$ to $\d X$ is an immersion. 
Of cause, if $\a:  X \to \R \times Y$ is an embedding, so is $\a^\d := \a |: \d X \to \R \times Y$.

Therefore many constructions and notions from \cite{K9}, with the help of the correspondence $\a \leadsto \a^\d$, apply instantly to submersions  $\a: X \to \R \times Y$ such that $\dim X = \dim Y +1$.
 
\begin{remark}
\emph{From the viewpoint of this paper, the main difference between immersions $\b: M \to \R \times Y$ and submersions $\a: X \to \R \times Y$, where $\dim X = \dim M + 1$, is that not any $\b$ is a boundary $\a^\d$ of some $\a$. For example, the figure $\infty$ in the plane does not bound a submersion of a $2$-manifold. See \cite{Pa} for the comprehensive theory of possible extensions of a given immersion $\b$ to a submersion $\a$.} 
\hfill $\diamondsuit$
\end{remark}

\begin{example}\label{ex.submersions}
\emph{The following simple construction provides \emph{models of submersions} that animate our treatment. Let $W$ be a codimension zero compact submanifold of a given manifold $V$. Consider a covering map $\pi: \tilde W \to W$ with a finite fiber. Let $X \subset \tilde W$ be a compact codimension zero submanifold. It is possible to isotop the imbedding $X \subset \tilde W$ so that $\pi: \d X \to W$ will be an immersion with all the multiple crossings of $\pi(\d X)$ being in general position. Of course, each crossing has the multiplicity that does not exceed the cardinality of the $\pi$-fiber. Then $\pi: X \to V$ is the model example of a submersion to keep in mind.} \hfill $\diamondsuit$
\end{example}



Let us introduce the central to this paper notion of {\sf quasitopy for submersions} $\a: X \to \R \times Y$, an analogue of Definition 3.7 from \cite{K9}. 
We fix a natural number $d$ and consider a closed sub-poset $\Theta \subset \mathbf\Om$  such that $(\emptyset) \notin \Theta$ (the, so called, $\Lambda$-{\sf condition} (3.10) from \cite{K9}).  
For topological reasons, we will consider only the case $d \equiv 0 \mod 2$.
\smallskip

Let $\mathcal L$ be the $1$-foliation of $\R \times Y$ by the fibers of the obvious projection $\R \times Y \to Y$, and $\mathcal L^\bullet$ be the $1$-foliation of $\R \times Y \times [0, 1]$ by the fibers of $\R \times Y \times [0, 1] \to Y \times [0, 1]$.\smallskip

Let $X$ be a compact 
smooth $(n+1)$-dimensional manifold with boundary. Consider a smooth map  $\a: X \to \R \times Y$ such that:
\begin{eqnarray}\label{multiplicity_condition}
\end{eqnarray}
\begin{itemize}
\item $\a$ is a {\sf submersion}, 
\smallskip

\item for each $y \in Y$, the total multiplicity $m_\b(y)$ of $\a(\d X)$ with respect to the foliation $\mathcal L$ (see \cite{K9}, formula (3.4)) is less than or equal to $d$ and $m_\b(y) \equiv d \mod 2$,\smallskip

 \item for each $y \in Y$, the combinatorial tangency pattern $\omega^{\a}(y) \in \mathbf\Om_{\langle d]}$ of $\a(\d X)$ with respect to $\mathcal L$ does not belong to $\Theta$,\smallskip
 
 \item for each $y \in \d Y$,   $\omega^\a(y) = (\emptyset)$.
\end{itemize}
\smallskip

Note that the normal bundle $\nu^\a$ to $\a(\d X)$ in $\R \times Y$ is trivial. 
\smallskip 

Let $X_0, X_1$ be two compact smooth (oriented) $(n+1)$-dimensional manifolds with boundary. We consider a compact smooth (oriented) $(n+2)$-manifold $W$ with conners $\d X_0 \coprod \d X_1$ such that $\d W = (X_0 \coprod X_1) \bigcup_{\{\d X_0 \coprod \d X_1\}} \delta W$, where $\delta W$ is a smooth (oriented) cobordism between $\d X_0$ and $\d X_1$. Let $Z =_{\mathsf{def}} \R \times Y$ and $Z^\d =_{\mathsf{def}} \R \times \d Y$. 

Let  $A: W \to \R \times Z$, where $\dim(W) = \dim(Z) +1$, be a {\sf submersion}. In particular, $A: \delta W \to \R \times Z, \; A|_{\d X_0 } \to \R \times (Y \times \{0\})$, and $A|_{\d X_1 } \to \R \times (Y \times \{1\})$ are immersions.  
\smallskip

The next two definitions lay down the foundation for notions of quasitopy 
of traversing vector fields, the main subject of Section 4 (see Fig. 2). 

\begin{definition}\label{quasi_isotopy_of_sub}
Let us fix natural numbers $d' \geq d$, $d' \equiv d \equiv 0 \mod 2$.  Consider closed subposets $\Theta' \subset \Theta \subset  \mathbf\Om$ such that $(\emptyset) \notin \Theta$. \smallskip

We say that a two submersions $\a_0: X_0 \to \R \times Y$ and $\a_1: X_1 \to \R \times Y$ are $(d, d'; \mathbf c\Theta, \mathbf c\Theta')$-{\sf quasitopic}, if there exists a compact smooth 
$(n+2)$-manifold $W$ as above and a smooth submersion 
$A: W \to \R \times Z$ so that:
\begin{itemize} 
\item $A|_{X_0} = \a_0$ and $A|_{X_1} = \a_1$; 
\smallskip
  
\item for each $z \in Z$, the total multiplicity $m_A(z)$ of $A(\delta W)$ with respect to the fiber $\mathcal L^\bullet_z$ is such that $m_A(z) \leq d'$,  $m_A(z) \equiv d' \mod 2$, and the combinatorial tangency pattern $\omega^A(z)$ of $A(\delta W)$ with respect to $\mathcal L^\bullet_z$ belongs to $\mathbf c\Theta'$; 
\smallskip
  
\item for each $z \in Y \times (\{0\} \cup \{1\})$, the total multiplicity $m_A(z)$ of $A(\delta W)$ with respect to the fiber $\mathcal L_z$ is such that  $m_A(z) \leq d$, $m_A(z) \equiv d \mod 2$, and the combinatorial tangency pattern $\omega^A(z)$ of $A(\delta W)$ with respect to $\mathcal L$ belongs to $\mathbf c\Theta$. 
\smallskip 

\item for each $z \in Z^\d$, $\omega^A(z) = (\emptyset)$. 
\end{itemize}

We denote by $\underline{\overline{\mathsf{QT}}}^{\mathsf{\, sub/emb}}_{\,d, d'}(Y; \mathbf{c}\Theta; \mathbf{c}\Theta')$ the set of quasitopy classes of such submersions/ embeddings $\a: X \to  \R \times Y$. \hfill $\diamondsuit$
\end{definition}
It is easy to check that the quasitopy of submersions is an equivalence relation. Recall that in \cite{K9}, Definition 3.7, we have introduced a similar notion of quasitopy for immersions/embeddings $\b: (M, \d M) \to (\R \times Y, \R \times \d Y)$. There, we used the notation $$\mathsf{QT}^{\mathsf{imm/emb}}_{d, d'}(Y;  \mathbf{c}\Theta; \mathbf{c}\Theta') =_{\mathsf{def}} \mathcal{QT}^{\mathsf{imm/emb}}_{d, d'}(Y, \d Y; \hfill\break \mathbf{c}\Theta, (\emptyset); \mathbf{c}\Theta')$$ for the set of equivalence classes of immersions/embeddings $\b$ under the quasitopy relation.

As for immersions $\b: M \to \R \times Y$, for any choice of connected components $\kappa_1 \in \pi_0(\d Y_1), \kappa_2 \in \pi_0(\d Y_2)$, the connected sum operation (see \cite{K9}, formula (3.15))
\begin{eqnarray}\label{eq_cup_sub}
\uplus:\;\underline{\overline{\mathsf{QT}}}^{\mathsf{\, sub/emb}}_{\,d, d'}(Y_1; \mathbf{c}\Theta; \mathbf{c}\Theta')  \times \underline{\overline{\mathsf{QT}}}^{\mathsf{\, sub/emb}}_{\,d, d'}(Y_2; \mathbf{c}\Theta; \mathbf{c}\Theta') \to \nonumber \\
\to\underline{\overline{\mathsf{QT}}}^{\mathsf{\, sub/emb}}_{\,d, d'}(Y_1\#_\d Y_2; \mathbf{c}\Theta; \mathbf{c}\Theta'),
\end{eqnarray}
is well-defined for the quasitopies of submersions. 

It converts the set $\underline{\overline{\mathsf{QT}}}^{\mathsf{\, sub/emb}}_{\,d, d'}(D^{n}; \mathbf{c}\Theta; \mathbf{c}\Theta')$ into a {\sf group} 
$\underline{\overline{\mathsf{H}}}^{\mathsf{\, sub/emb}}_{\,d, d'}(n; \mathbf{c}\Theta; \mathbf{c}\Theta')$ (abelian for $n > 1$).  This group acts, via the connected sum operation $\uplus$, on the set $\underline{\overline{\mathsf{QT}}}^{\mathsf{\, sub/emb}}_{\,d, d'}(Y; \mathbf{c}\Theta; \mathbf{c}\Theta')$, provided that a connected component of $\d Y$ is chosen. To get a better insight, compare this claim with Proposition 3.2 from \cite{K9}.
\smallskip




For two pairs $X_1 \supset A_1$ and $X_2 \supset A_2$ of topological spaces, we denote by $[(X_1,A_1),\hfill\break (X_2, A_2)]$ the set of homotopy classes of continuous maps $g: X_1 \to X_2$,  where $g(A_1) \subset A_2$.\smallskip

\begin{definition}\label{def.triples}
Given three pairs of spaces $X_1 \supset A_1$,  $X_2 \supset A_2$, $X_3 \supset A_3$, and a fixed continuous map $\e: (X_2, A_2) \to (X_3, A_3)$, we denote by 
\begin{eqnarray}\label{homotopy_of_tripples}
[[(X_1, A_1),\, \e: (X_2, A_2) \to (X_3, A_3)]]
\end{eqnarray}
the set of homotopy classes $[g]$ of continuous maps $g: (X_1, A_1) \to (X_2, A_2)$, modulo the following equivalence relation: by definition, $[g_0] \sim [g_1]$, where $g_0: (X_1, A_1) \to (X_2, A_2)$ and $g_1: (X_1, A_1) \to (X_2, A_2)$ are continuous maps, if the compositions $\e\circ g_0$ and $\e \circ g_1$ are homotopic as maps from $(X_1, A_1) $ to $(X_3, A_3)$. 
\hfill  $\diamondsuit$
\end{definition}

Following the proof of Proposition 3.3 and Theorem 3.2 from \cite{K9}, we get Theorem \ref{th.section}, their analogue for submersions. It is crucial that here $d \equiv 0 \mod 2$, which implies that any regular embedding $\b: M \to \R \times Y$, such that all the multiplicities $\{m_\b(y)\}_{y \in Y}$ are even, bounds a regular embedding $\a: X \to \R \times Y$, where $\d X = M$ and $\a|_{\d X} = \b$. \smallskip

For an $n$-dimensional $Y$, the next lemma reduces the computation of quasitopies \hfill \break $\underline{\overline{\mathsf{QT}}}^{\mathsf{\, emb}}_{\,d, d'}(Y; \mathbf{c}\Theta; \mathbf{c}\Theta')$, based on regular embeddings $\a: X \to \R \times Y$ of $(n+1)$-dimensional manifolds $X$, to the computation of quasitopies $\mathsf{QT}^{\mathsf{emb}}_{d, d'}(Y; \mathbf{c}\Theta; \mathbf{c}\Theta')$, based on the regular embeddings $\b: M \to  \R \times Y$ of $n$-dimensional manifolds $M$.

\begin{lemma}\label{lem.bijection D} For closed posets $\Theta' \subset \Theta \subset \mathbf\Om$ such that $(\emptyset) \notin \Theta$ and $d \leq d'$, $d \equiv d' \equiv 0 \mod 2$, the  map $$\Delta:\,\underline{\overline{\mathsf{QT}}}^{\mathsf{\, emb}}_{d, d'}(Y; \mathbf{c}\Theta; \mathbf{c}\Theta') \to \mathsf{QT}^{\mathsf{emb}}_{d, d'}(Y; \mathbf{c}\Theta; \mathbf{c}\Theta')$$ that takes an embedding $\a: X \to \R \times Y$ to the embedding $\a^\d: \d X \to \R \times Y$ is a bijection.

As a result, the obvious map $\mathcal A^\bullet:\, \underline{\overline{\mathsf{QT}}}^{\mathsf{\, emb}}_{d, d'}(Y; \mathbf{c}\Theta; \mathbf{c}\Theta') \to \underline{\overline{\mathsf{QT}}}^{\mathsf{\, imm}}_{d, d'}(Y; \mathbf{c}\Theta; \mathbf{c}\Theta')$
is injective; in other words, if two embedding are quasitopic via a submersion, they are quasitopic via an embedding.  
\end{lemma}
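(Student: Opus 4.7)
My plan is to produce a two-sided inverse to $\Delta$ up to quasitopy by means of a canonical fiberwise polynomial sub-level filling, modelled directly on the pair $(\mathcal E_d, \d \mathcal E_d)$ of (2.3). This mirrors Proposition 3.3 and Theorem 3.2 of \cite{K9}, shifted up one dimension. First I would establish surjectivity of $\Delta$: given a regular embedding $\b: M \to \R \times Y$ meeting all the hypotheses (with $m_\b(y) \le d$, $m_\b(y) \equiv d \equiv 0 \pmod 2$, $\omega^\b(y) \in \mathbf c\Theta$, and $\omega^\b(y) = (\emptyset)$ on $\d Y$), let $P_{\b, y}(u)$ denote the monic polynomial of degree $m_\b(y)$ whose real divisor is $\b(M) \cap (\R \times \{y\})$, and set
$$ X := \{(u, y) \in \R \times Y : P_{\b, y}(u) \le 0\}. $$
The evenness of $m_\b(y)$ forces $P_{\b, y} \to +\infty$ at $\pm\infty$, so $X$ is compact with fibers literally modelled on $\mathcal E_d$; the transversality encoded in $\omega^\b(y) \in \mathbf c\Theta$ makes $X$ a smooth $(n+1)$-manifold with $\d X = \b(M)$, and the inclusion $\a: X \hookrightarrow \R \times Y$ satisfies $\Delta(\a) = \b$.

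Next I would tackle injectivity: suppose $\a_0, \a_1$ are regular embeddings whose boundary restrictions $\b_i := \a_i^\d$ are quasitopic in $\mathsf{QT}^{\mathsf{emb}}_{d, d'}(Y; \mathbf c\Theta; \mathbf c\Theta')$ via an embedding $B: N \to \R \times Y \times [0, 1]$. Running the same sub-level construction on $B$ over $Y \times [0, 1]$ produces a compact smooth $(n+2)$-submanifold $W' \subset \R \times Y \times [0, 1]$ whose vertical boundary is $B(N)$ and whose slices at $t = 0, 1$ are the canonical fillings $X_0', X_1'$ of $\b_0, \b_1$. The two subsets $\a_i(X_i)$ and $X_i'$ of $\R \times Y$ share the boundary $\b_i(M_i)$ and differ by a disjoint union of components of $(\R \times Y) \setminus \b_i(M_i)$; swapping each such component $C$ is achieved by a short embedded flip cobordism built by attaching or removing $C \times [\e/2, \e]$ from $\a_i(X_i) \times [0, \e]$ and rounding corners along $\d C \subset \b_i(M_i)$. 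The side boundary of each flip cobordism is a constant cylinder over $\b_i$, so it automatically inherits the required multiplicity and tangency bounds. Concatenating such flips at $t = 0, 1$ with $W'$ yields an embedded cobordism realising $\a_0 \sim \a_1$ in $\underline{\overline{\mathsf{QT}}}^{\mathsf{\, emb}}_{d, d'}(Y; \mathbf c\Theta; \mathbf c\Theta')$.

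The injectivity of $\mathcal A^\bullet$ is then a formal consequence. The square whose vertical arrows are both instances of $\Delta$ (on $(n+1)$-embeddings, respectively $(n+1)$-immersions) and whose horizontal arrows are the forgetful maps from embedding to immersion quasitopies commutes, both routes sending $\a$ to $\a^\d$; the bottom arrow is injective by the closed-$n$-manifold analogue proved in \cite{K9}, and the left arrow is the bijection just established, so the top arrow is injective. The hardest step throughout is the flip construction: a component of the complement may be swapped in or out of the filling only while the fiberwise multiplicity parity is preserved, and this is precisely where the hypothesis $d \equiv 0 \pmod 2$ does its work, making the polynomials $P_{\b, y}$ positive at infinity and keeping every flip admissible inside the cobordism.
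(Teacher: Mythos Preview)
Your approach is essentially the paper's: construct the filling $X$ of $\b(M)$ as the fiberwise sub-level set $\{P_{\b,y}(u)\le 0\}$, and for injectivity fill the embedded cobordism $B(N)$ the same way to obtain $W$. The diagram chase for $\mathcal A^\bullet$ also matches the paper's argument verbatim.

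However, your ``flip cobordism'' detour is unnecessary, and this is worth understanding. The even-degree hypothesis forces the compact codimension-$0$ filling of a closed hypersurface $M\subset\R\times Y$ to be \emph{unique}: the interior of any such filling is a union of connected components of $(\R\times Y)\setminus M$; the in/out $\Z/2$-coloring is forced to alternate across $M$ (since $M=\d X$) and is pinned down by declaring the unbounded components ``out''. Hence $\a_i(X_i)=X_i'$ automatically, and your filling $W'$ of $B(N)$ already has the correct slices at $t=0,1$. The paper phrases this more directly by saying that the closed $(n+1)$-manifold $X_0\cup_{M_0} N\cup_{M_1} X_1$ bounds in $\R\times Z$ (again by $d'\equiv 0\bmod 2$), which is the same filling viewed globally.

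Your flip construction, as sketched, is also shakier than you suggest: removing $C\times[\e/2,\e]$ from $\a_i(X_i)\times[0,\e]$ does \emph{not} produce a side boundary that is a constant cylinder over $\b_i$---there is an extra piece $\overline C\times\{\e/2\}$ together with a wall along $\d C\times[\e/2,\e]$ that would have to be absorbed, and the intermediate slices need not even be manifolds with boundary $M_i$. Fortunately, none of this matters once you observe the uniqueness above.
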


\begin{proof} The main step is contained in the proof of Lemma 3.6 from \cite{K9}. Let us describe its flavor. Since, for $d \equiv 0 \mod 2$ and a closed  $n$-manifold $M$, any $(\d\mathcal E_d)$-{\sf regular} (see Definition 3.4 in \cite{K9}) embedding $\b: M \subset \R \times \text{int}(Y)$ bounds a (orientable when $Y$ is orientable) $(n+1)$-manifold $\a: X_\b \subset \R \times Y$, the map $\Delta$ is onto. Evidently, the combinatorial tangency types to $\mathcal L$ are determined by $\b(M)$. Thus, every embedding $\a \in \underline{\overline{\mathsf{QT}}}^{\mathsf{\, emb}}_{d, d'}(Y; \mathbf{c}\Theta; \mathbf{c}\Theta')$ produces an element $\a^\d$ in $\mathsf{QT}^{\mathsf{\, emb}}_{d, d'}(Y; \mathbf{c}\Theta; \mathbf{c}\Theta')$.  By the same token, if a $(\d\mathcal E_d)$-regular embedding $\b: M \subset \R \times \text{int}(Y)$ bounds a $(\d\mathcal E_{d'})$-regular embedding $B: N \subset \R \times Z$ (where $Z =_{\mathsf{def}} Y \times [0, 1]$), whose tangency to $\mathcal L^\bullet$ patterns belong to $\mathbf{c}\Theta'$, then $X_\b \cup_{\d M} N$ bounds (an orientable when $Y$ is orientable) $(n+2)$-manifold $W \subset \R \times Z$, provided $d' \equiv 0 \mod 2$. Here $N \subset \R\times Z$ is a compact $(n+1)$-manifold such that $\d N = M$ and $B|_{M} = \b$. Thus the map $\Delta$ is injective. By the previous argument, it is bijective.   

By \cite{K9}, Proposition 3.5, the map $\mathcal A: \mathsf{QT}^{\mathsf{\, emb}}_{d, d'}(Y; \mathbf{c}\Theta; \mathbf{c}\Theta') \to \mathsf{QT}^{\mathsf{\, imm}}_{d, d'}(Y; \mathbf{c}\Theta; \mathbf{c}\Theta')$ (see formula (3.32) in \cite{K9}) is injective. By the argument above, $\Delta$ is injective. Chasing the obvious square diagram, formed by the sources and targets of the maps $\mathcal A, \mathcal A^\bullet$, we conclude that $\mathcal A^\bullet$ is injective as well. 
\end{proof}

Combining Lemma \ref{lem.bijection D} with Theorem 3.2 from \cite{K9}, we get the following results.

\begin{theorem}\label{th.section} We fix even natural numbers $d' \geq d$, $d' \equiv d  \mod 2$,  and  closed subposets $\Theta' \subset \Theta \subset \mathbf\Om_{\langle d]}$ such that  $(\emptyset) \notin \Theta$. Let $Y$ be a smooth compact $n$-manifold.

Then any submersion $\a: X \to \R \times Y$ as in (\ref{multiplicity_condition}) gives rise\footnote{not in a canonical fashion} to a map $\Psi(\a): (Y, \d Y) \to (\cP_d^{\mathbf c\Theta}, \cP_d^{(\emptyset)})$. Moreover, $(d, d'; \mathbf c\Theta, \mathbf c\Theta')$-quasitopic submersions/embeddings $\a_0: X_0 \to \R \times Y$ and $\a_1: X_1 \to \R \times Y$ produce homotopic maps $\Psi(\a_0)$ and $\Psi(\a_1)$. 

In this way, we get a map 
$$\Psi_{d, d'}^{\mathsf{\, sub/emb}}: \; \underline{\overline{\mathsf{QT}}}^{\mathsf{\, sub/emb}}_{\,d, d'}(Y; \mathbf{c}\Theta; \mathbf{c}\Theta') \to \big[\big[(Y, \d Y),\; \e_{d, d'}: (\cP_d^{\mathbf c\Theta}, \cP_d^{(\emptyset)}) \to (\cP_{d'}^{\mathbf c\Theta'}, \cP_{d'}^{(\emptyset)}) \big]\big],  
$$

Conversely, the homotopy class of any continuous map $G: (Y, \d Y) \to  (\cP_d^{\mathbf c\Theta}, \cP_d^{(\emptyset)})$ 
 is realized by a smooth regular \emph{embedding} $\a: X \hookrightarrow \R \times Y$ which satisfies (\ref{multiplicity_condition}); that is, $G = \Psi_{d, d'}^{\mathsf{\, emb}}(\a)$. 
 \smallskip
 
Moreover, $\Psi_{d, d'}^{\mathsf{\, emb}}$ is a bijection,  and $\Psi_{d, d'}^{\mathsf{\, sub}}$ is a surjection, admitting a right inverse.
 \hfill $\diamondsuit$
\end{theorem}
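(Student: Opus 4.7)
The plan is to bootstrap from the already-established immersion/embedding theory in \cite{K9} via the boundary correspondence $\a \leadsto \a^\d$ that underlies Lemma \ref{lem.bijection D}. Since $\a: X \to \R \times Y$ is a submersion between $(n+1)$-manifolds, the restriction $\a^\d: \d X \to \R \times Y$ is automatically an immersion (an embedding when $\a$ is so), and its combinatorial tangency data to the fibers of $\mathcal L$ coincides with that of $\a$ itself. Hence every construction and invariance needed for $\a$ will be carried out on $\a^\d$ by invoking the corresponding immersion-level result from \cite{K9}.

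First I would set $\Psi(\a) := \Psi(\a^\d)$, where the right-hand side is the classifying map built in Proposition 3.3 of \cite{K9}. Concretely, for each $y \in Y$ the intersection $\a(\d X) \cap (\R \times \{y\})$, read with multiplicities coming from the local tangency orders to $\mathcal L_y$, yields a monic real polynomial of degree $m_\a(y) \leq d$; this polynomial is stabilized to exact degree $d$ by multiplication with $(u^2+1)^{(d-m_\a(y))/2}$, a step whose validity rests on the parity hypothesis $m_\a(y) \equiv d \bmod 2$ built into (\ref{multiplicity_condition}). The constraint $\omega^\a(y) \in \mathbf c\Theta$ places $\Psi(\a)(y)$ in $\cP_d^{\mathbf c\Theta}$, while $\omega^\a(y) = (\emptyset)$ on $\d Y$ sends the boundary into $\cP_d^{(\emptyset)}$.

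Next I would verify quasitopy invariance: a $(d,d';\mathbf c\Theta, \mathbf c\Theta')$-submersion $A: W \to \R \times Y \times [0,1]$ joining $\a_0$ and $\a_1$ has, as its boundary trace $A^\d := A|_{\delta W}$, an immersion for which the same polynomial recipe, applied fiberwise along $\mathcal L^\bullet$, yields a homotopy of pairs between $\e_{d,d'} \circ \Psi(\a_0)$ and $\e_{d,d'} \circ \Psi(\a_1)$---precisely the equivalence relation inside the bracket notation of Definition \ref{def.triples}. This makes $\Psi_{d,d'}^{\mathsf{sub/emb}}$ well-defined. For bijectivity on the embedding side I would then compose the bijection $\Delta$ of Lemma \ref{lem.bijection D} with the immersion-level classifying bijection from Theorem 3.2 of \cite{K9}; the outcome is precisely $\Psi_{d,d'}^{\mathsf{emb}}$, which is therefore a bijection. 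For the split surjection in the submersion case, given any $G$ in the target, Theorem 3.2 of \cite{K9} realizes $G$ by a regular embedding $\b: M \hookrightarrow \R \times Y$ of an $n$-manifold; since $d \equiv 0 \bmod 2$, the bounding procedure inside Lemma \ref{lem.bijection D} extends $\b$ to a regular embedding $\a: X \hookrightarrow \R \times Y$ with $\a^\d = \b$, so composing $(\Psi_{d,d'}^{\mathsf{emb}})^{-1}$ with the forgetful inclusion $\underline{\overline{\mathsf{QT}}}^{\mathsf{\,emb}}_{d,d'}(Y;\mathbf c\Theta;\mathbf c\Theta') \hookrightarrow \underline{\overline{\mathsf{QT}}}^{\mathsf{\,sub}}_{d,d'}(Y;\mathbf c\Theta;\mathbf c\Theta')$ delivers the required right inverse to $\Psi_{d,d'}^{\mathsf{sub}}$.

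The main obstacle I anticipate is making the bounding step $\b \leadsto \a$ respect both the degree constraint and the combinatorial constraint at the level of cobordisms. Existence of a topological bounding manifold follows from $d \equiv 0 \bmod 2$, but preserving $(\d \mathcal E_d)$-regularity and keeping tangencies inside $\mathbf c\Theta$ along the whole bounding hypersurface requires the general-position arguments already encapsulated in the proof of Lemma \ref{lem.bijection D} (modelled on Example \ref{ex.submersions}); invoking that lemma cleanly is the single substantive technical input, and the rest of the proof amounts to chasing the commutative square formed by $\Delta$, the forgetful map on embeddings/submersions, and the two classifying maps.
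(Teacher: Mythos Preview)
Your proposal is correct and follows essentially the same route as the paper: the paper's own argument is the one-line ``Combining Lemma \ref{lem.bijection D} with Theorem 3.2 from \cite{K9}'', preceded by the remark that $d\equiv 0\bmod 2$ is what makes every regular embedding $\b: M\hookrightarrow \R\times Y$ bound a regular embedding $\a: X\hookrightarrow \R\times Y$. Your decomposition---define $\Psi(\a):=\Psi(\a^\d)$ via \cite{K9}, then use $\Delta$ from Lemma \ref{lem.bijection D} together with the bijection of Theorem 3.2 in \cite{K9}---is exactly this.

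One small comment on your closing paragraph: the obstacle you anticipate is not a genuine one. The conditions in (\ref{multiplicity_condition}) are imposed on $\a(\d X)$ only, so once $\b$ satisfies the $\mathbf c\Theta$-constraint and the degree/parity constraint, \emph{any} bounding $\a$ with $\a^\d=\b$ automatically satisfies (\ref{multiplicity_condition}); there is nothing further to arrange along the interior of $X$. The same remark applies to the cobordism $A$: the quasitopy constraints in Definition \ref{quasi_isotopy_of_sub} refer to $A(\delta W)$, which is exactly the immersion-level datum handled by \cite{K9}.
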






\section{Convex envelops of traversing flows, their quasitopies, \&  characteristic classes}

\subsection{Traversing, generic, and convex vector fields.  Morse stratifications. Spaces of convex traversing  vector fields}

\begin{definition}\label{def.traversing} A  vector field $v \neq 0$ on a compact smooth manifold $X$ is called {\sf traversing}, if each $v$-trajectory is homeomorphic either to a closed interval, or to a point. 
\hfill $\diamondsuit$
\end{definition}

Let $v$ be a  traversing and {\sf boundary generic} (see \cite{K1}, \cite{K2}, and Definition \ref{def.boundary_generic} below) vector field on a compact smooth $(n+1)$-manifold $X$ with boundary.  As we will see soon, every trajectory $\g$ of such a vector field $v$ generates its {\sf  tangency divisor} $D_\g$, an ordered sequence of points in $\g$, together with their multiplicities (natural numbers).\smallskip


We try to ``go around" the fundamental discontinuity of   the map $x \to \g_x \to D_{\g_x}$,  where $\g_x$ stands for the $v$-trajectory through $x \in X$. This requires ``to envelop" the pair $(X, v)$ in a {\sf convex envelop/pseudo-envelop} $(\hat X, \hat v)$ (see Definition \ref{def.convex_envelop}). 
The  convex pseudo-envelops, when available,  will greatly simplify our analysis of traversing flows. In the spirit of Section 3, we will apply our results about immersions and submersions (against the background of product $1$-foliations) from \cite{K9}  to the new environment of convex envelops of traversing flows.\smallskip

Following \cite{Mo}, for a generic vector field $v$ on a smooth compact $(n+1)$-dimensional manifold $X$, such that $v \neq 0$ along $\d X$, let us describe an important {\sf Morse stratification} $\{\d_j^\pm X(v)\}_{j \in [1, \dim X]}$ of the boundary $\d X$. The stratum $\d_jX =_{\mathsf{def}} \d_jX(v)$ has the following description (see \cite{K1}) in terms of an auxiliary function $z: \hat X \to \R$ that satisfies the three properties:
\begin{eqnarray}\label{eq2.3}
\end{eqnarray}

\begin{itemize}
\item $0$ is a regular value of $z$,   
\item $z^{-1}(0) = \d X$, and 
\item $z^{-1}((-\infty, 0]) = X$. 
\end{itemize}

In terms of $z$, the locus $\d_jX =_{\mathsf{def}} \d_jX(v)$ is defined by the equations: 
$$\big\{z =0,\; \mathcal L_vz = 0,\; \ldots, \;  \mathcal L_v^{(j-1)}z = 0\big\},$$
where $\mathcal L_v^{(k)}$ stands for the $k$-th iteration of the Lie derivative operator $\mathcal L_v$ in the direction of $v$ (see \cite{K2}). 
The pure stratum $\d_jX^\circ \subset \d_jX$ is defined by the additional constraint  $\mathcal L_v^{(j)}z \neq 0$. The locus $\d_jX$ is the union of two loci: {\bf (1)} $\d_j^+X$, defined by the constraint  $\mathcal L_v^{(j)}z \geq  0$, and {\bf (2)} $\d_j^-X$, defined by the constraint  $\mathcal L_v^{(j)}z \leq  0$. The two loci, $\d_j^+X$ and $\d_j^-X$, share a common boundary $\d_{j+1}X$. 

For a {\sf generic} $v$, all the strata $\d_j X$ are smooth $(n+1-j)$-manifolds. The requirement of $v$ being generic with respect to $\d X$ may be expressed as the property of the $j$-form
\begin{eqnarray} \label{eq.generic_simple}
dz \wedge d(\mathcal L_vz) \wedge \; \ldots \;  \wedge\; d(\mathcal L_v^{(j-1)}z)
\end{eqnarray}
 being a nonzero section of the bundle $\bigwedge^{j}T_\ast X$ along the locus  ${\d_jX}$ \;  for all $j \in [1, n+1]$.
If $v$ on $X$ is generic to $\d X$, then each point $x \in \d X$ belongs to a unique  minimal stratum $\d_j X \subset \d X$ with a maximal $j = j(x) \leq n+1$.  In the generic case, at each $b \in \d X$,  a flag $$\mathsf{Flag}_b(v) ={\mathsf{def}} \{T_b(\d X) = F^n \supset F^{n-1} \supset \ldots \supset F^{n-j(b)+1}\}$$  is generated by the tangent spaces at $b$ to all the Morse strata $\{\d_j X\}_{j \leq j(b)}$ that contain $b$.   

Let $\hat v$ be a traversing vector field on a compact smooth $(n+1)$-dimensional manifold $\hat X$ with boundary. Consider a submersion $\a: X \to \mathsf{int}(\hat X)$, $\dim X = \dim \hat X$, such that the self-intersections of $\a(\d X)$ mutually transversal. Let $v = \a^\dagger(\hat v)$ be the transfer of $\hat v$ to $X$.\smallskip

For general submersions $\a$, which are not necessarily embeddings, the situation is more complex: not only one gets multiple self-intersections $\{\Sigma_k\}_{k \in [2, n+1]}$ of various branches of $\a(\d X)$, but such self-intersections may be tangent to the $\hat v$-flow in a variety of ways that produce similar Morse-type stratifications of the loci $\Sigma_k$, $k \geq 2$, as well. Prior to Theorem \ref{th.traversally_generic}, we will revisit this complication.\smallskip

We associate several  flags $\{\mathsf{Flag}_b(v)\}_{b \in \a^{-1}(a)}$ with each point $a \in \a(\d X)$. Let $\a^\d_\ast$ denote the differential of the immersion $\a^\d: \d X \to \hat X$. 

\begin{definition} We say that several vector subspaces $\{V_i \subset W\}_i$ of a given vector space $W$ are {\sf in general position}, if the obvious map $W \to \oplus_i (W/V_i)$ is onto. Note that this definition allows for any numbers of $V_i$'s to coincide with the ambient $W$.\smallskip

We say that the flags $\{(\a^\d_\ast)[\mathsf{Flag}_b(v)]\}_{b \in \a^{-1}(a)}$  are  {\sf in general position} in the ambient space $T_a\hat X$, if the $\a_\ast$-images of the minimal strata $\{F^{n-j(b)+1}(v)\}_{b \in \a^{-1}(a)}$ of the flags $\{\mathsf{Flag}_b(v)\}_{b \in \a^{-1}(a)}$ are {\sf in general position} in $T_a\hat X$. \hfill $\diamondsuit$
\end{definition}

\begin{example} \emph{Consider the case $n=2$, depicted in Fig.\ref{fig.A_generic}.}

\emph{If $\#(\a^\d)^{-1}(a) =1$,  then each flag $(\a^\d_\ast)\{F^2 \supset F^1 \supset F^0\}$, or $(\a^\d_\ast)\{F^2 \supset F^1\}$, or $(\a^\d_\ast)\{F^2\}$ is in general position at $a$. } \smallskip

\emph{If $\#(\a^\d)^{-1}(a) =2$,  a pair of flags is in general position,  if and only if, it is of the form $(\a^\d_\ast)\{F^2 \supset F^1 \}$ and $(\a^\d_\ast)\{G^2 \}$, or of the form $(\a^\d_\ast)\{F^2 \}$ and $(\a^\d_\ast)\{G^2 \}$.} \smallskip

\emph{If $\#(\a^\d)^{-1}(a) =3$,  a triple of flags is in general position only if the pair is of the form $(\a^\d_\ast)\{F^2 \}$, $(\a^\d_\ast)\{G^2 \}$, $(\a^\d_\ast)\{H^2 \}$. The rest of combinations fail to be generic.} \hfill $\diamondsuit$
\end{example}

\begin{figure}[ht]
\centerline{\includegraphics[height=2.in,width=4.3in]{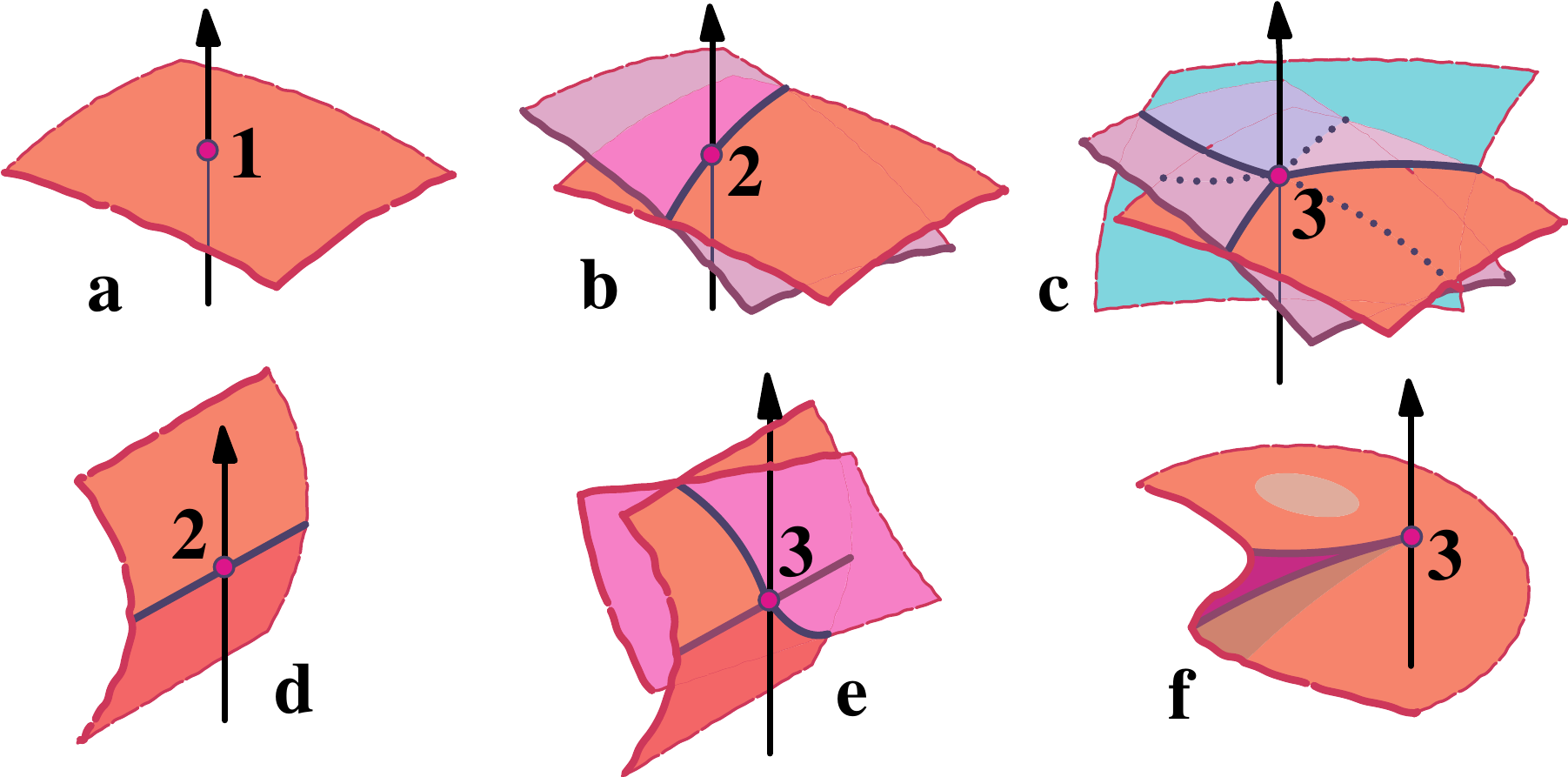}}
\bigskip
\caption{\small{Six locally generic configurations of $\a^\d(\d X)$ in 3D. The numbers $1,2,3$ reflect the local multiplicity of the marked point $a$ on the trajectory $\hat\g_a$ through it.}} 
\label{fig.A_generic}
\end{figure}

\begin{definition}\label{def.multiplicity_of_sub} Let $X, \hat X$ be smooth compact $(n+1)$-manifolds with boundary and $\hat v$ a traversing vector field on $\hat X$. We assume that a submersion $\a: X \to \hat X$ has the following properties: for each point $a \in \a(\d X)$, there exist a natural number $k = k(a) \leq n+1$, an open neighborhood $U_a$ of $a$ in $\hat X$, and smooth functions $\{z_1, \dots , z_k:  U_a \to \R\}$ such that: 
\begin{enumerate} 
\item $0$ is a regular value for each $z_i$, 
\item in $U_a$, the locus  $\a(\d X)$ is given by the equation $\{z_1\cdot  \ldots \cdot z_k = 0\}$,
\item the differential $k$-form $dz_1 \wedge \ldots \wedge dz_k \, |_{U_a} \neq 0.$\footnote{Thus $\a^\d$ is $k$-{\sf normal} in the sense of Definition 3.3 from \cite{K9}.} 
\end{enumerate}
Let $\hat\g_a$  denote the $\hat v$-trajectory through $a$.\smallskip

We say that a point $a \in \a(\d X)$ has a {\sf multiplicity} $j = j(a)$ with respect to $\hat v$, if the jet $\mathsf{jet}^{j-1}_a\big(z_1\cdot  \ldots \cdot z_k\big |_{\hat\g_a}\big) = 0$, but $\mathsf{jet}^{j}_a\big(z_1\cdot  \ldots \cdot z_k\big |_{\hat\g_a}\big) \neq 0$. 
  \hfill $\diamondsuit$
\end{definition}



\begin{definition}\label{def.boundary_generic} Let $X, \hat X$ be smooth compact $(n+1)$-manifolds with boundary and $\hat v$ a traversing vector field on $\hat X$. Let $U$ be an open $\hat v$-flow adjusted neighborhood of $a$ in $\hat X$. For each point $a \in \mathsf{int}(\hat X)$, consider a smooth transversal section $S$ of the $\hat v$-flow at $a$ and the flow-generated local projections $\pi: U \to S$. 

We say that a submersion $\a: X \to \mathsf{int}(\hat X)$  is {\sf locally generic} relative to $\hat v$ if, for each point $a \in \a(\d X)$, 
\begin{itemize}
\item the images of the flags\footnote{equivalently, of their minimal strata} $\{\mathsf{Flag}_b(v)\}_{b \in \a^{-1}(a)}$, under the differentials $(\a^\d)_\ast$, are in general position in $T_a(\hat X)$, 

\item the images of the flags $\{\mathsf{Flag}_b(v)\}_{b \in \a^{-1}(a)}$, under the differentials $(\pi \circ \a^\d)_\ast$, are in general position in the tangent space $T_aS$. \hfill $\diamondsuit$
\end{itemize}
\end{definition}

\smallskip

One may compare the next definition, which utilizes the notion of convexity, with Definition \ref{def.k-flat_envelops}, introducing the more general notion of  $k$-convexity.

\begin{definition}\label{def.convex} Let $\hat X$ be a compact connected smooth manifold with boundary, equipped with a vector field  $\hat v$. We say that the pair $(\hat X, \hat v)$ is {\sf  convex} if 
\begin{itemize}
\item $\hat v$ admits a Lyapunov function $\hat f: \hat X \to \R$  (i.e.,
$d\hat f(\hat v) > 0$); equivalently, $\hat v$ is {\sf traversing}, 

\item $\d\hat X$ is locally generic with respect to $\hat v$,
\item $\d_2^+\hat X(\hat v) = \emptyset$ (equivalently, $\hat v$ is {\sf convex}).
\hfill $\diamondsuit$
\end{itemize}
\end{definition}

\begin{remark}
\noindent
\emph{Any compact connected manifold $\hat X$ with boundary has a traversing vector field \cite{K1}. However, not any compact manifold with boundary admits a traversing} convex \emph{vector field! For example, consider any surface $\hat X$, obtained from a closed oriented connected surface, different from the $2$-sphere, by removing an open disk. Such an $\hat X$ does not admit a traversing convex vector field \cite{K5}.} \smallskip

\emph{By \cite{K1}, Lemma 4.1, any traversing vector field $\hat v$, admits a  Lyapunov function.}

\emph{If $Y$ is a \emph{closed} manifold, then any vector field $\hat v \neq 0$ that is tangent to the fibers of the obvious projection $[0, 1] \times Y \to Y$  is evidently convex with respect to $\d[0, 1] \times Y$. The obvious function $\hat f: [0, 1] \times Y \to [0, 1]$ has the desired Lyapunov property $d\hat f(\hat v) > 0$.} 
\hfill $\diamondsuit$
\end{remark}

\begin{example}\label{ex.non-trapping}
\emph{Any {\sf non-trapping} Riemannian metric $g$ on a compact smooth manifold $M$ with a {\sf convex} boundary $\d M$ produces the \emph{geodesic vector field} $\hat v^g$ on the unit spherical bundle $SM \to M$, which is traversing and convex with respect to $\d(SM)$ \cite{K6}. The very existence of such a metric $g$ puts severe restrictions on the topological nature of $M$.} \hfill $\diamondsuit$ 
\end{example}

Let $\mathsf{conv}(\hat X)$ denote the space of traversing vector fields $\hat v$ on $\hat X$ such that $(\hat X, \hat v)$ is a convex pair, as in Definition \ref{def.convex}. The space $\mathsf{conv}(\hat X)$ is considered in the $C^\infty$-topology.

\begin{lemma}\label{lem.conv_fields} 
For any  $\hat v_0, \hat v_1 \in \mathsf{conv}(\hat X)$ that belong to the same path-connected component of the space $\mathsf{conv}(\hat X)$, there exists a smooth diffeomorphism $\phi: \hat X \to \hat X$ such that $\phi$ maps $\hat v_0$-trajectories to $\hat v_1$-trajectories, while preserving their orientations.
\smallskip

If, for a pair $\hat v_0, \hat v_1 \in \mathsf{conv}(\hat X)$, there exists a smooth isotopy $\{\psi^t: \d\hat X \to \d\hat X\}_{t \in [0,1]}$ such that $\psi^1(\d_1^+\hat X(\hat v_0)) = \d_1^+\hat X(\hat v_1)$, then exists a smooth diffeomorphism $\phi: \hat X \to \hat X$, an extension of $\psi^1$,  that maps $\hat v_0$-trajectories to $\hat v_1$-trajectories, while preserving their orientations.
\end{lemma}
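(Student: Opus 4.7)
The strategy for both assertions is to identify the ``entry'' strata $\d_1^+\hat X(\hat v_0)$ and $\d_1^+\hat X(\hat v_1)$ via a boundary diffeomorphism, and then extend this identification inward along the flow by an affine Lyapunov reparametrization of trajectories. I will first reduce the first assertion to the second. By path-connectedness pick a smooth path $\{\hat v_t\}_{t\in[0,1]}\subset\mathsf{conv}(\hat X)$ joining $\hat v_0$ and $\hat v_1$. Local boundary-genericity of $\hat v_t$ (built into Definition~\ref{def.convex}) together with convexity $\d_2^+\hat X(\hat v_t)\equiv\emptyset$ throughout the path imply that $\d_1^+\hat X(\hat v_t)$, with common boundary $\d_2^-\hat X(\hat v_t)$, is a smoothly varying family of neat codimension-zero submanifolds-with-boundary inside the closed $n$-manifold $\d\hat X$. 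Applied to the codimension-one tangency stratum $\d_2^-\hat X(\hat v_t)$, the isotopy extension theorem produces an ambient isotopy $\{\psi^t:\d\hat X\to\d\hat X\}$ with $\psi^0=\mathrm{id}$ and $\psi^1(\d_1^+\hat X(\hat v_0))=\d_1^+\hat X(\hat v_1)$; this is precisely the hypothesis of the second assertion.

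For the second assertion, the key geometric input is that in a convex pair $(\hat X,\hat v)$ every $\hat v$-trajectory has a unique birth point in $\d_1^+\hat X(\hat v)\setminus\d_2^-\hat X(\hat v)$ and a unique death point in $\d_1^-\hat X(\hat v)\setminus\d_2^-\hat X(\hat v)$; interior meetings of a trajectory with $\d\hat X$ can occur only at $\d_2^-\hat X(\hat v)$ as ``kisses from inside''. First extend $\{\psi^t\}$ to an ambient isotopy of the closed manifold $\d\hat X$; because $\d_2^-\hat X(\hat v_i)$ is the common boundary of $\d_1^\pm\hat X(\hat v_i)$, the extension $\psi^1$ automatically sends $\d_2^-\hat X(\hat v_0)$ to $\d_2^-\hat X(\hat v_1)$ and $\d_1^-\hat X(\hat v_0)$ to $\d_1^-\hat X(\hat v_1)$. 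Next, choose Lyapunov functions $\hat f_0,\hat f_1$ normalized so that $d\hat f_i(\hat v_i)\equiv 1$, and define $\phi$ trajectory by trajectory: for a $\hat v_0$-trajectory $\hat\g_0$ with birth $p$ and death $q$, let $\phi$ map $\hat\g_0$ onto the $\hat v_1$-trajectory from $\psi^1(p)$ to $\psi^1(q)$ by the affine reparametrization
\[
\hat f_1\bigl(\phi(\hat\g_0(s))\bigr)\;=\;\hat f_1(\psi^1(p))+\tfrac{s-\hat f_0(p)}{\hat f_0(q)-\hat f_0(p)}\bigl(\hat f_1(\psi^1(q))-\hat f_1(\psi^1(p))\bigr),\quad s=\hat f_0|_{\hat\g_0}.
\]
Trajectory- and orientation-preservation are automatic because the slope of the affine formula is positive.

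The main technical obstacle is to verify smoothness of $\phi$ as a map on $\hat X$, and in particular across the trajectories of $\hat v_0$ that meet $\d_2^-\hat X(\hat v_0)$ in their interior, where the birth/death maps from the trajectory space to the boundary strata have a fold-type degeneration (nearby transverse boundary meetings coalesce into the tangential kiss). The plan is to work in the flow-box normal forms for convex boundary tangencies developed in \cite{K1, K2}: in those coordinates adapted to $\d_2^-\hat X(\hat v_i)$, the strata $\d_1^\pm\hat X(\hat v_i)$, the birth/death maps, and the Lyapunov functions $\hat f_i$ all admit explicit smooth descriptions, and since $\psi^1$ pairs the normal-form neighborhoods of $\d_2^-\hat X(\hat v_0)$ and $\d_2^-\hat X(\hat v_1)$, the affine formula extends smoothly across the tangency stratum. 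Once smoothness is settled, the diffeomorphism $\phi$ with the claimed properties follows; the first assertion then holds by the reduction above, and the extra ``extension of $\psi^1$'' clause in the second assertion is built into the construction.
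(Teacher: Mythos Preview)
Your overall strategy---reduce the first assertion to the second via stability of the Morse strata along the path, then build $\phi$ by an affine Lyapunov reparametrization of trajectories---is the same as the paper's. However, your explicit construction contains a genuine error: you send the $\hat v_0$-trajectory from $p$ to $q$ onto ``the $\hat v_1$-trajectory from $\psi^1(p)$ to $\psi^1(q)$'', but in general no such trajectory exists. The hypothesis $\psi^1(\d_1^+\hat X(\hat v_0))=\d_1^+\hat X(\hat v_1)$ only matches the \emph{entry} loci; it does not intertwine the birth-to-death correspondences of the two flows. The $\hat v_1$-trajectory born at $\psi^1(p)$ dies at some point $q'\in\d_1^-\hat X(\hat v_1)$ that has no reason to equal $\psi^1(q)$, so your affine formula---which uses $\hat f_1(\psi^1(q))$ as the upper endpoint of the target range---will typically overshoot or undershoot the target trajectory, and the resulting $\phi$ will fail to carry $\d\hat X$ into $\d\hat X$.

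The repair is precisely what the paper does: first extend the boundary isotopy $\{\psi^t\}$ to an ambient isotopy $\{\hat\psi^t:\hat X\to\hat X\}$, set $\tilde v_0=(\hat\psi^1)_\ast\hat v_0$ with Lyapunov function $\tilde f_0=((\hat\psi^1)^{-1})^\ast f_0$, and then, for each common birth point $x\in\d_1^+\hat X(\hat v_1)$, match the $\tilde v_0$-trajectory $\g_x^{\{0\}}$ with the $\hat v_1$-trajectory $\g_x^{\{1\}}$ by equating the \emph{normalized} Lyapunov variations $\tilde f_0/var_0(x)$ and $f_1/var_1(x)$ along each. Equivalently, in your formula replace $\hat f_1(\psi^1(q))$ by $\hat f_1(q')$, the value at the actual death point of the $\hat v_1$-trajectory issuing from $\psi^1(p)$. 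With this correction $\phi$ is well defined and maps trajectories to trajectories; note, though, that $\phi$ then agrees with $\psi^1$ only along $\d_1^+\hat X(\hat v_0)\cup\d_2^-\hat X(\hat v_0)$ and not on all of $\d\hat X$---this is the situation in the paper's construction as well.
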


\begin{proof} If $\hat v \in \mathsf{conv}(\hat X)$ is a convex boundary generic vector field, then by Theorem 6.6 from \cite{K7}, the stratification $\hat X \supset \d_1^+\hat X(\hat v) \supset \d_2^-\hat X(\hat v)$ is stable, up to an isotopy of $\hat X$, under sufficiently small perturbations of $\hat v$. As a result, within a path connected component of $\hat v$ in $\mathsf{conv}(\hat X)$, the smooth topological type of this Morse stratification is stable via an isotopy. In particular, $\d_1^+\hat X(\hat v_0)$ and $\d_1^+\hat X(\hat v_1)$ are isotopic in $\hat X$, provided the $\hat v_0$ and $\hat v_1$ are connected by a path in $\mathsf{conv}(\hat X)$.
\smallskip

Let us denote by $\{\hat\psi^t: \hat X \to \hat X\}_{t \in [0,1]}$ the isotopy that transforms the $\hat v_0$-induced Morse stratification of $\d\hat X$ to the $\hat v_1$-induced Morse stratification of $\d\hat X$. 
Let us compare the vector fields $\tilde v_0 =_{\mathsf{def}} (\hat\psi^1)_\ast(\hat v_0)$ and $\hat v_1$.  Both vector fields point inside of $\hat X$ exactly along $\d_1^+\hat X(\hat v_1)$. Since $\hat v_0, \hat v_1$ are traversing, they admit some Lyapunov  functions $f_0, f_1: \hat X \to \R$. 
Put $\tilde f_0 =_{\mathsf{def}} ((\hat\psi^1)^{-1})^\ast(f_0)$. It serves as Lyapunov's function for $\tilde v_0$. 
For $x \in \d_1^+\hat X(\hat v_1)$, we denote by $\g_x^{\{0\}}$ and $\g_x^{\{1\}}$ the $\tilde v_0$- and $\hat v_1$-trajectories through $x$. Let $var_0(x)$ stands for the variation of the function $\tilde f_0$ along $\g_x^{\{0\}}$ and $var_1(x)$ for the variation of the function $\hat f_1$ along $\g_x^{\{1\}}$.

For $y \in \g_x^{\{0\}}$, consider the unique point $\tilde\phi(y) \in \g_x^{\{1\}}$ such that $f_1(\tilde\phi(y))/var_1(x) = \tilde f_0(y)/var_0(x)$.
Now the diffeomorphism $\phi := \tilde \phi \circ \hat\psi^1$ takes $\hat v_0$-trajectories to $\hat v_0$-trajectories, while preserving their orientations. Hence, we have shown that  the isotopy class of $\d_1^+\hat X(\hat v)$ in $\d X$ determines the smooth topological type of a convex pair $(\hat X, \mathcal L(\hat v))$, where $\mathcal L(\hat v)$ denotes the oriented $1$-foliation, determined by a convex traversing $\hat v$.
\end{proof}





\begin{lemma} Let $\hat X$ is a connected compact smooth manifold with boundary. If 
$\#(\pi_0(\d\hat X)) \geq 3$, then $\mathsf{conv}(\hat X) = \emptyset$. 
\end{lemma}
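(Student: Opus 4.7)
The plan is to show directly that any $\hat v \in \mathsf{conv}(\hat X)$ on a connected $\hat X$ forces $\#\pi_0(\partial \hat X) \leq 2$, which contradicts the hypothesis of at least three boundary components. I would begin by introducing the entry and exit maps $e,x:\hat X\to \partial\hat X$ sending $p$ to the two endpoints of the $\hat v$-trajectory $\gamma_p$ through $p$, which is a compact arc (possibly a point) since $\hat v$ is traversing; by construction $e(p)\in\partial_1^+\hat X(\hat v)$ and $x(p)\in\partial_1^-\hat X(\hat v)$.

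The core step is to verify that $e$ and $x$ are continuous on $\hat X$, and this is where the convexity hypothesis $\partial_2^+\hat X(\hat v)=\emptyset$ is decisive. Convexity guarantees that each maximal integral curve of $\hat v$ meets $\hat X$ in a single connected piece, so the entry- and exit-time functions vary continuously with the base point. A clean way to see this is that, for a Lyapunov function $\hat f$ for $\hat v$, the entry point of $\gamma_p$ is the unique minimizer of $\hat f\big|_{\gamma_p}$, and under convexity this minimizer depends continuously on $p$. Once continuity of $e,x$ is in hand, the connectedness of $\hat X$ forces the images $e(\hat X)$ and $x(\hat X)$ into single connected components $C_{\mathsf{in}}$ and $C_{\mathsf{out}}$ of $\partial\hat X$.

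To conclude, I would observe that every point of $\partial_1^+\hat X\setminus\partial_2\hat X$ is its own entry, hence lies in $C_{\mathsf{in}}$, and every point of $\partial_1^-\hat X\setminus\partial_2\hat X$ is its own exit, hence lies in $C_{\mathsf{out}}$; together these exhaust $\partial\hat X\setminus\partial_2\hat X$. For $n\geq 1$ the tangency locus $\partial_2\hat X$ is a smooth submanifold of $\partial\hat X$ of codimension $1$, so its complement meets every connected component of $\partial\hat X$; each component is therefore either $C_{\mathsf{in}}$ or $C_{\mathsf{out}}$, yielding $\#\pi_0(\partial\hat X)\leq 2$. The degenerate case $n=0$ is trivial, since a connected compact $1$-manifold with boundary is an interval and already has $\#\pi_0(\partial\hat X)=2$. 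The main obstacle is really the continuity of $e$ and $x$: without convexity, escape tangencies would split integral curves into several trajectories inside $\hat X$ and produce jumps in $e,x$, so a careful appeal to the traversing-flow theory (as in \cite{K1}, \cite{K2}) underpins this step.
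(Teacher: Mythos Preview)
Your argument is correct and lands on the same conclusion as the paper's proof, but the packaging differs slightly. The paper simply invokes the fact (used several times elsewhere in the article) that for a convex traversing $\hat v$ each of $\partial_1^+\hat X(\hat v)$ and $\partial_1^-\hat X(\hat v)$ is a deformation retract of the connected $\hat X$, hence connected, hence contained in a single boundary component; since their union is all of $\partial\hat X$, at most two components can occur. Your entry/exit maps $e,x$ are precisely the retractions underlying that deformation, so your continuity discussion is an explicit unpacking of what the paper takes for granted. One small simplification: you do not need the detour through $\partial_1^\pm\hat X\setminus\partial_2\hat X$ and the codimension of $\partial_2\hat X$, since in fact $e(\hat X)=\partial_1^+\hat X(\hat v)$ and $x(\hat X)=\partial_1^-\hat X(\hat v)$ on the nose (tangency points are their own entry and exit), which already covers every boundary point.
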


\begin{proof} For a convex $\hat v$, $\d_1^+ \hat X(\hat v)$ and $\d_1^- \hat X(\hat v)$, each is a deformation retract of $\hat X$. Therefore, each of these two loci must be connected. Thus each of the loci $\d_1^+ \hat X(\hat v)$, $\d_1^- \hat X(\hat v)$ must be contained in some connected component of $\d\hat X$. When $\#(\pi_0(\d\hat X)) \geq 3$, this argument forces at least one component of the boundary to be free from both loci. However, the union of the two loci is the entire boundary. This contradiction proves the claim.
\end{proof}

\subsection{Homology spheres and convex  flows}
Let $\mathbf{HS}_{n-1}$ denote the monoid of smooth types of {\sf integral homology $(n-1)$-spheres}, being considered up to connected sums with smooth {\sf homotopy} $(n-1)$-spheres. 

For $n \neq 4$, let $\mathbf \Theta_n$ denote the group of $h$-cobordism classes of {\sf smooth homotopy $n$-spheres}. The operations in $\mathbf{HS}_{n-1}$ and in $\mathbf \Theta_n$ are the connected sums of spheres. We denote the order of the group $\mathbf \Theta_n$ by $|\mathbf \Theta_n|$.

\begin{proposition}\label{prop.homology_sphere} Any convex vector field $\hat v$ on the standard ball $D^{n+1}$ defines a smooth involution $\tau_{\hat v}$ on $S^n = \d D^{n+1}$, whose fixed point set $\d_2^-D^{n+1}(\hat v)$ is an integral homology $(n-1)$-sphere. 

For $n \geq 6$, any element of the set $\mathbf{HS}_{n-1}$ arises as the locus $\d_2^-\hat X(\hat v)$ for a convex traversing vector field $\hat v$ on a smooth compact contractible $(n+1)$-dimensional manifold $\hat X$, whose boundary $\d \hat X$ is a smooth homotopy sphere. Moreover, $\d \hat X$ admits a smooth involution $\tau_{\hat v}$ such that $(\d \hat X)^{\tau_{\hat v}} = \d_2^-\hat X(\hat v)$.\smallskip

For $n \geq 6$, the multiple $|\mathbf \Theta_n| \cdot [\Sigma]$ of any given element $[\Sigma] \in \mathbf{HS}_{n-1}$ arises as the locus $\d_2^-D^{n+1}(\hat v)$ for a convex traversing vector field $\hat v$ on the ball $D^{n+1}$. The sphere $\d D^{n+1}$ admits a smooth involution $\tau_{\hat v}$ such that $(\d D^{n+1})^{\tau_{\hat v}} = \d_2^-D^{n+1}(\hat v)$.
\end{proposition}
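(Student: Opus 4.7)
For any convex traversing $\hat v$ on $D^{n+1}$, each $\hat v$-trajectory either runs from a unique point of $\d_1^+D^{n+1}(\hat v)\setminus\d_2^-$ to a unique point of $\d_1^-D^{n+1}(\hat v)\setminus\d_2^-$, or is a single point of $\d_2^-$. Define $\tau_{\hat v}:S^n\to S^n$ to swap the two endpoints of each non-degenerate trajectory and fix $\d_2^-$ pointwise. Smoothness across $\d_2^-$ is local: convexity plus boundary genericity (cf.\ \cite{K1}, \cite{K7}) gives coordinates $(x,s,t)\in\R^{n-1}\times\R\times\R$ at any $p\in\d_2^-$ in which $\d\hat X=\{t^2=s\}$ and $\hat v=\d_t$, so $\tau_{\hat v}(x,s,t)=(x,s,-t)$. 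Boundary genericity also makes $\d_2^-$ a smooth closed $(n-1)$-manifold. Forward and backward flows deformation-retract $D^{n+1}$ onto $\d_1^+$ and $\d_1^-$ respectively, so both are contractible; Mayer--Vietoris for $S^n=\d_1^+\cup_{\d_2^-}\d_1^-$ gives $\tilde H_k(\d_2^-)\cong\tilde H_{k+1}(S^n)$ for all $k$, so $\d_2^-$ has the homology of $S^{n-1}$.

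\textbf{Part (2).} Since $n-1\ge 5$, Kervaire's theorem provides a smooth contractible $n$-manifold $W$ with $\d W=\Sigma$. Pick a smooth boundary-defining function $\phi:W\to[0,\infty)$ (so $\phi^{-1}(0)=\Sigma$ and $d\phi\ne 0$ on $\Sigma$) and set
\[
\hat X \;:=\; \big\{(w,t)\in W\times\R \,:\, t^2\le \phi(w)\big\}, \qquad \hat v\;:=\;\d_t.
\]
Writing $F(w,t):=t^2-\phi(w)$, one has $dF=-d\phi+2t\,dt$, which is nonzero on $\{F=0\}$ because $d\phi\ne 0$ on $\Sigma$; so $\hat X$ is a smooth compact $(n+1)$-manifold with boundary, and the retraction $(w,t)\mapsto(w,0)$ shows $\hat X$ is contractible. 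The two sheets $t=\pm\sqrt{\phi(w)}$ of $\d\hat X$ join smoothly along $\Sigma\times\{0\}$ via the parabolic model of Part (1) and together form the double $D(W)=W\cup_\Sigma W$, which is simply connected (van Kampen amalgamates two trivial groups) with the homology of $S^n$ (Mayer--Vietoris), hence is a homotopy $n$-sphere. Along $\Sigma\times\{0\}$ one computes $F=0$, $\mathcal L_{\hat v}F=0$, $\mathcal L_{\hat v}^{(2)}F=2>0$, so $(\hat X,\hat v)$ is boundary-generic and convex with $\d_2^+\hat X(\hat v)=\emptyset$ and $\d_2^-\hat X(\hat v)=\Sigma$, and the involution $\tau_{\hat v}$ is simply $(w,t)\mapsto(w,-t)$.

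\textbf{Part (3) and main obstacle.} Let $(\hat X_\Sigma,\hat v_\Sigma)$ be the convex pair from Part (2); its boundary $\Pi_\Sigma:=\d\hat X_\Sigma$ represents some class $[\Pi_\Sigma]\in\mathbf\Theta_n$. Set $N:=|\mathbf\Theta_n|$ and form the $N$-fold boundary connected sum $(\hat X,\hat v):=(\hat X_\Sigma,\hat v_\Sigma)^{\natural N}$, using the convex-pair version of the $\uplus$ operation from (\ref{eq_cup_sub}), performing each $\natural$ at a point of $\d_2^-$ so that the attaching region straddles $\d_1^+$ and $\d_1^-$ and cuts $\d_2^-$ in a small spherical disk. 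Then $\d_2^-\hat X(\hat v)=\Sigma^{\# N}$, i.e.\ the class $|\mathbf\Theta_n|\cdot[\Sigma]\in\mathbf{HS}_{n-1}$, while $\d\hat X=\Pi_\Sigma^{\# N}$ represents $|\mathbf\Theta_n|\cdot[\Pi_\Sigma]=0\in\mathbf\Theta_n$, so $\d\hat X$ is the standard sphere $S^n$. A boundary connected sum of contractibles is contractible, and since $n+1\ge 7$ Smale's $h$-cobordism theorem identifies $\hat X$ with the standard ball $D^{n+1}$. The main obstacle is the refined $\natural$-construction: the plain $\uplus$ of Section 3 attaches along half-disks \emph{interior} to $\d_1^\pm$ and therefore only disjoint-unions the $\d_2^-$ loci, so one must enhance it by centering the attaching region on a point of $\d_2^-$ and using a parabolic local model (of the kind in Part (2)) for the attaching handle, and then verify that the resulting smooth vector field is convex and traversing across the gluing while realising a genuine connected sum of tangency loci.
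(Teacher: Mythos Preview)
Your proof is correct and follows essentially the same route as the paper: the same involution/retraction argument in Part~(1) (you use Mayer--Vietoris where the paper invokes Poincar\'e duality), the same $\{t^2\le \phi\}$ model over a contractible $W$ in Part~(2), and the same boundary-connected-sum-of-convex-pairs plus $h$-cobordism argument in Part~(3). Two small remarks: in Part~(2), Kervaire's theorem only gives a contractible filling of $\Sigma\#\Sigma_H$ for a suitable homotopy sphere $\Sigma_H$, which is exactly why the statement is phrased in terms of the quotient monoid $\mathbf{HS}_{n-1}$---so your shortcut is legitimate, but the paper is explicit about this adjustment; and the ``main obstacle'' you flag in Part~(3) is nothing other than the convex-pair operation $\#_\d$ of Section~4.5 (handles centered on $\d_2^-$, hemispheres landing in $\d_1^\pm$), which the paper invokes with the equally brief justification ``the fields extend concavely across the $1$-handles.''
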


\begin{proof} Using a convex $\hat v$-flow, $\d_1^+D^{n+1}(\hat v)$ is a deformation retract of $D^{n+1}$ and thus a contractible manifold. By Poincar\'{e} duality, $\d_2^-D^{n+1}(\hat v)$, the boundary of $\d_1^+D^{n+1}(\hat v)$, is a homology sphere.\smallskip

By \cite{Ke}, Theorem 3, for $n -1 \geq  5$, any smooth homology sphere $\Sigma^{n-1}$, after a connected sum $\Sigma^{n-1} \#\, \Sigma^{n-1}_H$ with a unique smooth homotopy $(n-1)$-sphere $\Sigma^{n-1}_H$, bounds a contractible smooth manifold $W^n$. \smallskip

Consider a smooth metric $g$ on $W^n$. We denote by $d_g(x, \d W^n)$ the smooth distance function to $\d W^n$ on a collar $U$ of $\d W^n$ in $W^n$. Let $F: W^n \to \R_+$ be a function that is strictly positive and smooth in the interior of $W^n$ and coincides with the function $\tilde F(x) := \sqrt{d_g(x, \d W^n)}$ in the collar $U$. 

Consider a smooth manifold $\hat X^{n+1} \subset \R \times W^n$, given by the inequality $\{(t, x): |t | \leq F(x)\}$. It comes with the vector field $\hat v$ that is tangent to the fibers of the obvious projection $p: \R \times W^n \to W^n$. Since $W^n$ is contractible, the boundary of $\hat X^{n+1}$, the double of $W^n$, is a smooth homotopy $n$-sphere, and $\hat X^{n+1}$ is a homotopy ball. 
The vector field $\hat v$ on $\hat X^{n+1}$ is convex and defines an involution $\tau_{\hat v}: \d \hat X^{n+1} \to \d \hat X^{n+1}$, whose fixed point set $(\d \hat X^{n+1})^{\tau_{\hat v}} = \d_2^-\hat X^{n+1}(\hat v) = \d W = \Sigma^{n-1} \#\, \Sigma^{n-1}_H$. This validates the second claim.

The connected sum of $|\mathbf \Theta_n|$ copies of $\d \hat X^{n+1}$ is a standard $n$-sphere. Consider the boundary connected sum $(\hat Y^{n+1}, \tilde v)$ of the $|\mathbf \Theta_n|$ copies of the pair $(\hat X^{n+1}, \hat v)$. Here the $1$-handles $H = D^n \times [0, 1]$ are attached at pairs of points that belong to different pairs of $\d W^n$'s so that $|\mathbf \Theta_n|$ copies of $W^n$ are connected by the chain of $1$-handles $D^{n-1}_+ \times [0, 1]$, where $D^{n-1}_+ \subset \d D^n$ is a hemisphere. The fields $\tilde v$ in the different copies extend concavely across the $1$-handles.  Then $\d \hat Y^{n+1}$ is the standard sphere $S^n$ which bounds a contractible manifold $\hat Y^{n+1}$. By the $h$-cobordism theorem (see \cite{Mi}), applied to $\hat Y^{n+1} \setminus D^{n+1}$, we conclude that $\hat Y^{n+1}$ is the standard ball.
Thus we managed to build a convex vector field $v^\#$ on $D^{n+1}$ whose locus  $\d_2^-D^{n+1}(v^\#)$ is a homology sphere $\tilde\Sigma^{n-1}$, the $|\mathbf \Theta_n|$-multiple of the given class $\Sigma^{n-1} \in \mathbf{HS}_{n-1}$. In particular, 
$\d D^{n+1}$ admits a smooth involution $\tau_{v^\#}$ whose fixed point set is $|\mathbf \Theta_n| \cdot \Sigma^{n-1}$.
\end{proof}

\begin{example}\label{ex.icosohedron} \emph{Consider a free action of the icosahedral group $\mathsf I_{120}$ on $S^3$. Then $\mathsf I_{120}$ acts freely on $S^7 = \mathsf{join}(S^3, S^3)$. Hence the orbit space $\Sigma^7 =_{\mathsf{def}} S^7/\mathsf I_{120}$ is a homology sphere. By Proposition \ref{prop.homology_sphere}, there is a convex traversing vector field $\hat v$ on the ball $D^9$, such that its locus $\d_2^-D^9(\hat v)$ is a connected sum of $28$ copies of $[\Sigma^7] \in \mathbf{HS}_7$, and its locus $\d_1^+D^9(\hat v)$ is contractible. Note that $\pi_1(\d_2^-D^9(\hat v))$ is a free product of 28 copies of $\mathsf I_{120}$.}
\hfill $\diamondsuit$
\end{example}

\begin{corollary}For $n > 6$, $\pi_0(\mathsf{conv}(D^{n+1}))$ admits a surjection onto a subgroup $\mathbf{G}_{n-1}$ of $\mathbf{HS}_{n-1}$ of index $|\mathbf \Theta_n|$ at most. 

In particular, here are a few ``clean" surjections: 
$$\pi_0(\mathsf{conv}(D^{7})) \to \mathbf{HS}_{5}, \; \pi_0(\mathsf{conv}(D^{13})) \to \mathbf{HS}_{11}, \;\pi_0(\mathsf{conv}(D^{62})) \to \mathbf{HS}_{60}.$$
\end{corollary}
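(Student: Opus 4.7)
Define
\[
\Psi:\ \pi_0(\mathsf{conv}(D^{n+1}))\ \longrightarrow\ \mathbf{HS}_{n-1},\qquad [\hat v]\ \longmapsto\ [\,\d_2^- D^{n+1}(\hat v)\,].
\]
By the first assertion of Proposition \ref{prop.homology_sphere}, $\d_2^- D^{n+1}(\hat v)$ is an integral homology $(n-1)$-sphere, so the target is correct. Well-definedness on $\pi_0$ follows from Lemma \ref{lem.conv_fields}: if $\hat v_0,\hat v_1$ lie in the same path-component of $\mathsf{conv}(D^{n+1})$, there is a self-diffeomorphism $\phi$ of $D^{n+1}$ sending $\hat v_0$-trajectories to $\hat v_1$-trajectories while preserving their orientations. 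Since the stratum $\d_2^-$ is determined intrinsically by the tangency between the flow and $\d D^{n+1}$, $\phi$ restricts to a diffeomorphism $\d_2^- D^{n+1}(\hat v_0)\cong\d_2^- D^{n+1}(\hat v_1)$, so both sides represent the same class in $\mathbf{HS}_{n-1}$.

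Next, I verify that the image $\mathbf{G}_{n-1}:=\mathrm{Im}(\Psi)$ is closed under the connected-sum operation of $\mathbf{HS}_{n-1}$. Given convex pairs $(D^{n+1},\hat v_0)$ and $(D^{n+1},\hat v_1)$, perform a boundary connected sum exactly as in the final paragraph of the proof of Proposition \ref{prop.homology_sphere}: attach a $1$-handle along hemispheres $D^{n-1}_+$ in the respective $\d_1^+$ loci and extend the two convex fields concavely across the handle. The result is a convex traversing flow on $D^{n+1}\natural D^{n+1}\cong D^{n+1}$ whose $\d_2^-$ locus is the connected sum $\d_2^- D^{n+1}(\hat v_0)\,\#\,\d_2^- D^{n+1}(\hat v_1)$; hence $\mathbf{G}_{n-1}$ is a sub-monoid of $\mathbf{HS}_{n-1}$. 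By the third assertion of Proposition \ref{prop.homology_sphere}, $|\mathbf\Theta_n|\cdot[\Sigma]\in\mathbf{G}_{n-1}$ for every $[\Sigma]\in\mathbf{HS}_{n-1}$. Combined with sub-monoid closure, this gives $|\mathbf\Theta_n|\cdot\mathbf{HS}_{n-1}\subseteq\mathbf{G}_{n-1}$, so the coset space $\mathbf{HS}_{n-1}/\mathbf{G}_{n-1}$ has order at most $|\mathbf\Theta_n|$, which is the desired index bound.

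The three clean surjections then drop out by invoking the Kervaire--Milnor computation $\mathbf\Theta_6=\mathbf\Theta_{12}=\mathbf\Theta_{61}=0$; in each case $|\mathbf\Theta_n|=1$ forces $\mathbf{G}_{n-1}=\mathbf{HS}_{n-1}$. The most delicate step is the boundary connected sum construction: one must check that attaching a $1$-handle at interior points of $\d_1^+\setminus\d_2^-$ and joining the two traversing convex fields through it preserves both the traversing property (by concatenating the Lyapunov functions) and the convexity condition $\d_2^+\hat X(\hat v)=\emptyset$, and that the induced Morse stratification of the resulting boundary realizes the connected sum precisely on the $\d_2^-$ stratum. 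This hinges on the local normal form for a convex field near an interior point of $\d_1^+$ (which is essentially a product form transversal to the flow) together with the $h$-cobordism argument already invoked in the proof of Proposition \ref{prop.homology_sphere} to identify the handle-body with the standard ball.
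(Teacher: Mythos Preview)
Your argument follows the same route as the paper's: define $\Psi([\hat v]) = [\d_2^- D^{n+1}(\hat v)]$, invoke Proposition \ref{prop.homology_sphere} both for the target being a homology sphere and for realizing the $|\mathbf\Theta_n|$-multiples, and use Lemma \ref{lem.conv_fields} for well-definedness on $\pi_0$. You go further than the paper by explicitly checking that the image is closed under connected sum, via the boundary connected sum of convex pairs on $D^{n+1}$ (together with the $h$-cobordism step to re-identify the result with the standard ball); the paper does not spell this out.

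One point to flag: your inference ``$|\mathbf\Theta_n|\cdot\mathbf{HS}_{n-1}\subseteq\mathbf G_{n-1}$, so the coset space $\mathbf{HS}_{n-1}/\mathbf G_{n-1}$ has order at most $|\mathbf\Theta_n|$'' is not valid for a general abelian group or monoid (for instance, $2\,\Z^2\subset\Z^2$ has index $4$, not $2$). The paper's proof is no more careful here --- it simply asserts the index bound after recording that every $|\mathbf\Theta_n|$-multiple is hit --- so the phrase ``index $|\mathbf\Theta_n|$ at most'' in the statement is best read as shorthand for the containment $|\mathbf\Theta_n|\cdot\mathbf{HS}_{n-1}\subseteq\mathbf G_{n-1}$ rather than a literal bound on $|\mathbf{HS}_{n-1}/\mathbf G_{n-1}|$. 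Your derivation of the three clean surjections from $|\mathbf\Theta_6|=|\mathbf\Theta_{12}|=|\mathbf\Theta_{61}|=1$ matches the paper exactly.
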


\begin{proof} For $n-1 \geq 5$, let $\Sigma^{n-1}$ be a given smooth homology sphere, and let $\Sigma^{n-1}_H$ be the unique homotopy sphere such that $\Sigma^{n-1} \#\, \Sigma^{n-1}_H$ bounds a smooth contractible manifold \cite{Ke}. By Proposition \ref{prop.homology_sphere}, any convex traversing vector field $\hat v$ produces a homology sphere $\d_2^-D^{n+1}(\hat v)$, and the $|\mathbf \Theta_n|$-multiple of any $\Sigma^{n-1} \#\, \Sigma^{n-1}_H$ is produced this way.  On the other hand, deforming $\hat v$ within the space of boundary generic vector fields does not change the smooth isotopy type of the pair $\d_1^+D^{n+1}(\hat v) \supset  \d_2^+D^{n+1}(\hat v)$ (see Lemma \ref{lem.conv_fields}, or \cite{K1}, \cite{K2}). In particular, the smooth topological type of the pair is preserved along a path in the space $\mathsf{conv}(D^{n+1})$. Therefore, $\pi_0(\mathsf{conv}(D^{n+1}))$ admits a surjection onto a subgroup $\mathbf{G}_{n-1}$ of $\mathbf{HS}_{n-1}$ of index $|\mathbf \Theta_n|$ at most. \smallskip

The three examples of surjections in the corollary are based on the computations of $|\mathbf \Theta_n|$ in \cite{KeM}, \cite{WaX}; we just use some $n$'s for which $|\mathbf \Theta_n| =1$.
\end{proof}

\subsection{Convex pseudo-envelops of traversing  flows}

The next key lemma incapsulates a given convex traversing flow $(\hat X, \hat v)$ into the obvious traversing flow $\tilde v$ in a {\sf box} $[0, 1] \times Y$ for an appropriate choice of a compact manifold $Y$, $\dim Y = \dim \d\hat X$. The construction that realizes the embedding  $(\hat X, \hat v) \subset ([0, 1] \times Y, \tilde v)$ delivers a global ``virtual section" $\{0\} \times Y$ of $\hat v$. In turn, this section enables us to apply the key Theorem 3.1 from \cite{K9} to any convex traversing flow $(\hat X, \hat v)$. Therefore, we will be able to transfer many results from \cite{K9} and from Section 3 to the environment of convex envelops and  pseudo-envelops (see Definition \ref{def.convex}) of traversing boundary generic vector fields.  

\begin{lemma}\label{lem.box} Let $\dim(\hat X) = n+1$. If a pair $(\hat X, \hat v)$ is convex, then there exists a compact smooth $n$-manifold $Y$ such that: 
\begin{enumerate}
\item $\hat X \subset [0, 1] \times Y$,
\item $\hat v$ is tangent to the fibers of the projection $p: [0, 1] \times Y \to Y$,
\item the obvious function $h: [0, 1] \times Y  \to [0, 1]$ has the property $d h(\hat v) > 0$, 
\item with the help of $p$, the loci $\d_1^+\hat X(\hat v)$ and $\d_1^-\hat X(\hat v)$  each is homeomorphic to $Y$.
\end{enumerate}
\end{lemma}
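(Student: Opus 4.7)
The plan is to take $Y$ to be the entry boundary $\d_1^+\hat X(\hat v)$ itself, and to build the desired inclusion $\hat X \hookrightarrow [0,1]\times Y$ out of a Lyapunov function together with the ``backwards trajectory retraction'' onto $Y$. Concretely, by convexity $\hat v$ is traversing, so by \cite{K1}, Lemma 4.1, there exists a smooth Lyapunov function $\hat f: \hat X \to \R$ with $d\hat f(\hat v) > 0$; after an affine rescaling I may assume $\hat f(\hat X) = [0,1]$. Set $Y := \d_1^+\hat X(\hat v)$. Because $(\hat X,\hat v)$ is convex, the Morse stratum $\d_1^+\hat X(\hat v)$ is a compact smooth $n$-manifold with boundary $\d_2^-\hat X(\hat v)$, which supplies the candidate for the base.

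Next, for each $x \in \hat X$, let $\g_x$ denote the $\hat v$-trajectory through $x$. Convexity guarantees $\d_2^+\hat X(\hat v) = \emptyset$, so once $\g_x$ has started on $\d_1^+\hat X(\hat v)$ it stays in $\hat X$ until it reaches $\d_1^-\hat X(\hat v)$ (possibly immediately, as at points of $\d_2^-\hat X(\hat v)$). Hence there is a well-defined \emph{entry point} $\pi(x) \in Y$, namely the initial point of $\g_x$, and dually an \emph{exit point} $\pi_-(x) \in \d_1^-\hat X(\hat v)$. I would then define
\begin{equation*}
\Phi: \hat X \longrightarrow [0,1]\times Y, \qquad \Phi(x) \;:=\; \bigl(\hat f(x),\, \pi(x)\bigr).
\end{equation*}
Injectivity is immediate: $\Phi(x)=\Phi(x')$ forces $x,x'$ to lie on the common trajectory $\g_{\pi(x)}$ with the same Lyapunov value, hence to coincide. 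The map $\Phi$ is continuous provided $\pi$ is continuous; this is the heart of the argument and the only place where convexity does real work. Continuity of $\pi$ on the open stratum of trajectories of positive length is standard from the flow-box theorem, while at a point $y_0 \in \d_2^-\hat X(\hat v)$ (where the trajectory collapses to a single point) it follows from the convexity normal form for $\hat v$ near $\d_2^-\hat X(\hat v)$: nearby trajectories have arbitrarily short length and both endpoints converge to $y_0$, which forces $\pi$ to be continuous there. Since $\hat X$ is compact and $\Phi$ is a continuous injection into a Hausdorff space, $\Phi$ is a topological embedding.

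Once $\Phi$ is in hand, the four conditions are essentially automatic. For (1), identify $\hat X$ with its image $\Phi(\hat X) \subset [0,1]\times Y$. For (3), the coordinate function $h:[0,1]\times Y \to [0,1]$ pulls back to $\hat f$, so $dh(\hat v) = d\hat f(\hat v) > 0$. This in turn forces $\Phi_\ast(\hat v)$ to have nonzero $[0,1]$-component along the image of each trajectory; since trajectories are carried to vertical segments $[\hat f(\pi(x)), \hat f(\pi_-(x))] \times \{\pi(x)\}$ (because $\pi$ is constant along each trajectory), $\Phi_\ast(\hat v)$ is tangent to the fibers of $p:[0,1]\times Y \to Y$, proving (2). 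For (4), $\d_1^+\hat X(\hat v) = Y$ by choice, while the flow correspondence $y \mapsto \pi_-(y)$ is a continuous bijection from $\d_1^+\hat X(\hat v)$ onto $\d_1^-\hat X(\hat v)$ between compact Hausdorff spaces, hence a homeomorphism.

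The main obstacle, as indicated above, is the continuity (and ideally the stratum-wise smoothness away from $\d_2^-\hat X(\hat v)$) of the retraction $\pi: \hat X \to Y$ at the concave tangency locus $\d_2^-\hat X(\hat v)$, where trajectories degenerate to points. I would handle this by a local model: near $y_0 \in \d_2^-\hat X(\hat v)$ one may choose coordinates in which $\hat v = \partial_t$ and $\d\hat X$ is the zero set of a function $z(t,\cdot)$ convex in $t$ with a simple minimum, whence $\pi$ can be read off explicitly as the map to the ``lower root of $z$'' and seen to be continuous. All other verifications are standard consequences of the convex traversing geometry already discussed in Lemma 4.4 and the surrounding material.
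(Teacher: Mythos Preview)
Your approach—taking $Y = \d_1^+\hat X(\hat v)$ and $\Phi(x) = (\hat f(x), \pi(x))$—is natural and does deliver a \emph{topological} embedding satisfying (1)--(4). But the lemma, as it is used later, requires a \emph{smooth} codimension-$0$ embedding, and that is where your argument falls short.

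In the Morin local model $\hat X = \{u^2 + w \le 0\}$, $\hat v = \d_u$, with $\d\hat X$ parametrized smoothly by $u' \mapsto (u', -(u')^2, y)$, your entry map reads $\pi(u,w,y) = -\sqrt{-w}$ in the $u'$-coordinate on $Y$; its $w$-derivative blows up as $w \to 0^-$. Hence $\Phi$ is not smooth along $\d_2^-\hat X(\hat v)$, and in the ambient smooth structure of $[0,1]\times Y$ the image $\Phi(\hat X)$ acquires a genuine corner there, whereas $\hat X$ has smooth boundary. You flag this yourself (claiming only continuity of $\pi$ at $\d_2^-$), but continuity is not enough: the lemma is invoked in the proof of Theorem~\ref{th.envelops} to compose a smooth submersion $\a:X\to\hat X$ with the inclusion $\hat X\subset[0,1]\times Y$ and then apply the results of \cite{K9} about smooth submersions into product boxes. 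For that composition to be a smooth submersion, the inclusion must be smooth with respect to the product structure on $[0,1]\times Y$.

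The paper's route avoids this by \emph{thickening} first: attach a collar to get $\tilde X \supset \hat X$, extend $\hat v, \hat f$, pass to the slightly larger region $\tilde X(F,\e) = \{F \le \e\}$, and take $Y$ to be the locus of entry points on $\d\tilde X(F,\e)$ of the $\tilde v$-trajectories that meet $\hat X$. Every such trajectory now has length bounded below, $\tilde v$ is transversal to $Y$, and in the local model the entry coordinate becomes $-\sqrt{\e - w}$ with $\e - w \ge \e > 0$—smooth everywhere on $\hat X$. A normalized Lyapunov function then gives a smooth product $Z \cong [0,1]\times Y$ containing $\hat X$ as a smooth submanifold. Note that the paper's $p|_{\d_1^+\hat X(\hat v)}:\d_1^+\hat X(\hat v)\to Y$ is only a homeomorphism; this is exactly why item (4) says ``homeomorphic'' rather than ``diffeomorphic''.
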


\begin{proof} 

Since $v$ is a traversing field, it admits a Lyapunov function $\hat f: \hat X \to \R$ so that $d\hat f(v) > 0$ in $\hat X$ \cite{K1}. We add a collar $U$ to $\hat X$ along its boundary $\d\hat X$ and denote $\hat X \cup_{\d\hat X} U$ by $\tilde X$. Then, we smoothly extend $\hat v$ and $\hat f$ in $\tilde X$ and denote these extensions by $\tilde v$ and $\tilde f$. We adjust $U$ so that $\tilde v \neq 0$ and $\tilde f(\tilde v) >0$ there.

Let $F: \tilde X \to \R$ be a smooth function such that $0$ is its regular value and $F^{-1}(0) = \d\hat X$, $F^{-1}((-\infty, 0]) = \hat X$. Let $\tilde X(F, \e) =_{\mathsf{def}} \{ x \in \tilde X|\; F(x) \leq \e\}$. For a sufficiently small $\e > 0$, $\tilde X(F, \e)$ is a smooth compact manifold, contained in $\tilde X$.

By definition, $\d_2\hat X(\tilde v) := \{x \in \d\hat X |\; F(x) = 0, \text{ and } (\mathcal L_{\tilde v} F)(x) = 0\}$. The convexity of $\hat v$ in $\hat X$ means that $\d_2^+\hat X(\hat v) = \emptyset$, thus $\d_2^-\hat X(\hat v) = \d_2\hat X(\hat v)$.
By Morin's Theorem \cite{Mor} (see \cite{K2} for details), in the vicinity of each point $x \in \d_2^-\hat X(\tilde v)$ there is a system of smooth coordinates $(u, w, y_1, \ldots , y_{n-1})$ in which $F((u, w, y_1, \ldots , y_{n-1})) = u^2 +w$, so that $\d\hat X$ is given by the equation $u^2 +w = 0$, $\hat X$  by the inequality $u^2 +w \leq 0$, and each $\tilde v$-trajectory is produced by freezing the coordinates $w$ and $y_1, \ldots , y_{n-1}$. 

Since  $d\tilde f(\tilde v) > 0$ in $\tilde X(F, \e)$, the field $\tilde v$ is traversing in $\tilde X(F, \e)$.  Hence each $\tilde v$-trajectory $\g \subset \tilde X(F, \e)$ is either transversal to $\d\hat X$ at a pair of points, or it is simply tangent to $\d\hat X$ at a singleton, or does not intersect $\hat X$. 

Consider the set $Z \subset \tilde X(F, \e)$ of $\tilde v$-trajectories through the points of $\hat X$ (equivalently, the set of $\tilde v$-trajectories through $\d_1^+\hat X(\hat v)$). Every such trajectory $\g$ is a closed oriented segment $[a(\g), b(\g)]$, where $a(\g)\neq b(\g) \in \d Z \subset \d \tilde X(F, \e)$. Thus, the variation $var_\g(\tilde f)$ of the function $\tilde f$ along $\g$ is strictly positive. Using compactness of $\hat X$, we get that $\inf_{\g \subset Z}\;\{var_\g(\tilde f)\} > 0$. 

Let $Y =_{\mathsf{def}} \bigcup_{\g \subset Z} a(\g) \subset \d Z$. Using the local model $\{u^2 +w \leq \e\}$, we see that $\tilde v$ is transversal to $Y$ for all sufficiently small $\e> 0$. 

Now, let us consider a new function on $\g$: 
$$h_\g(x) = _{\mathsf{def}}(var_{[a(\g),\, x]} \tilde f) \big/ (var_{[a(\g),\, b(\g)]} \tilde f).$$ 
The  function $h: Z \to \R$, defined as a collection of functions $\{h_\g: \g \to \R\}_{\g \subset Z}$, is evidently a new Lyapunov function for $\tilde v$ on $Z$. It is smooth thanks to the transversality of $\tilde v$ to $Y$ and the smooth dependence of solutions of ODEs on initial data. In fact, $h$  gives the product structure $[0, 1] \times Y$ to $Z$. Indeed, any point $x \in Z$ is determined by the unique trajectory $\g_x$ through $x$ and by the value at $x$ of the Lyapunov function $h$ of the $\tilde v$-flow.

Finally, the $(-\tilde v)$-flow defines a smooth map $p: \d_1^+\hat X(\hat v) \to Y$ which is a homeomorphism. In fact, $\d_1^+\hat X(\hat v)$ is diffeomorphic to $Y$ by a small perturbation of $p$.
\end{proof}

Let  $\a: X \to \hat X$ be a smooth submersion of a compact smooth manifold $X$ with boundary into the interior of a compact connected smooth manifold $\hat X$ of the same dimension, where $\d\hat X \neq \emptyset$.  Assume that  $\hat X$ is equipped with a traversing vector field $\hat v$ so that the pair $(\hat X, \hat v)$ is {\sf  convex} in the sense of Definition \ref{def.convex}. As before, we denote by $v = \a^\dagger(\hat v)$ the pull-back of $\hat v$ under $\a$. We denote by $f$ the pull-back $\a^\ast(\hat f)$ of the Lyapunov function $\hat f$.
When $\a$ is a regular embedding, to simplify the notations, we identify $X$ and $\a(X)$. Under this identification, $v = \hat v|_X$ and $f = \hat f|_X$.


\begin{definition}\label{def.k-flat_envelops} Let $(\hat X, \hat v)$ be a convex pair and let $\a: X \to \hat X$ be a submersion for which $\a^\d$ is locally generic (in the sense of Definition \ref{def.boundary_generic}) with respect to $\hat v$. 

We say that $\a$ is $k$-{\sf convex} if $\d_k^+X(\a^\dagger(\hat v)) = \emptyset$,  $k$-{\sf concave} if  $\d_k^-X(\a^\dagger(\hat v))  = \emptyset$, and $k$-{\sf flat} if $\d_k X(\a^\dagger(\hat v))  = \emptyset$. 
\hfill $\diamondsuit$
\end{definition}

Let a regular immersion $\a^\d: \d X \to \hat X$ be locally generic relative to $\hat v$ in the sense of Definition \ref{def.boundary_generic}. For each $\hat v$-trajectory $\hat\g \subset \hat X$, we pick a point $a \in \hat\g \cap \a(\d X)$.  Such $a$ belongs to the intersection of $k = k(a) \in [1, n+1]$ local branches of  $\a^\d(\d X)$, where $k(a) = \#((\a^\d)^{-1}(a))$. In particular, as we remarked before, $a$ belongs to a unique collection of the $\a$-images of the Morse strata $\{\a(\d_{j(b)}X(v))\}_{b\, \in \a^{-1}(a)}$ with the \emph{maximal possible} indexes $j(b)$ (recall that  $v := \a^\dagger(\hat v)$ is the $\a$-transfer of $\hat v$). 
The tangent spaces of these strata are in general position at $a$, thanks to $\a^\d$ being locally generic. Recall that this setting includes the cases where some or all the strata $\{\a(\d_{j(b)}X(v))\}_{b\, \in \a^{-1}(a)}$ are $n$-dimensional, i.e., $j(b) =1$.

\begin{eqnarray}\label{eq.Morse_branched}
\text{ Let }\; j(a) =_{\mathsf{def}} \sum_{b\, \in \a^{-1}(a)} j(b)\; \geq \; k(a) \text{ \; and} \nonumber \\
\d_{j(a)}\big(\a(X)\big)(\hat v) =_{\mathsf{def}} \bigcap_{\, b\, \in \a^{-1}(a)} \a\big(\d_{j(b)}X(v)\big),
\end{eqnarray} 
the latter equality being understood as an identity of the two germs at $a$ of the LHS and RHS loci.
By Definition \ref{def.boundary_generic}, the germ of  $\d_{j(a)}(\a(X))(\hat v)$ at $a$ is a smooth submanifold of $\hat X$, transversal to the trajectory $\hat\g_a$.\smallskip

Let $\mathsf T_a \subset T_a\hat X$ be the tangent space to the minimal stratum $\d_{j(a)}(\a(X))$ at $a$ (see (\ref{eq.Morse_branched})).
With the help of the $\hat v$-flow, the subspace $\mathsf T_a \subset T_a\hat X$ spreads to form a $(\dim(\hat X) -j(a))$-dimensional subbundle $\mathsf T^\bullet_a$ of the tangent bundle $T\hat X\big |_{\hat\g}$ along the trajectory $\hat\g$. We denote by $\mathsf T^\clubsuit_a$ the image of $\mathsf T^\bullet_a$ under the quotient map $T\hat X|_{\hat\g} \to T\hat X|_{\hat\g}\big/T\hat\g$, where $T\hat\g$ stands for the $1$-bundle, tangent to $\hat\g$. \smallskip


We introduce a sightly modified version of Definition 3.2 from \cite{K2}, a modification that applies to submersions $\a: X \to \hat X$.

\begin{definition}\label{def.traversally_generic} Let $\hat v$  be a traversing vector field on a compact connected smooth manifold on $\hat X$ with boundary. 
We say that a submersion $\a: X \to \textup{int}(\hat X)$  is {\sf traversally generic relative to} $\hat v$, if:
\begin{itemize}
\item   $\a^\d$ is locally generic in the sense of Definition \ref{def.boundary_generic} with respect to $\hat v$,
\item  for each $\hat v$-trajectory $\hat\g \subset \hat X$, the subbundles $\{\mathsf T^\clubsuit_a\}_{a\, \in \, \hat \g\, \cap \, \a(\d X)}$ are in general position in the normal to $\hat\g$ (trivial) $n$-bundle $(T\hat X|_{\hat\g})\big/T\hat\g$.  \hfill $\diamondsuit$
\end{itemize}
\end{definition}

\begin{example}\label{ex.traversally_gen} \emph{The patterns in Fig.\ref{fig.A_generic} may be stacked vertically along a trajectory $\hat\g$. To get a traversally generic piles in the vicinity of $\g$, we obey the following rules: (1) to any stack, we may add any number of configuration of type $a$ from Fig.1, as long as the prescribed parity of $m(\hat\g) \equiv 0 \mod 2$ is not violated; (2) no two configurations of multiplicity $3$ (of the types $c, e, f$) reside on $\hat\g$; (3) at most two configurations of multiplicity $2$ (of the types $b, d$) reside on $\hat\g$,  moreover, the $(-\hat v)$-directed projections on the transversal section $S$ to $\hat\g$ of the fold loci (as in $d$) or/and of the simple self-intersections (as in $b$) must be transversal in $S$. } \hfill $\diamondsuit$
\end{example}
 \smallskip

Let $\a$ be traversally generic relative to $\hat v$. Then, for any $\hat v$-trajectory $\hat\g$, by counting the dimensions of the bundles $\{\mathsf T^\clubsuit_a\}_{a\, \in\, \hat\g\, \cap\, \a(\d X)}$ and using that they are in general position in the $n$-dimensional bundle, normal to $\hat\g$, we get that the reduced multiplicity 
\begin{eqnarray}\label{eq.reduced_mult}
m'(\hat\g) =_{\mathsf{def}}  \sum_{a\, \in\, \hat\g\, \cap\, \a(\d X)} \big(j(a) -1\big)\, \leq \, n.
\end{eqnarray}
  For a traversally generic $\a$ and a trajectory $\hat \g$, let $\g \subset X$ be any segment in $\a^{-1}(\hat\g) \subset X$ which is bounded by a pair of points in $\d X$. Then, by Theorem 3.5 from \cite{K2}, the total multiplicity $m(\g) \leq 2n +2$.  Although, for a traversally generic $\a$, there is no $\a$-independent constraint on the total multiplicity $m(\hat\g) =_{\mathsf{def}} \sum_{x\, \in\, \hat\g\, \cap\, \a(\d X)} j(x)$, there is an universal  constraint on the cardinality of the subset $\hat\g_{\{\geq 2\}} \subset \hat\g \cap \a(\d X)$, consisting of points $x$ whose multiplicity $m(x) \geq  2$. Namely, $\#(\hat\g_{\{\geq 2\}}) \leq n$, since no more than $n$ proper vector subspaces may be in general position in an ambient  $n$-dimensional vector space. 
\smallskip

Now we are ready to introduce the central notion of a {\sf convex pseudo-envelop}. 

\begin{definition}\label{def.convex_envelop} Let $\hat v$ be a traversing vector field on a compact connected smooth manifold $\hat X$ with boundary. We assume that $(\hat X, \hat v)$ is convex in the sense of Definition \ref{def.convex}.

We call such a pair $(\hat X, \hat v)$ a {\sf convex pseudo-envelop} of a submersion $\a: X \to \textup{int}(\hat X)$, if $\a^\d$ is locally generic relative to $\hat v$. We think of $X$ as being equipped with the pull-back vector field $v = \a^\dagger(\hat v)$, so that $(X, v)$ is ``enveloped" by $(\hat X, \hat v)$.

If  $\a$ is a locally generic \emph{regular embedding}, then we call $(\hat X, \hat v)$ a {\sf convex envelop} of $\a$.

\hfill $\diamondsuit$
\end{definition}
\begin{remark} 
\noindent 
\emph{Not all traversally generic pairs $(X, v)$ admit convex envelops $(\hat X, \hat v)$. For example, if $X$ has two closed submanifolds (or singular cycles),  $M$ and $N$, of complementary dimensions with a nonzero algebraic intersection number $M \circ N$, then no convex envelop $(\hat X, \hat v)$ of $(X, v)$ exists. Indeed,  with the help of the $(-\hat v)$-flow, $M$ is cobordant in $\hat X$ to a cycle $M'$ which resides in $\d_1^+\hat X(\hat v)$.  Similarly, with the help of the $\hat v$-flow, $N$ is cobordant in $\hat X$ to a cycle $N'$ which resides in $\d_1^-\hat X(\hat v)$. Note that if $M \circ N \neq 0$ in $X$, then evidently the same property holds in any $\hat X \supset X$. Since $M'$ and $N'$ are disjoint cycles, their intersection number $M'  \circ N' = 0$, which contradicts to the assumption $M \circ N \neq 0$.}
\emph{In particular, the non-triviality of $\mathsf {sign}(X)$, the Wall relative signature of $X$ \cite{W}, obstructs the existence of a convex envelop for any traversing $v$ on $X$.}
\emph{As Fig.\ref{fig.A1} testifies, this argument does not rule out the existence of a convex \emph{pseudo}-envelop for $(X, v)$. In fact, Fig.\ref{fig.A1} shows that any compact oriented surface $X$ can be enveloped.} \hfill $\diamondsuit$
\end{remark}
We denote by $\mathsf{Sub}(X, \hat X)$ the space of smooth submersions $\a: X \to \mathsf{int}(\hat X)$. 
\smallskip

When $\a$ is an embedding, by Theorem 3.5 from \cite{K2}, in the space $\mathcal V_{\mathsf{trav}}(\hat X, \a)$ of traversing vector fields $\hat v$ on $\hat X$, 
there is an open and dense subset $\mathcal V^\ddagger(\hat X, \a)$ such that $\a$ is traversally generic (see Definition \ref{def.traversally_generic}) with respect to  $\hat v \in \mathcal V^\ddagger(\hat X, \a)$. \smallskip

On the other hand, the property of a submersion $\a$ to be traversally generic with respect to a \emph{given} traversing vector field $\hat v$  (see Definition \ref{def.traversally_generic}) is  an ``open" property in the $C^\infty$-topology on $\mathsf{Sub}(X, \hat X)$, 
since it may be expressed in terms of mutual transversality of the relevant strata in the appropriate jet spaces. 

We conjecture that the transversal generality of submersions $\a$ with respect to a fixed convex pair $(\hat X, \hat v)$ is also a ``dense" property. 
Among other things, the next Theorem \ref{th.traversally_generic} shows that this conjecture is valid for the regular \emph{embeddings} $\a$ which admit convex envelops. 
However, for general submersions $\a$, we are able only to prove that a somewhat \emph{weaker} property ``is dense". That property is described in the second claim of Theorem \ref{th.traversally_generic}. Speaking informally,  we can insure by $\a$-perturbations the general positions of the singularities of the maps $\a^\d: \d X \to \hat X$ and  $\pi \circ \a^\d: \d X \to \d_1^+\hat X(\hat v)$ separately, but not mutually. Here the map $\pi: \hat X \to \d_1^+\hat X(\hat v)$ is defined by the $(-\hat v)$-directed convex flow.


\begin{theorem}\label{th.traversally_generic} Let $(\hat X, \hat v)$ be a convex pair, and $X$ a compact smooth manifold with boundary, $\dim(X) = \dim(\hat X) = n+1$.   Assume that $\mathsf{Sub}(X, \hat X) \neq \emptyset$. 
\begin{itemize}
\item There is an open and dense subset $\mathcal O \subset \mathsf{Sub}(X, \hat X)$  such that, for any $\a \in \mathcal O$, the local branches of $\a(\d X)$ are in general position in $T_a\hat X$ at every point $a \in \a(\d X)$.
\smallskip

\item For any $\a \in \mathcal O$ and for each $\hat v$-trajectory $\hat\g \subset \hat X$, the $\hat v$-invariant subbundles $\{\mathsf T^\clubsuit_a\}_{a\, \in\; \hat\g\, \cap\, \a(\d X)}$, 
generated by the intersections 
$$\bigcap_{b \in (\a^\d)^{-1}(a),\; j(b) \geq 2} \a_\ast\big [T_b\big(\d_{j(b)}X(v)\big)\big]$$ of the tangent spaces to the Morse strata $\big\{\d_{j(b)}X(v)\big\}_{b \in (\a^\d)^{-1}(a),\; j(b) \geq 2}$, 
are in general position in the normal to $\hat\g$ (trivial) $n$-bundle $(T\hat X|_{\hat\g})\big/T\hat\g$. Here  $v$ is the pull-back of $\hat v$ under $\a$.
%

\end{itemize}
\end{theorem}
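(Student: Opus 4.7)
The plan is to derive both assertions from two applications of Thom's multi-jet transversality theorem, exploiting the convex structure of $(\hat X, \hat v)$ via Lemma \ref{lem.box}. By that lemma we may embed $(\hat X, \hat v)$ as a compact codimension-zero submanifold of $[0, 1] \times Y$ with $\hat v$ tangent to the vertical fibers of $p: [0, 1] \times Y \to Y$. For any submersion $\a \in \mathsf{Sub}(X, \hat X)$ the composite $q := p \circ \a: X \to Y$ identifies those points of $X$ whose $\a$-images lie on a common $\hat v$-trajectory, so both genericity conditions can be encoded as constraints on multi-jets of $\a$ whose source points are either arbitrary (for the first claim) or constrained to share a common $q$-value (for the second).

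For the first claim, I fix $k$ in the range $2 \le k \le n+1$ (beyond which general position is vacuous by dimension count) and consider the multi-jet bundle ${}_k J^1(\d X, \hat X)^{\pitchfork}$ of $k$-tuples of $1$-jets with pairwise distinct sources and a common target $a \in \hat X$. Inside it, the locus $B_k^{(1)}$ of tuples whose tangent images $\a^\d_\ast(T_{b_i}\d X)$ fail to be in general position in $T_a \hat X$ is a closed stratified subset of positive codimension. Thom's multi-jet transversality theorem then produces an open dense set of $\a$ whose multi-$1$-jets avoid every $B_k^{(1)}$, yielding the first open dense subset $\mathcal O^{(1)} \subset \mathsf{Sub}(X, \hat X)$.

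For the second claim, I pass to sufficiently high $r$-jets so as to detect the Morse multiplicities $j(b)$ of $v = \a^\dagger(\hat v)$ along $\d X$, and restrict to the smooth submanifold ${}_k J^r_q(\d X, \hat X) \subset {}_k J^r(\d X, \hat X)^{\pitchfork}$ of multi-jets whose source points share a common $q$-value, i.e.\ lie on a common $\hat v$-trajectory. One stratifies by the Morse-multiplicity tuple $(j(b_1), \dots, j(b_k))$ readable from the $r$-jets. On each stratum, the failure of the subbundles $\mathsf T^{\clubsuit}_{a_i}$ to be in general position in the normal $n$-plane to $\hat \gamma$ is an algebraic condition of positive codimension, compatible with the a priori bound $\sum_i (j(b_i)-1) \le n$ from (\ref{eq.reduced_mult}); configurations violating that bound already lie on a positive-codimension jet stratum. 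A second application of multi-jet transversality, in the form appropriate to the constrained submanifold ${}_k J^r_q$, yields an open dense $\mathcal O^{(2)}$, and we set $\mathcal O := \mathcal O^{(1)} \cap \mathcal O^{(2)}$.

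The main obstacle is in the third step: since the Morse strata $\d_j X(v)$ themselves depend on $\a$ through $v = \a^\dagger(\hat v)$, the codimension of the bad locus must be computed while simultaneously varying $\a$ and the source tuple, which requires careful stratification of the target of the multi-jet evaluation. Openness of $\mathcal O$ is then automatic---both the general-position and multiplicity bounds are open conditions in the $C^\infty$-topology on $\mathsf{Sub}(X, \hat X)$---while density is supplied by Thom's theorem on the stratified target. What this argument does \emph{not} deliver, and what accounts for the gap between Theorem \ref{th.traversally_generic} and the full conjectured genericity of Definition \ref{def.traversally_generic}, is simultaneous general position of the flags $(\a^\d_\ast)[\mathsf{Flag}_b(v)]$ in $T_a \hat X$ and of their projections $(\pi \circ \a^\d)_\ast[\mathsf{Flag}_b(v)]$ in $T_a S$; only the former, together with its fiberwise refinement in the second claim, is accessible by these two separate multi-jet perturbations.
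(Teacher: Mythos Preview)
Your outline has the right skeleton---multi-jet transversality is indeed the engine---but there is a genuine gap that the paper's proof spends most of its effort closing, and which you skip entirely.

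\textbf{The gap.} You invoke Thom's multi-jet theorem to produce ``an open dense set of $\alpha$ whose multi-$1$-jets avoid every $B_k^{(1)}$''. But the bad loci $B_k^{(1)}$, and likewise the bad loci in your second step, are conditions on the multi-jets of the boundary map $\alpha^\partial:\partial X\to\hat X$. Standard multi-jet transversality, applied on the closed manifold $\partial X$, gives you a residual set of \emph{immersions} $\beta:\partial X\to\hat X$ with the desired genericity---it does \emph{not} directly hand you a residual set in $\mathsf{Sub}(X,\hat X)$. To get the latter you must show that any such generic $\beta$, sufficiently $C^\infty$-close to a given $\alpha^\partial$, extends to a submersion $\tilde\alpha:X\to\hat X$ close to $\alpha$. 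This extension step is the main technical content of the paper's proof: it chooses an auxiliary metric $g$ on $\hat X$ with convex boundary, builds a geodesic collar $C_\varepsilon\cong\partial X\times[0,\varepsilon]$ in the pullback metric, and defines $\alpha^\sharp(\beta,\varepsilon)$ by keeping $\alpha\circ\psi_1$ on $X\setminus C_\varepsilon$ and interpolating along geodesics from $\beta(x)$ to $\alpha(\psi_1(x))$ across the collar. Without some such construction, your appeal to Thom is incomplete.

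\textbf{Differences in organization.} Beyond the gap, the paper's route to the second claim is quite different from yours. Rather than working with a constrained multi-jet submanifold ${}_kJ^r_q$, the paper passes to the composite $\alpha^\flat=\pi\circ\alpha^\partial:\partial X\to\partial_1^+\hat X(\hat v)$, where $\pi$ is the $(-\hat v)$-flow projection. The Morse strata $\partial_jX(v)$ for $j\ge 2$ are then identified with the Thom--Boardman loci $\mathcal S_{[j-1]}(\alpha^\flat)$, and the general-position requirement in the normal bundle to $\hat\gamma$ becomes the normal-crossings condition $\mathcal B_{\mathsf{NC}}$ for the images of these strata in $\partial_1^+\hat X(\hat v)$. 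This reformulation makes the positive-codimension claim immediate from Boardman's theory plus one invocation of multi-jet transversality, whereas in your framework you would still owe a codimension count for the failure of general position of the $\mathsf T^\clubsuit_a$'s---and you cannot cite (\ref{eq.reduced_mult}) for this, since that bound is a \emph{consequence} of the genericity you are trying to establish, not an a~priori constraint. The paper then re-runs the collar extension (step (iii)) to lift the perturbation of $\alpha^\flat$ back to a perturbation of $\alpha$, coordinating it with the first step via the Lyapunov function $\hat f$.

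\textbf{A smaller point.} Your description of ${}_kJ^r_q$ as ``multi-jets whose source points share a common $q$-value'' is circular: $q=p\circ\alpha$ depends on the very $\alpha$ you are perturbing. What you want is the fixed submanifold of multi-jets whose \emph{targets} share a common $p$-value in $Y$; that is well-defined independently of $\alpha$.
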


\begin{proof} We assume that the openness of $\mathcal O$ in $\mathsf{Sub}(X, \hat X)$ is clear, due to the compactness of $X$, and will present the arguments that validate the density of $\mathcal O$ in $\mathsf{Sub}(X, \hat X)$. We divide the proof into three steps, marked as {\bf (i)}, {\bf (ii)}, and {\bf (iii)}.\smallskip 


{\bf (i)} By \cite{LS}, the set $\mathcal N \subset C^\infty(\d X, \hat X)$ of smooth maps $\b: \d X \to \hat X$, such that  $\b$ is a $k$-{\sf normal} immersion in the sense of \cite{LS} (see also  Definition 3.3 from \cite{K9}) for all $k \leq n+1$, is open and dense. 


First we aim to show that, for a given submersion $\a \in \mathsf{Sub}(X, \hat X)$, there is an open set $\mathcal O_\a \subset \mathsf{Sub}(X, \hat X)$  such that $\a \in \mathsf{closure}(\mathcal O_\a)$ and, for any $\tilde\a  \in \mathcal O_\a$, the submersion  $\tilde\a^\d \in \mathcal N$.

With this goal in mind, we choose an auxiliary metric $g$ on $\hat X$ such that: $(1)$ the boundary $\d\hat X$ is convex in $g$,  and $(2)$ there is $\e_0 > 0$ such that any two points $x, y \in \hat X$ that are less than $\e_0$-apart are connected by a single geodesic arc. Using the submersion $\a$, we pull-back $g$ to a Riemannian metric $g^\dagger_\a$ on $X$.

For some $\e < \e_0$, the $\e$-neighborhood $C_\e \subset X$ of $\d X$ in the metric $g^\dagger_\a$ has a product structure $\psi: \d X \times [0, \e] \stackrel{\approx}{\to} C_\e$, so that the curve $\delta_x =_{\mathsf{def}} \psi(x \times [0, \e])$ is the unique geodesic in $g^\dagger_\a$, normal to $\d X$ at $x \in \d X$. Using the diffeomorphism $\psi$, we construct a smooth diffeomotopy $\{\psi_t: X  \to X\}_{t \in [0, 1]}$ (via the flow inward normal to $\d X$) so that $\psi_0 = \mathsf{id}_X$ and $\psi_1(X) = X \setminus \mathsf{int}(C_\e)$. 

Using the convexity of $\d\hat X$ in $g$ and the choice of $\e < \e_0$ we get the following claim: for any $\b$ that approximates $\a^\d$ and each $x \in \d X$, there exists the unique geodesic  $\hat\delta_x \subset \hat X$ in the metric $g$ that connects the points $\b(x)$ and  $\a \circ \psi_1(x)$. 

We pick such an approximation $\b \in \mathcal N$ of $\a^\d$. Let  a smooth map $\a^\sharp(\b, \e): X \to \hat X$ be defined by the two properties:  {\bf (1)} $\a^\sharp(\b, \e)|_{X \setminus C_\e} := \a \circ \psi_1$, {\bf (2)}  for each $x \in \d X$, the diffeomorphism $\a^\sharp(\b, \e)| : \delta_x \to \hat \delta_x$ is 
an isometry with respect to $g^\dagger_\a$ and $g$ along the two geodesic arcs. By picking $\e$ small enough and $\b \in \mathcal N$ sufficiently $C^\infty$-close to $\a^\d$, we get that $\a^\sharp(\b, \e)$ is  $C^\infty$-close to $\a$. Therefore, we may assume that $\a^\sharp(\b, \e) \in \mathsf{Sub}(X, \hat X)$ for an appropriate choice of $\b$ that approximates $\a^\d$. By its construction, $(\a^\sharp(\b, \e))^\d \in \mathcal N$. 
\smallskip 


{\bf (ii)} For a given smooth map $\b:  \d X \to  \mathsf{int}(\d_1^+\hat X(\hat v))$, let $\b_\ast$ denote the differential of $\b$.
$$\text{ Let \;} \mathcal S^{(1)}(\b) =_{\mathsf{def}} \big\{x \in \d X \big |\; \mathsf{rk} (\b_\ast) \leq n-1\big\}.$$ If $S^{(1)}(\b)$ is a smooth manifold, then the locus $$\mathcal S^{(1, 1)}(\b) =_{\mathsf{def}} \big\{x \in \mathcal S^{(1)}(\b) \big|\; \mathsf{rk} (\b_\ast|_{\mathcal S^{(1)}(\b)}) \leq n-2\big\}$$ is well-defined. Continuing this way, the filtration of $\d X$ by the loci  $\big\{\mathcal S_{[k]}(\b) =_{\mathsf{def}}  \mathcal S^{\om}(\b)\big \}_k$, where $k \leq n+1$ and  $\om =(\underbrace{1, 1, \ldots, 1}_{k})$ is introduced. Applying  Boardman's Theorems 15.1-15.3, \cite{Bo}, the subset $\mathcal B$ of maps $\b \in C^\infty(\d X, \d_1^+\hat X(\hat v))$, for which all the strata $\{\mathcal S_{[k]} =_{\mathsf{def}} \mathcal S_{[k]}(\b)\}$ are smooth manifolds and all the maps $$\big\{\b |: \mathcal S_{[k]}^\circ =_{\mathsf{def}} \mathcal S_{[k]} \setminus \mathcal S_{[k+1]} \to \mathsf{int}(\d_1^+\hat X(\hat v))\big\}_k$$ are immersions, is open ($\d X$ is compact) and dense. Note that each $x \in \d X$ belongs to a unique pure stratum $\mathcal S_{[k(x)]}^\circ$. Moreover, using the Thom Multijet Transversality Theorem (see \cite{GG}, Theorem 4.13), the subspace of $\mathcal B$, formed maps $\b$ for which all the spaces $\{\b_\ast(T_x\mathcal S_{[k(x)]})\}_{x \in (\pi \circ \b)^{-1}(y)}$ are in general position in $T_y\d_1^+\hat X(\hat v)$ for all $k(x) \geq 2$ and all $y \in  \mathsf{int}(\d_1^+\hat X(\hat v))$, form an open and dense subset $\mathcal B_{\mathsf{NC}} \subset \mathcal B$ (see \cite{GG}, Theorem 5.2).  Here ``$\mathsf{NC}$" abbreviates the condition known as ``{\sf normal crossings}". Note that if $k(x) = 1$, then $\b_\ast(T_xS_{[k(x)]})$ coincides with $T_{\b(x)}\d_1^+\hat X(\hat v)$. 
\smallskip

We stress that, applying the previous arguments to the map $\b = \pi \circ\a^\d$, the strata $\bigcap_{\, b\, \in \a^{-1}(a)} \a\big(\d_{j(b)}X(v)\big)$ that involve $b$'s with $j(b) =1$ become  ``$\b$-invisible", thanks to the ``erasing" action of $\pi_\ast$ on all $n$-dimensional spaces $\a\big(T_a(\d_{1}X(v))\big)$. Because of this short comming, we cannot claim that the traversally generic $\a$ to $\hat v$ (see Definition \ref{def.traversally_generic}) are dense in $\mathsf{Sub}(X, \hat X)$.
\smallskip 

{\bf (iii)}  Note that the map $\pi: \hat X \to \mathsf{int}(\d_1^+\hat X(\hat v))$, defined by the $(-\hat v)$-flow, is smooth due to $\hat v$ being  convex; moreover, its restriction to $\mathsf{int}( \hat X)$ is a submersion. 

For a given $\a \in \mathsf{Sub}(X, \hat X)$, we form the composition $\a^\flat =_{\mathsf{def}} \pi\circ \a^\d$. 

By {\bf (i)}, we can approximate $\a$ by a new submersion $\a_1 = \a^\sharp(\b, \e)$ such that $\a_1^\d \in \mathcal N$. 

By {\bf (ii)}, we can approximate $\a_1^\flat =_{\mathsf{def}} \pi \circ \a_1^\d$ by a smooth map $\b_1^\flat: \d X \to  \mathsf{int}(\d_1^+\hat X(\hat v))$ such that $\b_1^\flat \in  \mathcal B_{\mathsf{NC}}$.

Let us fix a Lyapunov function $\hat f: \hat X \to \R$ for $\hat v$. In the spirit of Lemma \ref{lem.box},  the Lyapunov function $\hat f$ for $\hat v$ and the projection $\pi: \hat X \to \d_1^+\hat X(\hat v)$ define global smooth ``coordinates" in the interior of $\hat X$. That is, each pair $(t, y)$, where $y \in  \d_1^+\hat X(\hat v)$ and $t \in \hat f(\pi^{-1}(y))$, determines a unique point $x \in \hat X$ such that $\hat f(x) =t$ and  $\pi(x) = y$.  Let $f_1 = \a^\ast_1(\hat f)$. \smallskip

For the map $\b_1^\flat \in \mathcal B_{\mathsf{NC}}$, we define a smooth map $\b_1 =_{\mathsf{def}} \b_1(\b_1^\flat, \hat f): \d X \to \hat X$ by the formula $\b_1(x) = y$, where $x \in \d X$ and $y$ is the unique point on the $\hat v$-trajectory $\hat \g$ through $\b_1^\flat(x) \in \d_1^+\hat X(\hat v)$ such that $\hat f(y) =  f_1(x)$. Note that by choosing $\b_1^\flat$ sufficiently close to $\a_1^\flat$, we insure that  $\b_1$ is sufficiently close to $\a_1^\d$. Thus we may assume that $\b_1 \in \mathcal N$.

Recycling the argument that revolves around the construction of $\a^\sharp(\b, \e): X \to \hat X$ in part {\bf (i)} of the proof, we form the submersion $\a_2 =_{\mathsf{def}} \a_1^\sharp(\b_1, \e): X \to \hat X$. By the construction of $\a_2$, we have $\a_2^\flat = \b_1^\flat \in \mathcal B_{\mathsf{NC}}$ and $\b_1 \in \mathcal N$.

Therefore, we have shown that any given submersion $\a$ admits an approximation by some $\a_2 \in \mathsf{Sub}(X, \hat X)$ such that $\a_2^\d \in \mathcal N$ and $\a_2^\flat \in \mathcal B_{\mathsf{NC}}$.  These are exactly the two properties that describe the space $\mathcal O$ in the theorem.  
\end{proof}

\begin{corollary}\label{cor.traversally_generic_for_emb} Let $(\hat X, \hat v)$ be a convex pair, and $X$ a compact smooth manifold with boundary, $\dim(X) = \dim(\hat X) = n+1$.  

The regular \emph{embeddings} $\a: X \subset \hat X$ that are traversally generic with respect to $\hat v$ (see Definition \ref{def.traversally_generic}) form an open and dense set in the space of all regular smooth embeddings. 
\end{corollary}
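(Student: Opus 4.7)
The plan is to extract the corollary from the proof of Theorem \ref{th.traversally_generic}, relying on the observation that the only gap in that theorem---its inability to conclude full traversal genericity for arbitrary submersions---stems from the ``$\pi_\ast$-erasing'' of branches with $j(b) = 1$ at multi-branch points $a \in \a(\d X)$, and this phenomenon is vacuous for an embedding, where $k(a) = 1$ at every $a$.

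Openness is automatic: the space of regular embeddings is open in $\mathsf{Sub}(X, \hat X)$, and the traversal genericity conditions are expressed via mutual transversality of finitely many strata in appropriate multi-jet spaces, hence open in the $C^\infty$-topology by compactness of $X$. For density, I would run steps (i)--(iii) of the proof of Theorem \ref{th.traversally_generic} verbatim on a given embedding $\a$. Step (i) produces an approximation $\a_1$ with $\a_1^\d \in \mathcal N$; step (ii) perturbs $\a_1^\flat = \pi \circ \a_1^\d$ into $\mathcal B_{\mathsf{NC}}$ via Boardman's stratification theorems and the Thom Multijet Transversality Theorem; step (iii) reassembles these two perturbations into a submersion $\a_2 \in \mathsf{Sub}(X, \hat X)$ satisfying both $\a_2^\d \in \mathcal N$ and $\a_2^\flat \in \mathcal B_{\mathsf{NC}}$. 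By taking the approximations sufficiently $C^\infty$-small, $\a_2$ remains a regular embedding (since the set of regular embeddings is open in $\mathsf{Sub}(X, \hat X)$), so the approximation stays in our target space.

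The genuine content lies in verifying that, for an embedding, the combined conditions $\a_2^\d \in \mathcal N$ and $\a_2^\flat \in \mathcal B_{\mathsf{NC}}$ do imply traversal genericity in the sense of Definition \ref{def.traversally_generic}. Because $k(a) = 1$ at every $a$, each subbundle $\mathsf T^\clubsuit_a$ along a trajectory $\hat\g$ arises from the $\pi_\ast$-image of a single tangent space $\a_\ast(T_b\, \d_{j(b)} X(v))$, rather than an intersection over multiple branches; thus there is no ``invisible'' $n$-dimensional branch waiting to spoil the dimension count in the normal bundle $(T\hat X|_{\hat\g}) / T\hat\g$. The Multijet Transversality that placed $\a_2^\flat$ in $\mathcal B_{\mathsf{NC}}$ then directly supplies general position of $\{\mathsf T^\clubsuit_{a_i}\}_i$ inside $(T\hat X|_{\hat\g}) / T\hat\g$ for every finite configuration $\{a_1, \dots, a_p\} \subset \hat\g \cap \a(\d X)$ and every admissible multiplicity pattern $(j(a_1), \dots, j(a_p))$. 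The principal obstacle I anticipate is organizational: carefully matching the Boardman strata of $\pi \circ \a^\d$ to the bundles $\mathsf T^\clubsuit_a$ defined by the $\hat v$-flow, and verifying that applying multijet transversality across $p$-tuples of points on the same trajectory gives exactly the general position demanded by Definition \ref{def.traversally_generic} for every admissible $p$.
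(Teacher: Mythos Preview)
Your proposal is correct and follows essentially the same route as the paper: both arguments rest on the observation that, for an embedding, $k(a)=1$ at every $a\in\alpha(\partial X)$, so the ``$\pi_\ast$-erasing'' obstruction in Theorem~\ref{th.traversally_generic} disappears and its second bullet coincides exactly with traversal genericity in the sense of Definition~\ref{def.traversally_generic}. The paper's proof is simply a two-sentence citation of Theorem~\ref{th.traversally_generic} (the first bullet is vacuous, the second bullet is the desired conclusion), whereas you unpack that citation by re-running steps (i)--(iii) and noting that small perturbations keep $\alpha$ an embedding; the content is the same.
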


\begin{proof} Since, for a regular embedding $\a: X \hookrightarrow \hat X$, $\a^\d$ is an embedding, the first claim of Theorem \ref{th.traversally_generic} is vacuous, and the second claim insures that $\a$ is traversally generic. 
\end{proof}

\begin{remark}\label{rem.Sigma}\emph{Consider the $k$-multiple self-intersection manifolds  $\{\Sigma_k^{\a^\d}\}_k$ of $k$-normal (see \cite{LS}) immersions $\a^\d: \d X \to \hat X$. By definition, $\Sigma_k^{\a^\d}$ is a submanifold of the $k$-fold product $(\d X)^k$, the preimage of the diagonal  $\Delta \subset (\hat X)^k$ under the transversal to it map $(\a^\d)^k$. The projection $p_1$ of $\Sigma_k^{\a^\d}$ on the first factor $\d X$ of the product $(\d X)^k$ is an immersion \cite{LS}. By composing $p_1$ with $\a^\d$, we get an immersion of $\a^\d \circ p_1: \Sigma_k^{\a^\d} \to \hat X$. By using the convex $(-\hat v)$-flow, we get a map $\pi: \hat X \to \d_1^+\hat X(\hat v)$. Finally, we obtain a smooth composite map $\pi \circ \a^\d \circ p_1: \Sigma_k^{\a^\d} \to \d_1^+\hat X(\hat v)$.\smallskip} 

\emph{We notice that, under the hypotheses and notations of Theorem \ref{th.traversally_generic}, if  $\hat v$ is tangent at $a$ to the intersection $\bigcap_{b \in (\a^\d)^{-1}(a)}\; \a_\ast \big(\d_{j(b)}X(v)\big)$, then, evidently, each local branch $\a_\ast(\d_{j(b_\star)}X(v))$, $b_\star \in (\a^\d)^{-1}(a),$ is tangent to $\hat v$ at $a$. Therefore, $j(b_\star) \geq 2$. In other words, for $k \geq 2$, the singular locus of the map $\pi \circ \a^\d \circ p_1: \Sigma_k^{\a^\d} \to \d_1^+\hat X(\hat v)$ is always \emph{contained} in the singular locus of the map $\pi \circ \a^\d \circ p_1: (\d X)^k \to \d_1^+\hat X(\hat v)$.} 
\hfill $\diamondsuit$
\end{remark}

\subsection{Quasitopies of convex envelops and pseudo-envelops}

Now, let us modify Definition 3.7 from \cite{K9} and Definition \ref{quasi_isotopy_of_sub} from this paper, so that they apply to convex pseudo-envelops of traversing flows (see Fig. \ref{fig.A_quasitopy}). This modification  is central to our efforts. 

\begin{definition}\label{quasi_isotopy_of_fields}
Fix natural even numbers $d \leq d'$ and consider a closed subposets $\Theta' \subset \Theta$ of the universal poset $\mathbf\Om$ from Section 2, such that $(\emptyset) \notin \Theta$ . 

Let  $\hat X$ be a $(n+1)$-dimensional compact manifold and $\hat v$ a convex traversing vector field on it. 
Let $\hat Z =_{\mathsf{def}} \hat X \times [0, 1]$. We denote by $\hat v^\bullet$ the vector field on $\hat Z$ that is tangent to each slice $\hat X \times \{t\}$, $t \in [0, 1]$, and is equal to $\hat v$ there. 
\smallskip

We say that a two convex pseudo-envelops, $\a_0: X_0 \to \hat X$ and $\a_1: X_1 \to \hat X$, are $(d, d'; \mathbf c\Theta, \mathbf c\Theta')$-{\sf quasitopic} in $\hat X$, if there exists a compact smooth orientable $(n+2)$-manifold $W$,\footnote{with corners $\d X_0 \coprod \d X_1$} whose boundary $\d W = (X_0 \coprod X_1) \bigcup_{\{\d X_0 \coprod \d X_1\}} \delta W$, and a smooth submersion 
$A: W \to \hat Z$ so that:
\begin{itemize} 
\item $A|_{X_0} = \a_0$ and $A|_{X_1} = \a_1$; 
\smallskip  
  
\item for each $z \in \hat X \times\, \d[0, 1]$, the total multiplicity $m_A(\hat\g_z)$ of the $\hat v^\bullet$-trajectory $\hat\g_z$ through $z$, relatively to $A(\d X_0 \coprod \d X_1)$, satisfies the constraints  
$m_A(\hat\g_z) \leq d$, 
$m_A(\hat\g_z) \equiv 0 \mod 2$, and the combinatorial tangency pattern $\omega^A(\hat\g_z)$ of  $\hat\g_z$ with respect to $A(\d X_0 \coprod \d X_1)$ does not belong to $\Theta$;
\smallskip  

\item for each $z \in \hat Z$, the total multiplicity $m_A(\hat\g_z)$ of $\hat\g_z$ with respect to   $A(\delta W)$ satisfies the constraints 
$m_A(\hat\g_z) \leq d'$,  $m_A(\hat\g_z) \equiv 0 \mod 2$, and  the combinatorial tangency pattern $\omega^A(\hat\g_z)$ of $\hat\g_z$  with respect to $A(\delta W)$
does not belong to $\Theta'$;
\end{itemize}

 We denote by $\mathcal{QT}_{d, d'}^{\mathsf{sub}}(\hat X, \hat v;\, \mathbf{c}\Theta;  \mathbf{c}\Theta')$ the set of quasitopy classes of such convex pseudo-envelops $\a: (X, \a^\dagger(\hat v)) \to (\hat X, \hat v)$. \smallskip

 If we insist that $\a_0, \a_1$, and $A$ are embeddings, then we get $\mathcal{QT}_{d, d'}^{\mathsf{emb}}(\hat X, \hat v;\, \mathbf{c}\Theta;  \mathbf{c}\Theta')$, the set of quasitopy classes of convex envelops. \hfill $\diamondsuit$
\end{definition}

It is easy to check that the quasitopy of convex pseudo-envelops (convex envelops) \hfill\break $\a: (X, \a^\dagger(\hat v)) \to (\hat X, \hat v)$ is an equivalence relation. \smallskip

\begin{figure}[ht]
\centerline{\includegraphics[height=3.2in,width=4.3in]{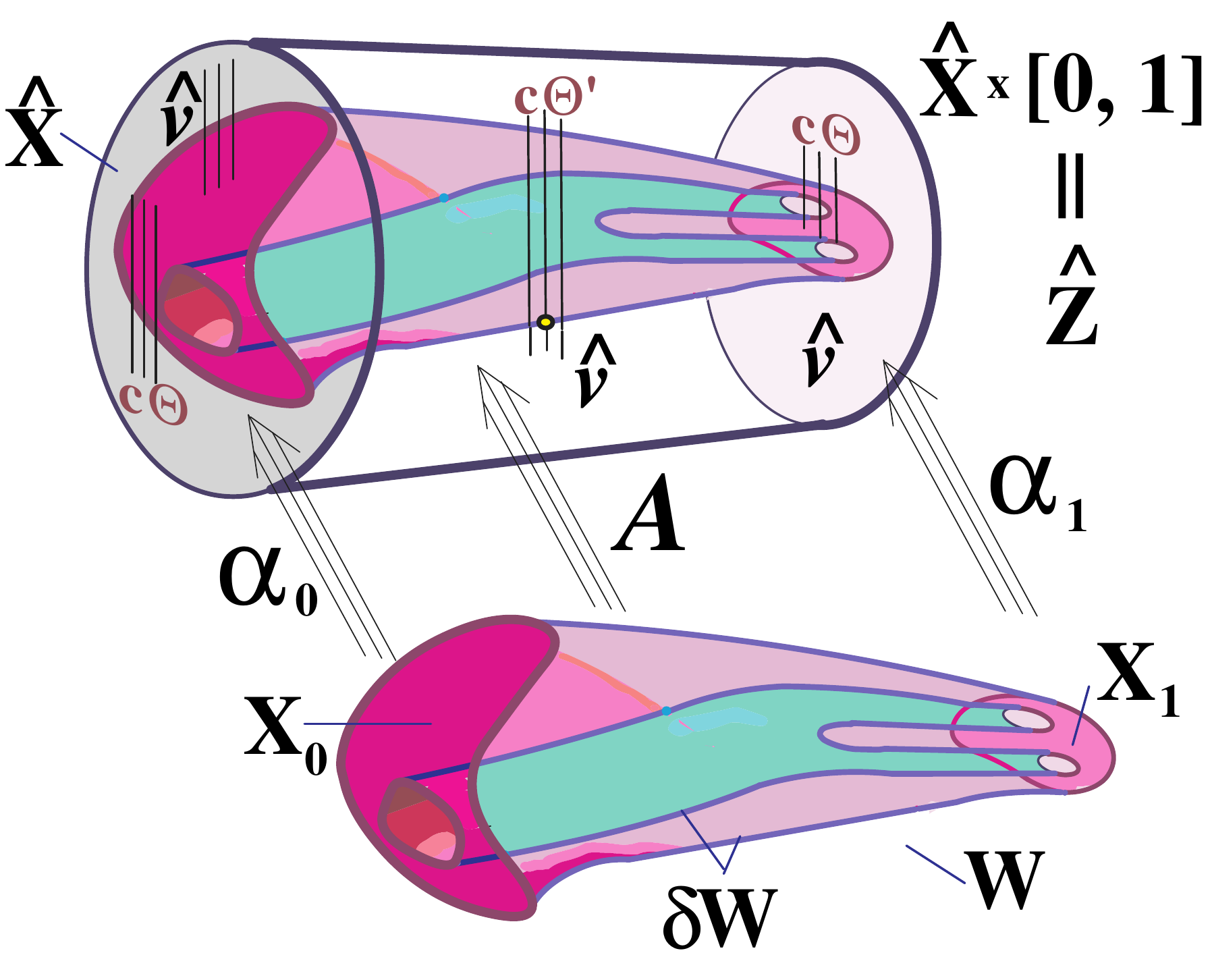}}
\bigskip
\caption{\small{The ingredients of Definition \ref{quasi_isotopy_of_fields}. For simplicity of the depiction, the maps $\a_0, \a_1, A$ are shown as embeddings.}} 
\label{fig.A_quasitopy}
\end{figure}

We are in position to state one of the main results of this paper.

\begin{theorem}\label{th.envelops}
Let $\Theta' \subset \Theta \subset \mathbf\Om$  be closed subposets that do not contain the element $(\emptyset)$.  Assume that $d' \geq d$ and $d' \equiv d \equiv 0 \mod 2$. Let $\hat X$ be a smooth compact connected $(n+1)$-dimensional manifold, equipped with a convex traversing vector field $\hat v$.\smallskip

Then, under the notations of Definition \ref{def.triples}, 
there is a canonical bijection
\begin{eqnarray}\label{eq.bijection, envelops}
\Phi^{\mathsf{emb}}: \mathcal{QT}^{\mathsf{emb}}_{d, d'}(\hat X, \hat v; \mathbf{c}\Theta; \mathbf{c}\Theta') \stackrel{\approx}{\longrightarrow} \nonumber \\
\stackrel{\approx}{\longrightarrow} \big[\big[(\d_1^+\hat X(\hat v),\, \d_2^-\hat X(\hat v)),\; \e_{d, d'}: (\cP_d^{\mathbf c\Theta}, pt) \to (\cP_{d'}^{\mathbf c\Theta'}, pt')\big]\big]
\end{eqnarray}
and a canonical surjection
\begin{eqnarray}\label{eq.bijection, pseudo-envelops}
\Phi^{\mathsf{sub}}: \mathcal{QT}^{\mathsf{sub}}_{d, d'}(\hat X, \hat v; \mathbf{c}\Theta; \mathbf{c}\Theta') \stackrel{epi}{\longrightarrow} \nonumber \\
\stackrel{epi}{\longrightarrow} \big[\big[(\d_1^+\hat X(\hat v),\, \d_2^-\hat X(\hat v)),\; \e_{d, d'}: (\cP_d^{\mathbf c\Theta}, pt) \to (\cP_{d'}^{\mathbf c\Theta'}, pt')\big]\big].
\end{eqnarray}
 The map $\Phi^{\mathsf{sub}}$ admits a right inverse.
\end{theorem}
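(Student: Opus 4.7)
The plan is to reduce Theorem~\ref{th.envelops} to its already-established analogue for submersions into a product, namely Theorem~\ref{th.section}, by using Lemma~\ref{lem.box} to install a virtual global section in $\hat X$. First I would invoke Lemma~\ref{lem.box} to embed $\hat X\hookrightarrow [0,1]\times Y$, with $\hat v$ tangent to the $[0,1]$-fibers and with a Lyapunov function $h$. Under the $(-\hat v)$-retraction $p:\hat X\to\d_1^+\hat X(\hat v)$, the locus $\d_1^+\hat X(\hat v)$ is homeomorphic to $Y$, and the subset $\d_2^-\hat X(\hat v)\subset Y$ parametrizes exactly those $\hat v$-trajectories that either degenerate to a point or acquire an even-order tangency to $\d\hat X$. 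This identification translates every notion on $\hat X$ relative to the $\hat v$-flow into the corresponding notion on $[0,1]\times Y$ relative to the product $1$-foliation $\mathcal L$.

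Second, given a convex pseudo-envelop $\a:X\to\hat X$, I form the composition $\tilde\a:X\to[0,1]\times Y$. It is a submersion whose tangency patterns to $\mathcal L$ coincide fiberwise with those of $\a(\d X)$ relative to $\hat v$; in particular they lie in $\mathbf c\Theta$ and satisfy the parity condition of~(\ref{multiplicity_condition}). Theorem~\ref{th.section} then yields a classifying map $\Psi(\tilde\a):(Y,\d Y)\to(\cP_d^{\mathbf c\Theta},\cP_d^{(\emptyset)})$. Since for $y\in\d_2^-\hat X(\hat v)$ the trajectory $\hat\g_y$ is a collapsed arc whose only possible intersection with $\a(\d X)$ is a tangency point, a preliminary small adjustment of $\a$ near $\d_2^-\hat X(\hat v)$ makes $\Psi(\tilde\a)$ send $\d_2^-\hat X(\hat v)$ to the basepoint $pt \in \cP_d^{\mathbf c\Theta}$. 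The resulting class modulo the equivalence induced by $\e_{d,d'}$ defines $\Phi^{\mathsf{sub/emb}}(\a)$.

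Third, any quasitopy $A:W\to\hat X\times[0,1]$ between $\a_0$ and $\a_1$ is handled by applying Lemma~\ref{lem.box} to the convex pair $(\hat X\times[0,1],\hat v^\bullet)$, which embeds the quasitopy inside $[0,1]\times(Y\times[0,1])$ with matching multiplicity and tangency constraints $\mathbf c\Theta$ and $\mathbf c\Theta'$. Theorem~\ref{th.section} then promotes it to a triple-homotopy in the sense of Definition~\ref{def.triples}, showing that $\Phi^{\mathsf{sub/emb}}$ is well-defined on quasitopy classes. Conversely, any map $G:(\d_1^+\hat X(\hat v),\d_2^-\hat X(\hat v))\to(\cP_d^{\mathbf c\Theta},pt)$ extends, via the identification $Y\cong\d_1^+\hat X(\hat v)$ and the inclusion $pt\in\cP_d^{(\emptyset)}$, to a map $(Y,\d Y)\to(\cP_d^{\mathbf c\Theta},\cP_d^{(\emptyset)})$; Theorem~\ref{th.section} combined with Lemma~\ref{lem.bijection D} realizes it as a regular embedding $\a':X\hookrightarrow[0,1]\times Y$. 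A $\d_t$-flow isotopy, supported away from $\d_2^-\hat X(\hat v)$, pushes $\a'(X)$ into $\hat X$ and produces the required convex envelope $\a$ with $\Phi^{\mathsf{emb}}(\a)=G$. The same argument applied to quasitopies, together with the parity-$d$ bounding step of Lemma~\ref{lem.bijection D}, gives injectivity of $\Phi^{\mathsf{emb}}$; the embedding realization is the right inverse to $\Phi^{\mathsf{sub}}$, since every embedding is a submersion.

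The main obstacle will be the isotopy in the last step: one must push $\a'(X)$ from the ambient box $[0,1]\times Y$ into the convex envelop $\hat X$ without disturbing the tangency patterns or the basepoint condition at $\d_2^-\hat X(\hat v)$. This requires a local analysis at the cusp-type locus of $\d\hat X$, where $\hat v$ is tangent to $\d\hat X$ with the even-multiplicity normal form guaranteed by convexity, and a careful perturbation of $\a'$ so that its image remains traversally generic (in the sense of Definition~\ref{def.traversally_generic}) while being contained in $\hat X$. Once this localization is handled, the remaining content of the theorem is essentially a translation of Theorem~\ref{th.section} along the virtual section supplied by Lemma~\ref{lem.box}.
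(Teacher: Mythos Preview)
Your proposal follows essentially the same route as the paper: reduce to the product case via Lemma~\ref{lem.box}, then invoke the classifying-map results for submersions into $\R\times Y$ (Theorem~\ref{th.section}, which packages Theorem~3.2 of \cite{K9}), using the contractibility of $\cP_d^{(\emptyset)}$ to pass from $(Y,\d Y)\to(\cP_d^{\mathbf c\Theta},\cP_d^{(\emptyset)})$ to $(\d_1^+\hat X(\hat v),\d_2^-\hat X(\hat v))\to(\cP_d^{\mathbf c\Theta},pt)$. The paper's proof is terser but structurally identical.

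Two minor remarks on your write-up. First, your ``preliminary small adjustment of $\a$ near $\d_2^-\hat X(\hat v)$'' is unnecessary: since $\a(X)\subset\mathsf{int}(\hat X)$ and, over any $y\in\d Y\cong\d_2^-\hat X(\hat v)$, the box fiber meets $\hat X$ only in a single point of $\d\hat X$, the image $\a(\d X)$ automatically misses those fibers. So $\Psi(\tilde\a)$ already lands in $\cP_d^{(\emptyset)}$ over $\d Y$; the only ``adjustment'' needed is the contraction of that cell to $pt$, which is a homotopy, not a perturbation of $\a$. Second, your ``main obstacle'' is genuine but lighter than you suggest: no local cusp analysis is needed. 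After a homotopy of $G$ rel basepoint, one may assume $G$ maps a collar $U$ of $\d Y$ into $\cP_d^{(\emptyset)}$, so the realized $\a'(X')$ lies over $Y\setminus U$. There the $\hat X$-segments $([0,1]\times\{y\})\cap\hat X$ have length bounded below, and a smooth fiberwise affine rescaling of the $[0,1]$-coordinate (equal to the identity outside a slightly larger collar) carries $\a'(X')$ into $\mathsf{int}(\hat X)$ without altering tangency patterns to $\mathcal L$. The paper absorbs this into the phrase ``by similar transversality arguments.''
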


\begin{proof} By Lemma \ref{lem.box}, any convex pseudo-envelop  $\a: X \to \hat X$ may be incapsulated into a convex pseudo-envelop $\tilde\a: X \to \hat X \subset [0, 1] \times Y $ of the product type (we called a capsule $[0, 1] \times Y $  ``{\sf a box}"), where $Y$ is homeomorphic to $\d_1^+\hat X(\hat v)$, and its boundary $\d Y$ to $\d_2^-\hat X(\hat v)$. 

Conversely, any submersion $\tilde\a: X \to  [0, 1] \times Y$, where $Y$ is a smooth compact $n$-manifold, by rounding the corners $\d[0, 1] \times \d Y$ of $[0, 1] \times Y$ produces a convex pseudo-envelop $(\hat X, \hat v) \subset ([0, 1] \times Y, \tilde v)$, where $\tilde v \neq 0$ is tangent to the fibers of $[0, 1] \times Y \to Y$. Similarly, the cobordisms  $A$ between pairs of quasitopies $\a_0, \a_1$ can be incapsulated in boxes of the form $([0, 1] \times Y) \times [0, 1]$.  Therefore, all the results from Section 3 in \cite{K9} apply to $\tilde\a^\d$; in particular, the pivotal Theorem 3.2 from \cite{K9} 
applies. With its help, the maps $\Phi^{\mathsf{sub}}$ from (\ref{eq.bijection, pseudo-envelops}) and $\Phi^{\mathsf{emb}}$ from (\ref{eq.bijection, envelops}) are generated as compositions of maps $p(\tilde v): (\d_1^+\hat X(\hat v),\, \d_2^-\hat X(\hat v)) \to (Y, \d Y)$ with the maps $\{\Phi^{\tilde\a^\d}: (Y, \d Y) \to \mathcal (\cP_d^{\mathbf c \Theta}, \cP_d^{(\emptyset)})\}$ from Theorem 3.2 in \cite{K9};  the latter map being  generated by the locus $\tilde\a(\d X) \subset \R \times Y$. Note that the cell $\cP_d^{(\emptyset)} \subset \cP_d$ contracts to a singleton $pt \in \cP_d^{(\emptyset)}$. 

Similarly, for any quasitopy $A: W \to \hat X \times [0,1]$ as in Definition \ref{quasi_isotopy_of_fields}, the product $Y \times [0,1]$ that encapsulates $\hat X \times [0,1]$ is mapped to $\cP_{d'}^{\mathbf{c}\Theta'}$ with the help of $A(\delta W)$, while 
$\d Y \times [0, 1]$ are mapped to the cell $\cP_{d'}^{(\emptyset)}$, which contracts to the singleton $\e_{d, d'}(pt) \in \cP_{d'}^{(\emptyset)}$.

As in Theorem 3.2 from \cite{K9}, and by similar transversality arguments, $\Phi^{\mathsf{emb}}$ is a 1-to-1 map. Here we need to use Lemma 3.4 from \cite{K9} 
to conclude that, under a $\d \mathcal E_d$-{\sf regular map} (see Definition 3.4 from \cite{K9}), the preimage of the hypersurface $\d \mathcal E_d \subset \R \times \cP_d$ (see (\ref{eq.E})) bounds a compact manifold $X$ in $\hat X$, provided $d \equiv 0 \mod 2$. Again, as in Proposition 3.5 from \cite{K9}, the bijective map in (\ref{eq.bijection, envelops}) helps to prove that the map in (\ref{eq.bijection, pseudo-envelops}) is a split surjective one.
\end{proof}

\begin{corollary} The sets $\mathcal{QT}^{\mathsf{emb}}_{d, d'}(\hat X, \hat v; \mathbf{c}\Theta; \mathbf{c}\Theta')$ are trivial when $\dim(\hat X) < \max_{\om \in \Theta} \{|\om|'\}$.
\end{corollary}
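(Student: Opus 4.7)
The plan is to reduce the claim to a connectivity statement for the target space via Theorem~\ref{th.envelops}, which produces a canonical bijection
\[
\mathcal{QT}^{\mathsf{emb}}_{d, d'}(\hat X, \hat v; \mathbf{c}\Theta; \mathbf{c}\Theta') \;\stackrel{\approx}{\longrightarrow}\; \bigl[\bigl[(\d_1^+\hat X(\hat v), \d_2^-\hat X(\hat v)),\; \e_{d, d'}\colon (\cP_d^{\mathbf c\Theta}, pt) \to (\cP_{d'}^{\mathbf c\Theta'}, pt')\bigr]\bigr].
\]
Setting $n = \dim \hat X - 1$, the source $(\d_1^+\hat X(\hat v), \d_2^-\hat X(\hat v))$ is a compact $n$-manifold with boundary, so its quotient has the homotopy type of an $n$-dimensional CW complex. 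By elementary obstruction theory, every based map from such a complex into an $n$-connected target is null-homotopic, so triviality of the quasitopy set will follow once I verify that $\cP_d^{\mathbf c\Theta}$ is $n$-connected.

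For the connectivity of $\cP_d^{\mathbf c\Theta}$ I would argue as follows. The space is the complement in $\cP_d \cong \R^d$ of the closed stratified set $\cP_d^\Theta = \bigcup_{\om \in \Theta}\sR_d^\om$, each open stratum $\mathring{\sR}_d^\om$ having codimension $|\om|'$ in $\R^d$. The hypothesis of the corollary, combined with the fact that $\Theta$ is closed under merges and inserts (each of which increases $|\om|'$ by $1$), is used to control the codimensions of all strata of $\cP_d^\Theta$. Applying Alexander duality in the form of Theorem~\ref{thA}, one has $\tilde H^j(\cP_d^{\mathbf c\Theta}; \Z) \approx \tilde H_{d-j-1}(\bar\cP_d^\Theta; \Z)$, and this last group vanishes for $j \leq n$ because $\bar\cP_d^\Theta$ has topological dimension strictly less than $d - n$. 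Simple connectivity of $\cP_d^{\mathbf c\Theta}$---either from Proposition~\ref{prop.fundamental_groups} applied with $k \geq 2$, or from a direct transversality argument noting that the relevant complement codimensions exceed $2$---together with Hurewicz promotes this cohomological vanishing to genuine $n$-connectedness.

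The step I expect to require the most care is matching the hypothesis $\dim\hat X < \max_{\om\in\Theta}|\om|'$ to the connectivity threshold supplied by obstruction theory: complement-connectivity in $\R^d$ is classically governed by the \emph{minimum} codimension of the removed locus, so one must exploit the closure-under-merges-and-inserts structure of $\Theta$ (and the consequent presence of deep elements forcing all strata of $\cP_d^\Theta$ into the required codimension range) to deduce $n$-connectedness of $\cP_d^{\mathbf c\Theta}$ from the stated numerical inequality. Once this calibration is carried out, Theorem~\ref{th.envelops} immediately closes the argument.
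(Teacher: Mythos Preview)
Your approach is essentially the paper's: invoke Theorem~\ref{th.envelops} to identify the quasitopy set with homotopy classes of maps $(\d_1^+\hat X(\hat v), \d_2^-\hat X(\hat v)) \to (\cP_d^{\mathbf c\Theta}, pt)$, then argue that every such map is null-homotopic because the domain has small dimension relative to the codimension of $\cP_d^\Theta$ in the contractible $\cP_d \cong \R^d$. The paper phrases the second step simply as ``a general position argument, applied to homotopies of maps'' (any null-homotopy in $\cP_d$ is an $(n{+}1)$-dimensional family that can be pushed off $\cP_d^\Theta$ once $n+1 < \textup{codim}(\cP_d^\Theta,\cP_d)$), while you route through Alexander duality and Hurewicz; both are valid.

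Your unease about the ``$\max$'' is entirely justified, and your proposed fix via the closure properties of $\Theta$ cannot work. Closure under merges and inserts only guarantees that $\Theta$ contains elements with \emph{larger} $|\om|'$; it gives no lower bound on $|\om|'$ across $\Theta$. The codimension of $\cP_d^\Theta$ in $\cP_d$ is $\min_{\om \in \Theta_{\langle d]}} |\om|'$, attained on the maximal poset elements, and it is this minimum that governs both general position and complement connectivity. The paper's statement and its one-line proof both write $\max$ where $\min$ is meant --- the assertion ``$\textup{codim}(\cP_d^\Theta, \cP_d) = \max_{\om \in \Theta}\{|\om|'\}$'' in the paper's proof is false as written. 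With $\min$ in place of $\max$, the corollary and both arguments go through; with $\max$ as stated, the corollary fails: for $\Theta = \mathbf\Om_{|\sim|' \geq 2}$ one has $\max_{\om\in\Theta_{\langle d]}}|\om|' = d-1$, yet by Proposition~\ref{prop.free_group_A} the space $\cP_d^{\mathbf c\Theta}$ has nontrivial $\pi_1$, so already for $\hat X = S^1\times[0,1]$ (dimension $2 < d-1$ once $d\geq 4$) the quasitopy set is nontrivial.
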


\begin{proof} Since $\textup{codim}(\cP_d^\Theta, \cP_d) = \max_{\om \in \Theta} \{|\om|'\}$ and $\cP_d$ is contractible,  by Theorem \ref{th.envelops}, the claim follows by a general position argument,  applied to homotopies of maps from the pair $(\d_1^+\hat X(\hat v),\, \d_2^-\hat X(\hat v))$ to the pair $(\cP_d^{\mathbf c\Theta}, pt)$. 
\end{proof}

\begin{corollary}\label{cor.codim}
The sets $\mathcal{QT}^{\mathsf{emb}}_{d, d'}(\hat X, \hat v; \mathbf{c}\Theta; \mathbf{c}\Theta')$, $\mathcal{QT}^{\mathsf{sub}}_{d, d'}(\hat X, \hat v; \mathbf{c}\Theta; \mathbf{c}\Theta')$ are invariants of the path-connected component of the vector field $\hat v$ in the space $\mathsf{conv}(\hat X)$ of convex traversing vector fields.
\end{corollary}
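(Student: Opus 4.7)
The plan is to combine Theorem \ref{th.envelops} with Lemma \ref{lem.conv_fields}. By Theorem \ref{th.envelops}, the set $\mathcal{QT}^{\mathsf{emb}}_{d, d'}(\hat X, \hat v; \mathbf{c}\Theta; \mathbf{c}\Theta')$ is canonically in bijection, and $\mathcal{QT}^{\mathsf{sub}}_{d, d'}(\hat X, \hat v; \mathbf{c}\Theta; \mathbf{c}\Theta')$ canonically admits a split surjection, onto
$$\big[\big[(\d_1^+\hat X(\hat v),\, \d_2^-\hat X(\hat v)),\; \e_{d, d'}: (\cP_d^{\mathbf c\Theta}, pt) \to (\cP_{d'}^{\mathbf c\Theta'}, pt')\big]\big].$$
The dependence of this target on $\hat v$ is entirely through the pair $(\d_1^+\hat X(\hat v),\, \d_2^-\hat X(\hat v))$. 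Hence it will suffice to show that this pair, up to diffeomorphism of pairs, is an invariant of the path-connected component of $\hat v$ in $\mathsf{conv}(\hat X)$.

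For two vector fields $\hat v_0, \hat v_1$ in the same component, Lemma \ref{lem.conv_fields} supplies a smooth diffeomorphism $\phi: \hat X \to \hat X$ mapping oriented $\hat v_0$-trajectories to oriented $\hat v_1$-trajectories. Since the loci $\d_1^+\hat X$ and $\d_2^-\hat X$ are characterized by incidence of $\d\hat X$ with the oriented $1$-foliation, $\phi$ restricts to a diffeomorphism of pairs $(\d_1^+\hat X(\hat v_0), \d_2^-\hat X(\hat v_0)) \to (\d_1^+\hat X(\hat v_1), \d_2^-\hat X(\hat v_1))$. Composition with this restricted $\phi$ induces the required bijection of homotopy class sets, and therefore, via $\Phi^{\mathsf{emb}}$ and $\Phi^{\mathsf{sub}}$, the claimed bijections of quasitopy sets.

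Next I would realize this bijection directly at the level of quasitopy classes through the assignments $\alpha \mapsto \phi \circ \alpha$ on pseudo-envelops/envelops and $A \mapsto (\phi \times \mathrm{id}_{[0,1]}) \circ A$ on quasitopy cobordisms. Because $\phi$ intertwines the oriented $1$-foliations, each $\hat v_0$-trajectory $\hat\g$ is carried to a $\hat v_1$-trajectory $\phi(\hat\g)$, and $\phi$ identifies $\hat\g \cap \alpha(\d X)$ with $\phi(\hat\g) \cap (\phi \circ \alpha)(\d X)$ as ordered sequences of points together with their multiplicities. Consequently the combinatorial tangency patterns $\om^\alpha(\hat\g)$ and $\om^{\phi \circ \alpha}(\phi(\hat\g))$ agree, so the forbidden-pattern constraints transfer cleanly from $\hat v_0$ to $\hat v_1$; the inverse bijection is induced by $\phi^{-1}$.

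The main obstacle I expect to navigate is verifying that $\phi$ preserves the structural hypotheses of Definitions \ref{def.boundary_generic} and \ref{def.convex_envelop}: that $(\phi \circ \alpha)^\d$ is locally generic relative to $\hat v_1$ whenever $\alpha^\d$ is locally generic relative to $\hat v_0$, and that the parity and incidence data for the $\hat v^\bullet$-trajectories in $\hat X \times [0,1]$ are respected. These reduce to the observation that Definition \ref{def.boundary_generic} is formulated in terms of flags in tangent spaces and general position in transversal sections of the $1$-foliation, all diffeomorphism-invariant under maps preserving the oriented $1$-foliation. So the argument will be conceptually clean, with the bulk of the work being this routine but careful bookkeeping.
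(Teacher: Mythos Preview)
Your second approach --- transporting pseudo-envelops and cobordisms by $\alpha\mapsto\phi\circ\alpha$ and $A\mapsto(\phi\times\mathrm{id})\circ A$ --- is correct and is essentially the paper's proof. The paper simply observes that, by their very definitions, the quasitopy sets depend only on the smooth topological type of the oriented $1$-foliations $\mathcal L(\hat v)$ and $\mathcal L^\bullet(\hat v^\bullet)$, and then invokes Lemma~\ref{lem.conv_fields} to conclude this type is constant on path-components of $\mathsf{conv}(\hat X)$. Your direct transport is the explicit unwinding of that observation.

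However, your first paragraph contains a genuine gap for the $\mathsf{sub}$ case. You write that the bijection of target homotopy sets, ``via $\Phi^{\mathsf{emb}}$ and $\Phi^{\mathsf{sub}}$,'' yields the claimed bijections of quasitopy sets. This works for $\Phi^{\mathsf{emb}}$ because it is a bijection, but Theorem~\ref{th.envelops} only asserts that $\Phi^{\mathsf{sub}}$ is a split surjection. A bijection between the targets of two split surjections does not by itself produce a bijection between their sources: the kernels $\ker(\Phi^{\mathsf{sub}})$ could in principle differ for $\hat v_0$ and $\hat v_1$. So the $\mathsf{sub}$ conclusion cannot be extracted from Theorem~\ref{th.envelops} alone; you genuinely need the direct argument of your second paragraph (or the paper's equivalent observation) to handle it. Once you have that direct argument, the detour through Theorem~\ref{th.envelops} is unnecessary for either case.
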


\begin{proof} 
By their definitions, the sets $\mathcal{QT}^{\mathsf{emb}}_{d, d'}(\hat X, \hat v; \mathbf{c}\Theta; \mathbf{c}\Theta')$ and $\mathcal{QT}^{\mathsf{sub}}_{d, d'}(\hat X, \hat v; \mathbf{c}\Theta; \mathbf{c}\Theta')$ depend only of the smooth topological types of the oriented $1$-dimensional foliations $\mathcal L(\hat v)$, and $\mathcal L^\bullet(\hat v^\bullet)$ on $\hat X$ and $\hat X \times [0,1]$, respectively. By Lemma \ref{lem.conv_fields}, the smooth topological types of these foliations do not change along any path in the space $\mathsf{conv}(\hat X)$, that contains the point $\hat v$. 
\end{proof}
\begin{corollary}\label{cor.depends_on_htype} The set $\mathcal{QT}^{\mathsf{emb}}_{d, d'}(\hat X, \hat v; \mathbf{c}\Theta; \mathbf{c}\Theta')$ depends only on the homotopy type of the pair $(\d_1^+\hat X(\hat v),\, \d_2^-\hat X(\hat v))$.
\end{corollary}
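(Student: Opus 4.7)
The plan is to deduce the corollary as an immediate consequence of the bijection (\ref{eq.bijection, envelops}) provided by Theorem \ref{th.envelops}. Indeed, that bijection identifies the set $\mathcal{QT}^{\mathsf{emb}}_{d, d'}(\hat X, \hat v; \mathbf{c}\Theta; \mathbf{c}\Theta')$ with
$$\big[\big[(\d_1^+\hat X(\hat v),\, \d_2^-\hat X(\hat v)),\; \e_{d, d'}: (\cP_d^{\mathbf c\Theta}, pt) \to (\cP_{d'}^{\mathbf c\Theta'}, pt')\big]\big],$$
and the target triple $\e_{d, d'}: (\cP_d^{\mathbf c\Theta}, pt) \to (\cP_{d'}^{\mathbf c\Theta'}, pt')$ does not depend on $(\hat X, \hat v)$ at all, but only on the data $d, d', \Theta, \Theta'$.

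Thus I would then invoke the general fact (which follows directly from Definition \ref{def.triples}) that the set $[[(X_1, A_1),\, \e: (X_2, A_2) \to (X_3, A_3)]]$ is a homotopy invariant of the source pair $(X_1, A_1)$: any homotopy equivalence of pairs $\phi: (X_1', A_1') \to (X_1, A_1)$ induces a bijection on these sets by precomposition $[g] \mapsto [g \circ \phi]$, because it induces a bijection both on $[(X_1, A_1), (X_2, A_2)]$ and on $[(X_1, A_1), (X_3, A_3)]$, and the equivalence relation defining $[[\,\cdot\,,\,\e\,]]$ is preserved under such precomposition. Here we use that the pairs in question are smooth manifold pairs, hence CW pairs, so that the usual homotopy-theoretic machinery applies.

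Combining these two observations, if $(\hat X_0, \hat v_0)$ and $(\hat X_1, \hat v_1)$ are two convex pairs (with the same $d, d', \Theta, \Theta'$) whose Morse stratifications $(\d_1^+\hat X_i(\hat v_i),\, \d_2^-\hat X_i(\hat v_i))$ are homotopy equivalent as pairs, then the right-hand sides in (\ref{eq.bijection, envelops}) for the two convex pairs are in canonical bijection; hence, via $\Phi^{\mathsf{emb}}$, so are the quasitopy sets $\mathcal{QT}^{\mathsf{emb}}_{d, d'}(\hat X_i, \hat v_i; \mathbf{c}\Theta; \mathbf{c}\Theta')$. No genuine obstacle arises; the entire content of the corollary is already packaged inside Theorem \ref{th.envelops}, and the argument reduces to observing that the right-hand side of (\ref{eq.bijection, envelops}) manifestly factors through the homotopy type of the source pair.
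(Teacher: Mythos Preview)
Your proposal is correct and takes essentially the same approach as the paper: the paper's proof is a one-liner stating that the claim follows instantly from the bijection in (\ref{eq.bijection, envelops}), and you have simply spelled out why that bijection implies the result, namely that the right-hand side is a homotopy invariant of the source pair.
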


\begin{proof} The claim follows instantly from the bijection in formula (\ref{eq.bijection, envelops}). 
\end{proof}

The next theorem is our main result about the stability of $\mathcal{QT}^{\mathsf{emb}}_{d,\, d+2}(\hat X, \hat v; \mathbf{c}\Theta; \mathbf{c}\Theta)$ as a function of $d$ in terms of the function $\eta_\Theta(d+2)$ from (\ref{eq.eta(Theta)}).
\begin{theorem}\label{th.envelop_stabilization} {\bf (short stabilization: $\mathbf{\{d \Rightarrow d+2\}}$)} 
Let $\Theta$ be a closed subposet of $\mathbf \Om$. If 
$$\dim(\hat X) <  d + 2 - \psi_\Theta(d+2)$$  and $|\om|' > 2$ for all $\om \in \Theta_{\langle d+2]}$, then there exists a bijection 
\begin{eqnarray}\label{BLA}
\Phi^{\mathsf{emb}}: \mathcal{QT}^{\mathsf{emb}}_{d,\, d+2}(\hat X, \hat v; \mathbf{c}\Theta; \mathbf{c}\Theta) \stackrel{\approx}{\longrightarrow} 
 \big[(\d_1^+\hat X(\hat v),\, \d_2^-\hat X(\hat v)),\, (\cP_d^{\mathbf c\Theta}, pt)\big] \nonumber
\end{eqnarray}
\end{theorem}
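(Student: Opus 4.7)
The plan is to derive the claimed bijection directly from the general classifying map of Theorem~\ref{th.envelops} by showing that, under the dimensional hypothesis, the quotient relation appearing on its right-hand side collapses to an honest set of homotopy classes. Specializing Theorem~\ref{th.envelops} to $\Theta' = \Theta$ and $d' = d+2$ gives a bijection
\[
\mathcal{QT}^{\mathsf{emb}}_{d,\, d+2}(\hat X, \hat v; \mathbf c\Theta; \mathbf c\Theta) \;\approx\; \big[\big[(\d_1^+\hat X(\hat v),\, \d_2^-\hat X(\hat v)),\; \e_{d, d+2}: (\cP_d^{\mathbf c\Theta}, pt) \to (\cP_{d+2}^{\mathbf c\Theta}, pt')\big]\big],
\]
and by Definition~\ref{def.triples} the right-hand side is the quotient of $[(\d_1^+\hat X(\hat v), \d_2^-\hat X(\hat v)), (\cP_d^{\mathbf c\Theta}, pt)]$ by the relation $g_0 \sim g_1 \Leftrightarrow \e_{d, d+2}\circ g_0 \simeq \e_{d, d+2}\circ g_1$. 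So the theorem reduces to showing that post-composition with $\e_{d, d+2}$ is a bijection on the relevant sets of homotopy classes.

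First I would establish simple connectedness of both $\cP_d^{\mathbf c\Theta}$ and $\cP_{d+2}^{\mathbf c\Theta}$. The hypothesis $|\om|' > 2$ for every $\om \in \Theta_{\langle d+2]}$ forces the closed subset $\cP_{d'}^{\Theta} \subset \cP_{d'} \cong \R^{d'}$ (for $d' \in \{d, d+2\}$) to be a finite union of smooth strata $\mathring{\sR}_{d'}^{\om}$ of codimension $|\om|' \geq 3$ inside a contractible ambient space. A standard transversality argument then shows that any based loop in the complement extends to a based disk in $\cP_{d'}$ which can be perturbed off this codimension-$3$ stratified set, yielding $\pi_1(\cP_{d'}^{\mathbf c\Theta}) = 0$.

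Next, Theorem~\ref{th.main_stab} supplies that $\e_{d, d+2}$ induces an isomorphism $H_j(\cP_d^{\mathbf c\Theta};\Z) \to H_j(\cP_{d+2}^{\mathbf c\Theta};\Z)$ for every $j \leq d+2 - \psi_\Theta(d+2)$. Combined with the simple connectedness just established, the homological Whitehead theorem (equivalently, relative Hurewicz applied to the mapping cylinder of $\e_{d, d+2}$) upgrades $\e_{d, d+2}$ to a $(d+2-\psi_\Theta(d+2))$-connected map of simply connected CW complexes: an isomorphism on $\pi_i$ for $i < d+2-\psi_\Theta(d+2)$ and a surjection on $\pi_{d+2-\psi_\Theta(d+2)}$.

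Finally I would invoke the standard obstruction-theoretic fact that any $n$-connected map of simply connected CW complexes induces a bijection on $[(Z, A), \cdot\,]$ for every CW pair with $\dim Z < n$. The pair $(\d_1^+\hat X(\hat v), \d_2^-\hat X(\hat v))$ has dimension at most $\dim(\hat X) - 1$, and since $\psi_\Theta(d+2)$ is an integer the strict inequality $\dim(\hat X) < d+2 - \psi_\Theta(d+2)$ gives $\dim(\hat X) - 1 \leq d - \psi_\Theta(d+2) < d+2 - \psi_\Theta(d+2)$, precisely the required connectivity margin. Hence the equivalence relation of Definition~\ref{def.triples} is trivial and $\Phi^{\mathsf{emb}}$ descends to a bijection onto $[(\d_1^+\hat X(\hat v), \d_2^-\hat X(\hat v)), (\cP_d^{\mathbf c\Theta}, pt)]$, as claimed. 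The subtle point I expect to handle most carefully is the calibration of the homological stability range of Theorem~\ref{th.main_stab} against the dimension of the boundary pair while genuinely securing $\pi_1 = 0$ for the targets, which is exactly where the hypothesis $|\om|' > 2$ enters via the codimension bound on $\cP_{d'}^{\Theta}$.
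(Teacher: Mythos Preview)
Your proof is correct and follows essentially the same route as the paper's own argument: both reduce via Theorem~\ref{th.envelops} to showing that post-composition with $\e_{d,d+2}$ is injective on homotopy classes, then use the hypothesis $|\om|'>2$ for simple connectedness, invoke Theorem~\ref{th.main_stab} for the homological stability range, upgrade via Whitehead to a connectivity statement, and finish with obstruction theory. Your bookkeeping of the connectivity degree is in fact slightly sharper than the paper's (you extract $k$-connectedness from the $j\leq k$ range rather than the paper's $(k-1)$-connectedness from $j<k$), and your aside that $\psi_\Theta(d+2)$ is an integer is unnecessary since $\dim(\d_1^+\hat X)=\dim(\hat X)-1<d+1-\psi_\Theta(d+2)<k$ holds regardless.
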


\begin{proof} Let $k =_{\mathsf{def}} d+2 - \psi_\Theta(d+2)$. If $|\om|' > 2$ for all $\om \in \Theta_{\langle d+2]}$, then both $\cP_d^{\mathbf c\Theta}$ and $\cP_{d+2}^{\mathbf c\Theta}$ are simply-connected, since $\textup{codim}(\bar{\cP}_{d+2}^{\Theta}, \bar{\cP}_{d+2}) > 2$ and $\textup{codim}(\bar{\cP}_{d}^{\Theta}, \bar{\cP}_{d}) > 2$. By combining Theorem \ref{th.main_stab} with the Alexander duality as in \cite{KSW2}, we get an isomorphism $\e_\ast: H_j(\mathcal P_{d}^{\mathbf c\Theta}; \Z) \approx H_j(\mathcal P_{d+2}^{\mathbf c\Theta}; \Z)$ for all $j < k$. By the Whitehead Theorem (the inverse Hurewicz Theorem) (see Theorem (7.13) in \cite{Wh} or Theorem 10.1 in \cite{Hu}),
the map $\e: \mathcal P_{d}^{\mathbf c\Theta} \to \mathcal P_{d+2}^{\mathbf c\Theta}$ is $(k-1)$-connected. Thus, if $n = \dim(\d_1^+\hat X(\hat v)) \leq k-1$, then,  by the standard application of the obstruction theory, no map $\Phi: (\d_1^+\hat X(\hat v),\, \d_2^-\hat X(\hat v)) \to (\cP_d^{\mathbf c\Theta}, pt)$, which is not null-homotopic, becomes null-homotopic in $(\cP_{d+2}^{\mathbf c\Theta}, pt')$. 
Now the claim follows from Theorem \ref{th.envelops}.
\end{proof}

Theorem \ref{th.envelop_stabilization} leads to the following straightforward, but important implication.

\begin{corollary}\label{cor.STABILIZATION} {\bf (long stabilization $\mathbf{d \Rightarrow \infty}$)} Let $\Theta$ be a 
closed profinite (see Definition \ref{def.profinite}) poset such that 
$|\om|' > 2$ for all $\om \in \Theta$.  

Then, given a convex pair $(\hat X, \hat v)$, the quasiptopy set $\mathcal{QT}^{\mathsf{emb}}_{d,\, d'}(\hat X, \hat v; \mathbf{c}\Theta; \mathbf{c}\Theta)$ stabilizes towards  the set of homotopy classes $\big[(\d_1^+\hat X(\hat v),\, \d_2^-\hat X(\hat v)),\, (\cP_\infty^{\mathbf c\Theta}, pt)\big]$ for all sufficiently big $d$ relative to $\dim(\hat X)$. 
\hfill $\diamondsuit$
\end{corollary}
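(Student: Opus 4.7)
The plan is to combine Theorem \ref{th.envelop_stabilization} with a homotopy-theoretic stabilization of the target sets $[(\d_1^+\hat X(\hat v), \d_2^-\hat X(\hat v)),\, (\cP_d^{\mathbf c\Theta}, pt)]$, both steps being driven by the profiniteness hypothesis.

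First I would verify that the dimensional hypothesis $\dim(\hat X) < d + 2 - \psi_\Theta(d+2) = \xi_\Theta(d+2)$ of Theorem \ref{th.envelop_stabilization} holds for all sufficiently large $d$. Because each insert operator $\sI_k$ raises $|\om|$ by $2$, every maximal element $\om_\star^{(i)}$ of $\Theta_{\langle d+2]}$ must satisfy $|\om_\star^{(i)}| \geq d+1$ (otherwise $\sI_k(\om_\star^{(i)}) \in \Theta_{\langle d+2]}$ would dominate it). Using the identity $|\om| - 2|\om|' = s_\om - |\om|'$ with $s_\om = |\om| - |\om|'$, this yields $\eta_\Theta(d+2) \leq (d+2) - \min_i |\om_\star^{(i)}|'$. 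Profiniteness now forces $\min_i |\om_\star^{(i)}|' \to \infty$ as $d \to \infty$: otherwise some fixed $q$ would admit infinitely many $\om \in \Theta$ with $|\om|' \leq q$ and arbitrarily large $|\om|$, contradicting the defining finiteness. Hence $\xi_\Theta(d+2) \to \infty$, and the hypothesis of Theorem \ref{th.envelop_stabilization} is eventually satisfied.

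Second I would stabilize the homotopy sets themselves along the embeddings $\e_{d, d+2}$ from (\ref{eq.stable}). The hypothesis $|\om|' > 2$ makes $\textup{codim}(\bar\cP_d^\Theta, \bar\cP_d) > 2$, so each $\cP_d^{\mathbf c\Theta}$ is simply connected. Corollary \ref{cor.stable_homology} (equivalently, Theorem \ref{th.main_stab}) then delivers $H_j$-isomorphisms $(\e_{d, d+2})_\ast$ in the range $j \leq \xi_\Theta(d+2)$. Simple connectivity combined with this homological stabilization, via the relative Hurewicz and Whitehead theorems, upgrades $\e_{d, d+2}$ to a $k$-connected map with $k = \xi_\Theta(d+2)$, which tends to infinity by the first step. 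Once $k > \dim(\d_1^+\hat X(\hat v)) = \dim(\hat X) - 1$, a standard obstruction-theoretic argument shows that post-composition with $\e_{d, d+2}$ is a bijection on $[(\d_1^+\hat X(\hat v),\, \d_2^-\hat X(\hat v)),\, (\cP_d^{\mathbf c\Theta}, pt)]$, identifying these sets with the direct limit $[(\d_1^+\hat X(\hat v),\, \d_2^-\hat X(\hat v)),\, (\cP_\infty^{\mathbf c\Theta}, pt)]$.

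Assembling the two steps, for $d$ large relative to $\dim(\hat X)$, Theorem \ref{th.envelop_stabilization} identifies $\mathcal{QT}^{\mathsf{emb}}_{d,\, d+2}(\hat X, \hat v; \mathbf{c}\Theta; \mathbf{c}\Theta)$ with $[(\d_1^+\hat X(\hat v),\, \d_2^-\hat X(\hat v)),\, (\cP_d^{\mathbf c\Theta}, pt)]$, and the second step identifies the latter with the stable set $[(\d_1^+\hat X(\hat v),\, \d_2^-\hat X(\hat v)),\, (\cP_\infty^{\mathbf c\Theta}, pt)]$. I expect the main obstacle to be purely quantitative bookkeeping: making the threshold guaranteeing the hypothesis of Theorem \ref{th.envelop_stabilization} and the threshold ensuring that $\e_{d, d+2}$ is sufficiently connected mesh into a single explicit $d_0 = d_0(\dim \hat X)$. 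Conceptually, nothing beyond profiniteness, simple connectivity from $|\om|' > 2$, and the cited stability theorems is required.
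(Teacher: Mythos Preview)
Your overall plan---verify $\xi_\Theta(d+2)\to\infty$ from profiniteness, invoke Theorem~\ref{th.envelop_stabilization}, and then stabilize the target homotopy sets via the connectivity of $\e_{d,d+2}$---is exactly what the paper intends (the paper records the corollary as a ``straightforward implication'' of Theorem~\ref{th.envelop_stabilization} without further detail). The second and third steps are fine.

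However, your argument for the first step contains a real error. You claim that every maximal element $\om_\star$ of $\Theta_{\langle d+2]}$ satisfies $|\om_\star|\geq d+1$ because otherwise $\sI_k(\om_\star)$ would dominate it. This reverses the order: by definition $\om\succ\sI_k(\om)$, so inserts take you \emph{down} in the poset, not up. In fact the maximal elements of $\Theta_{\langle d+2]}$ are precisely the maximal elements of $\Theta$ with norm $\leq d+2$ (since going up in $\succ$ never increases $|\om|$), and these typically have small fixed norm. For $\Theta=\langle(4)\rangle$ the sole maximal element is $(4)$ with $|(4)|=4$ for every $d\geq 4$. Consequently your next claim, that $\min_i|\om_\star^{(i)}|'\to\infty$, also fails: in the same example this minimum equals $3$ for all $d$.

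A correct argument for $\xi_\Theta(d+2)\to\infty$ runs as follows. One has $2\,\xi_\Theta(d+2)=\min_i\big((d+2)-|\om_\star^{(i)}|+2|\om_\star^{(i)}|'\big)$. If this were bounded by some $N$ along a sequence $d_j\to\infty$, the corresponding $\om_\star^{(i_j)}$ would satisfy $|\om_\star^{(i_j)}|'\leq N/2$; profiniteness then bounds $|\om_\star^{(i_j)}|$ by some fixed $M$, forcing $(d_j+2)-M\leq N$, a contradiction. With this repair your proof goes through and coincides with the paper's intended reasoning.
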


\subsection{Group structure on the ball-shaped convex pseudo-envelops}

The connected sum $\a_1 \uplus \a_2$ of convex pseudo-envelops $\a_1: X_1 \to \hat X_1, \a_2: X_2 \to \hat X_2$ can be introduced in a fashion, similar to the operation $\uplus$ in formulae (3.14) and (3.15) from \cite{K9} and formula (\ref{eq_cup_sub}) above. 

Let $D^{n-1}_+$ denote the Southern hemisphere in $\d D^n$. As with the connected sums of submersions from Section 3, there is an ambiguity about how to attach a $1$-handle $D^{n-1}_+ \times [0, 1]$ to $\d_1^+\hat X_1(\hat v_1) \coprod \d_1^+\hat X_2(\hat v_2)$ and a $1$-handle $H = D^n \times [0, 1]$ to $\hat X_1 \coprod \hat X_2$
 to form the ``coordinated" connected sums $\d_1^+\hat X_1(\hat v_1)\, \#_\d\, \d_1^+\hat X_2(\hat v_2)$ and $\hat X_1\, \#_\d\, \hat X_2$. The ambiguity arises if $\d_2^-\hat X_1(\hat v_1)$ or $\d_2^-\hat X_2(\hat v_2)$ has more than a single connected component. To avoid it, as in \cite{K9}, formula (3.14),  we need to pick a preferred connected component of $\d_2^-\hat X_1(\hat v_1)$ and $\d_2^-\hat X_2(\hat v_2)$.
Using the local models of convex vector fields (as in the proof of Lemma \ref{lem.box}), the vector fields $\hat v_1$ and $\hat v_2$ extend across $H$ so that the convexity of the extended traversing vector field $\hat v_{\#_\d}$ is enforced. For example, the lower diagram in Fig.2 is the connected sum of the upper diagram with itself.

\begin{theorem}\label{th.envelops on disks}
Let $\Theta' \subset \Theta \subset \mathbf\Om$ be closed subposets which do not contain the element $(\emptyset)$.  Assume that $d' \geq d$ and $d' \equiv d \equiv 0 \mod 2$. Let  $\hat v$ be a convex traversing vector field on the standard ball $D^{n+1}$ such that 
 $\d_1^+D^{n+1}(\hat v)$ is diffeomorphic to the standard ball $D^n$.\footnote{The constant vector field will do.}\smallskip

The group operation in the sources of the maps (\ref{bijection, pseudo-envelops on disks}) and (\ref{bijection, envelops on disks}) below is the connected sum $\uplus$ of convex envelops/convex pseudo-envelops.\smallskip

\noindent $\bullet$ There is a  group isomorphism
\begin{eqnarray}\label{bijection, pseudo-envelops on disks}
\Phi^{\mathsf{emb}}: \mathcal{QT}^{\mathsf{emb}}_{d,\, d'}(D^{n+1}, \hat v;\, \mathbf{c}\Theta; \mathbf{c}\Theta') \stackrel{\approx}{\longrightarrow} \nonumber \\
\stackrel{\approx}{\longrightarrow} \big[\big[(D^n,\, S^{n-1}),\; \e_{d,\, d'}: (\cP_d^{\mathbf c\Theta}, pt) \to (\cP_{d'}^{\mathbf c\Theta'}, pt')\big]\big].
\end{eqnarray}
The group homomorphism 
\begin{eqnarray}\label{bijection, envelops on disks}
\Phi^{\mathsf{sub}}: \mathcal{QT}^{\mathsf{sub}}_{d,\, d'}(D^{n+1}, \hat v;\, \mathbf{c}\Theta; \mathbf{c}\Theta') \stackrel{}{\longrightarrow} \nonumber \\
\stackrel{epi}{\longrightarrow} \big[\big[(D^n,\, S^{n-1}),\; \e_{d,\, d'}: (\cP_d^{\mathbf c\Theta}, pt) \to (\cP_{d'}^{\mathbf c\Theta'}, pt')\big]\big].
\end{eqnarray}
is an split epimorphism. Moreover, we have a split group extension
\begin{eqnarray}\label{eq.splitting_ker}
\qquad 1 \to \ker(\Phi^{\mathsf{sub}}) \to \mathcal{QT}^{\mathsf{sub}}_{d,\, d'}(D^{n+1}, \hat v;\, \mathbf{c}\Theta; \mathbf{c}\Theta') \stackrel{\mathcal R}{\rightarrow} \mathcal{QT}^{\mathsf{emb}}_{d,\, d'}(D^{n+1}, \hat v;\, \mathbf{c}\Theta; \mathbf{c}\Theta') \to 1.
\end{eqnarray} 

\noindent $\bullet$ For $n > 1$ all these groups are abelian. \smallskip

\noindent $\bullet$ For $n  < \max_{\om \in \Theta_{\langle d]}} \{|\om|'\}$, the groups  $\mathcal{QT}^{\mathsf{emb}}_{d, d'}(D^{n+1}, \hat v;\, \mathbf{c}\Theta; \mathbf{c}\Theta')$ are trivial. 
\end{theorem}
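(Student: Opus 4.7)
The plan is to pull back everything through Theorem \ref{th.envelops}: transfer the standard group structure on the target pair-homotopy set to the source, then verify that the transferred operation agrees with the boundary connected sum $\uplus$. When $\d_1^+D^{n+1}(\hat v)\cong D^n$, necessarily $\d_2^-D^{n+1}(\hat v)\cong S^{n-1}$, so the target of $\Phi^{\mathsf{emb}}$ reads $[[(D^n,S^{n-1}),\;\e_{d,d'}:(\cP_d^{\mathbf c\Theta},pt)\to(\cP_{d'}^{\mathbf c\Theta'},pt')]]$. For any pointed $(Y,pt)$ the relative set $[(D^n,S^{n-1}),(Y,pt)]$ is canonically $\pi_n(Y,pt)$, which is a group for $n\ge 1$ and abelian for $n\ge 2$; the quotient by the $\e$-equivalence relation inherits the same structure. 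So the target is a group with the announced commutativity, and by Theorem \ref{th.envelops} $\Phi^{\mathsf{emb}}$ is already a bijection of sets, which gives a unique group structure on $\mathcal{QT}^{\mathsf{emb}}_{d,d'}(D^{n+1},\hat v;\mathbf c\Theta;\mathbf c\Theta')$ making $\Phi^{\mathsf{emb}}$ an isomorphism.

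Next I would verify that $\uplus$ on the source is well-defined on quasitopy classes and coincides with this transferred structure. The ambiguity of the connected sum flagged just before the statement of the theorem vanishes here: for $n\ge 2$, $\d_2^-D^{n+1}(\hat v)\cong S^{n-1}$ is connected, so there is an unambiguous choice of component to which to attach the $1$-handle; for $n=1$ the two components can be interchanged by an ambient isotopy of $D^2$. The identity element is represented by the empty envelop $\emptyset\hookrightarrow D^{n+1}$, and associativity follows from the standard isotopy argument on nested handles.

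The decisive step, and the main obstacle, is to prove that $\Phi^{\mathsf{emb}}$ and $\Phi^{\mathsf{sub}}$ are group homomorphisms. I would split $D^n$ as $D^n_-\cup_{D^{n-1}} D^n_+$ along an equator and argue that, for representatives $\a_1,\a_2$, the classifying map of $\a_1\uplus\a_2$ equals the classifying map of $\a_1$ on $D^n_-$, the classifying map of $\a_2$ on $D^n_+$, and takes values in the contractible cell $\cP_d^{(\emptyset)}$ (hence equals the basepoint after the natural contraction) over a thin neighborhood of the equator that corresponds to the attached $1$-handle. This is precisely the concatenation model of the group operation in $\pi_n(\cP_d^{\mathbf c\Theta},pt)$. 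The technical difficulty is the simultaneous handling of the basepoint constraint along $S^{n-1}=\d_2^-D^{n+1}(\hat v)$ and of the combinatorial-pattern constraints on the glued envelop; this is the analogue in the convex-envelop setting of the corresponding argument in \cite{K9}, and would be transported here via the dictionary of Lemma \ref{lem.box} and Lemma \ref{lem.bijection D}.

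For the split extension \eqref{eq.splitting_ker}: every regular embedding is a submersion, so the forgetful map $\iota:\mathcal{QT}^{\mathsf{emb}}\to\mathcal{QT}^{\mathsf{sub}}$ is a group homomorphism with $\Phi^{\mathsf{sub}}\circ\iota=\Phi^{\mathsf{emb}}$. Setting $\mathcal R:=(\Phi^{\mathsf{emb}})^{-1}\circ\Phi^{\mathsf{sub}}$ gives a homomorphism with $\mathcal R\circ\iota=\mathrm{id}$, which provides both the splitting of $\Phi^{\mathsf{sub}}$ and the splitting of the short exact sequence \eqref{eq.splitting_ker}. The triviality statement for $n<\max_{\om\in\Theta_{\langle d]}}|\om|'$ is then immediate from the bijection: the subspace $\cP_d^\Theta$ has the relevant codimension in the contractible space $\cP_d$, so a standard general-position argument (as in the proof of the earlier Corollary \ref{cor.codim}) shows that every based map $(D^n,S^{n-1})\to(\cP_d^{\mathbf c\Theta},pt)$ is null-homotopic rel.\ basepoint whenever this dimensional inequality holds.
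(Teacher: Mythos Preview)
Your proposal is correct and follows essentially the same route as the paper: both reduce to Theorem~\ref{th.envelops} together with the compatibility of the boundary connected sum $\uplus$ with the box construction of Lemma~\ref{lem.box}, thereby importing the group structure from the $\pi_n$-side and the relevant arguments from \cite{K9}. Your write-up is in fact more explicit than the paper's on two points---you spell out the equatorial-splitting model for why $\Phi^{\mathsf{emb}}$ and $\Phi^{\mathsf{sub}}$ are homomorphisms, and you give the formula $\mathcal R=(\Phi^{\mathsf{emb}})^{-1}\circ\Phi^{\mathsf{sub}}$ for the retraction in (\ref{eq.splitting_ker})---whereas the paper simply points to Proposition~3.2, Proposition~3.4, Corollary~3.3 and Theorem~3.2 of \cite{K9} after recording that boxes are stable under $\#_\partial$. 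One small correction: the general-position corollary you invoke for the final triviality claim is the unlabeled corollary immediately following Theorem~\ref{th.envelops}, not Corollary~\ref{cor.codim} (which concerns invariance under deformation of $\hat v$); and Lemma~\ref{lem.bijection D} lives in the Section~3 setting of products $\R\times Y$, so the transport you want runs through Lemma~\ref{lem.box} alone.
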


\begin{proof} The main observation is that the construction of the boxes $(\hat X, \hat v) \subset ([0, 1] \times Y, \tilde v)$ in Lemma \ref{lem.box} is amenable to the connected sum operation for convex pseudo-envelops of traversing flows. That is, given two boxes $ ([0, 1] \times Y_1, \tilde v_1) \supset (\hat X_1, \hat v_1)$  and $([0, 1] \times Y_2, \tilde v_2) \supset (\hat X_2, \hat v_2)$ as in Lemma \ref{lem.box}, we get that 
$$(\hat X_1 \, \#_\d \, \hat X_2, \hat v_1 \, \#_\d \, \hat v_2) \subset ([0, 1] \times (Y_1 \, \#_\d \, Y_2) , \tilde v_1 \, \#_\d \, \tilde v_2)$$
is also a box as in that lemma. 
Therefore, all the constructions and arguments from Section 3 in \cite{K9} (like Proposition 3.4, 
Corollary 3.3, 
and Theorem 3.2) 
apply to the convex pseudo-envelops/envelops $\a: (X, \a^\dagger(\hat v)) \to (D^{n+1}, \hat v)$, where the traversing vector field $\hat v$ is convex with respect to $\d D^{n+1}$. Thus, as in formula (\ref{eq_cup_sub}) above (see formula (3.15) and Proposition 3.2 from \cite{K9}), 
for a  convex vector field $\hat v$ with $\d_1^+D^{n+1}(\hat v)$ being a smooth $n$-ball, the quasitopies 
$\mathcal{QT}^{\mathsf{emb}}_{d, d'}(D^{n+1}, \hat v;\, \mathbf{c}\Theta; \mathbf{c}\Theta')$  and $\mathcal{QT}^{\mathsf{sub}}_{d, d'}(D^{n+1}, \hat v;\, \mathbf{c}\Theta; \mathbf{c}\Theta')$ are groups. For $n > 1$ they are commutative by arguments as in Proposition 3.2 from \cite{K9}.

The last claim follows by the general position argument. 
\end{proof}

Recall that, by Corollary \ref{cor.depends_on_htype}, the set $\mathcal{QT}^{\mathsf{emb}}_{d, d'}(\hat X, \hat v; \mathbf{c}\Theta; \mathbf{c}\Theta')$ depends only on the homotopy type of the pair $(\d_1^+\hat X(\hat v),\, \d_2^-\hat X(\hat v))$; however, the corollary does not make any claims about $\mathcal{QT}^{\mathsf{sub}}_{d, d'}(\hat X, \hat v; \mathbf{c}\Theta; \mathbf{c}\Theta')$. The next proposition, in line with Proposition \ref{prop.homology_sphere} (which deals with the homology $n$-spheres, $n \geq 6$), is a hint that $\mathcal{QT}^{\mathsf{sub}}_{d, d'}(\hat X, \hat v; \mathbf{c}\Theta; \mathbf{c}\Theta')$ may also depend only on the homotopy type of the pair $(\d_1^+\hat X(\hat v),\, \d_2^-\hat X(\hat v))$.

\begin{proposition}\label{prop.choice_of_convex_field} For $n \geq 5$, assuming that the locus $\d_2^-D^{n+1}(\hat v)$ is simply-connected, the groups $\mathcal{QT}^{\mathsf{sub/emb}}_{d, d'}(D^{n+1}, \hat v;\, \mathbf{c}\Theta; \mathbf{c}\Theta')$ do not depend on the choice of the convex traversing vector field $\hat v$ on $D^{n+1}$. 

For $n \leq 3$, the groups $\mathcal{QT}^{\mathsf{sub/emb}}_{d, d'}(D^{n+1}, \hat v;\, \mathbf{c}\Theta; \mathbf{c}\Theta')$ also do not depend on the convex $\hat v$.
\end{proposition}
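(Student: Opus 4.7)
For the embedding quasitopies, the claim follows directly from Corollary \ref{cor.depends_on_htype}: it suffices to show that, under the hypotheses, the homotopy type of the pair $\bigl(\d_1^+ D^{n+1}(\hat v),\, \d_2^- D^{n+1}(\hat v)\bigr)$ is independent of $\hat v$. Since $\d_1^+ D^{n+1}(\hat v)$ is contractible (it is a deformation retract of $D^{n+1}$ via the $(-\hat v)$-flow), the homotopy type of the pair is determined by that of $\d_2^- D^{n+1}(\hat v)$, which by Proposition \ref{prop.homology_sphere} is a closed integer homology $(n-1)$-sphere. In the range $n \geq 5$ the simple-connectedness hypothesis combined with Hurewicz and Whitehead forces $\d_2^- D^{n+1}(\hat v) \simeq S^{n-1}$. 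In the range $n \leq 3$ the classification of closed manifolds in dimensions $\leq 2$ makes $\d_2^- D^{n+1}(\hat v)$ actually equal to $S^{n-1}$ without any extra assumption: $S^0$ for $n=1$, $S^1$ for $n=2$, and $S^2$ for $n=3$. Thus $\d_2^- D^{n+1}(\hat v) \simeq S^{n-1}$ in every admissible case, and the embedding part is complete.

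\textbf{Submersion case.} Corollary \ref{cor.depends_on_htype} does not directly apply to $\mathcal{QT}^{\mathsf{sub}}$, so I proceed via Corollary \ref{cor.codim} and Lemma \ref{lem.conv_fields}: it is enough to produce, for any two admissible vector fields $\hat v_0, \hat v_1 \in \mathsf{conv}(D^{n+1})$, a smooth ambient isotopy $\{\psi^t\}_{t \in [0,1]}$ of $\d D^{n+1} = S^n$ with $\psi^1\bigl(\d_1^+ D^{n+1}(\hat v_0)\bigr) = \d_1^+ D^{n+1}(\hat v_1)$, since by the second half of Lemma \ref{lem.conv_fields} such an isotopy lifts to a diffeomorphism of $D^{n+1}$ carrying $\hat v_0$-trajectories to $\hat v_1$-trajectories, and this diffeomorphism transports pseudo-envelops and their quasitopies bijectively. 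Using the homotopy identification of the previous paragraph together with the $h$-cobordism theorem applied to $\d_1^+ D^{n+1}(\hat v_i)$ with an open ball removed from its interior, I obtain for $n \geq 6$ a smooth diffeomorphism $\d_1^+ D^{n+1}(\hat v_i) \cong D^n$ whose boundary is the standard $S^{n-1}$, and the smooth generalized Schoenflies theorem in these dimensions yields the required ambient isotopy. For $n \leq 3$ the classical Jordan--Alexander results give the same conclusion directly.

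\textbf{Main obstacle.} The remaining case $n = 5$ is the delicate one, because the smooth $4$-dimensional Poincar\'e conjecture is open and $\d_2^- D^{n+1}(\hat v)$ may fail to be diffeomorphic to the standard $S^4$. My plan here is to replace smooth ambient isotopy by a topological one, using Freedman's topological $h$-cobordism theorem in dimension $5$ together with Brown's topological generalized Schoenflies theorem, and then to smooth the resulting isotopy. Smoothing theory for codimension-zero embeddings is well-behaved in dimensions $\neq 4$, so a topological isotopy of a smooth $5$-disk in $S^5$ can be promoted to a smooth one away from the potentially exotic $4$-dimensional boundary, which is enough to transport the trajectory structure. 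This promotion step is the technical heart of the argument and the main obstacle; a cleaner alternative, which I would fall back on if needed, is to bypass Lemma \ref{lem.conv_fields} in dimension $5$ by combining the embedding statement (already established via Corollary \ref{cor.depends_on_htype}) with the split extension \eqref{eq.splitting_ker} of Theorem \ref{th.envelops on disks}, since both the quotient and the target of $\Phi^{\mathsf{sub}}$ depend only on the homotopy type of the pair $(\d_1^+, \d_2^-)$, reducing the $\hat v$-invariance of $\mathcal{QT}^{\mathsf{sub}}$ to the same invariance of $\ker(\Phi^{\mathsf{sub}})$.
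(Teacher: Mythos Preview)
Your argument for the embedding quasitopies via Corollary~\ref{cor.depends_on_htype} is correct and takes a more direct, purely homotopy-theoretic route than the paper. The paper does not split into embedding and submersion cases: instead it shows once that $\d_1^+ D^{n+1}(\hat v)$ is \emph{diffeomorphic} to the standard ball $D^n$ --- by applying the smooth $h$-cobordism theorem of \cite{Mi} to $\d_1^+ D^{n+1}(\hat v)$ with an open $n$-ball deleted (for $n\geq 5$), and by low-dimensional classification together with the Poincar\'e Conjecture (for $n\leq 3$) --- then notes that any two smooth $n$-disks in $S^n$ are ambiently isotopic, and invokes (the proof of) Lemma~\ref{lem.conv_fields} to produce a diffeomorphism of $D^{n+1}$ carrying $\hat v_0$-trajectories to $\hat v_1$-trajectories. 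That single diffeomorphism handles $\mathcal{QT}^{\mathsf{emb}}$ and $\mathcal{QT}^{\mathsf{sub}}$ simultaneously. Your submersion argument for $n\geq 6$ and for $n\leq 3$ reproduces exactly this.

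The one substantive divergence is at $n=5$. You are right that the $h$-cobordism theorem in \cite{Mi} is stated for cobordisms of dimension $\geq 6$, so the step ``$W^5\cong S^4\times[0,1]$ smoothly'' needs separate justification. The paper, however, does not isolate $n=5$; it invokes \cite{Mi} uniformly for all $n\geq 5$ and concludes that $\d_1^+ D^{n+1}(\hat v)$ is a smooth $D^n$. Your two proposed workarounds are not complete as written --- the topological-Schoenflies-plus-smoothing route is only sketched, and the split-extension route via \eqref{eq.splitting_ker} still leaves you to establish the $\hat v$-independence of $\ker(\Phi^{\mathsf{sub}})$, which is not obviously easier than the original claim. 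So for the submersion case at $n=5$ your argument is unfinished, though you have been more explicit than the paper about where the difficulty sits.
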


\begin{proof}
For a convex $\hat v$, the locus $\d_1^+\hat X(\hat v)$ is a deformation retract of $D^{n+1}$ and thus has a homotopy type of a point. By the Poincar\'{e} duality,  $\d_2^-D^{n+1}(\hat v) = \d(\d_1^+\hat X(\hat v))$ is a homology $(n-1)$-sphere $\Sigma^{n-1} \subset \d D^{n+1}$. 
Let us delete a small smooth ball $B^n$ from the interior of $\d_1^+\hat X(\hat v)$. We denote its complement by $W^n$. For $n \geq 5$, assuming that $\pi_1(\d_2^-D^{n+1}(\hat v)) =1$, we may apply the smooth $h$-cobordism theorem (see \cite{Mi}) to $W$ to conclude that it is diffeomorphic to the product $S^{n-1} \times [0, 1]$. Therefore, $\d_1^+D^{n+1}(\hat v) $ is a smooth ball $D^n \subset S^n$. Any two regular embeddings $D^n \hookrightarrow S^n$ are diffeotopic. By the proof of Lemma \ref{lem.conv_fields}, any diffeotopy of $S^n$ which maps $\d_1^+D^{n+1}(\hat v_1) $ to $\d_1^+D^{n+1}(\hat v_2)$ extends to a diffeotopy of $D^{n+1}$ that maps the $\hat v_1$-trajectories to $\hat v_2$-trajectories, while preserving their orientation. As a result, for $n \geq 5$, the group $\mathcal{QT}^{\mathsf{emb}}_{d, d'}(D^{n+1}, \hat v;\, \mathbf{c}\Theta; \mathbf{c}\Theta')$ does not depend on the choice of $\hat v$, as long as $\pi_1(\d_2^-D^{n+1}(\hat v)) =1$.
\smallskip

In small dimensions $n \leq 3$, the contractibility of $\d_1^+D^{n+1}(\hat v)$ implies that it is the standard smooth $n$-ball. The case $n =1$ is obvious. For $n=2$, if the domain $\d_1^+\hat X(\hat v) \subset S^2$ has a boundary that is a homology $1$-sphere, then the domain is the $2$-ball, and any $2$-balls in $S^2$ are isotopic.
 For $n=3$, if the domain $\d_1^+D^{4}(\hat v) \subset S^3$ has a smooth boundary that is a homology $2$-sphere. By the classification of $2$-surfaces, it follows that $\d_2^-D^{4}(\hat v)$ is the standard $2$-sphere. By the solution of the $3$-dimensional Poincar\'{e} Conjecture \cite{P1}-\cite{P3}, the contractible domain $\d_1^+D^{4}(\hat v)$ is with the spherical boundary is the $3$-ball, and any two $3$-balls in $S^3$ are isotopic. 
Thus, by Lemma \ref{lem.conv_fields}, the smooth isotopy type of the locus $\d_1^+D^{n+1}(\hat v) \subset S^n$ determines the smooth topological type of the foliation $\mathcal L(\hat v)$. Since, any two standard $n$-balls $\d_1^+\hat X(\hat v_1)$ and $\d_1^+\hat X(\hat v_2)$ are isotopic in $S^n$, the  two statements of the proposition are validated. 
The difficult case $n=4$ is wide open.
\end{proof}

Combining Theorem \ref{th.envelops} and Theorem \ref{th.envelops on disks}, we get the following claim.

\begin{corollary}\label{WORK} 
Let $\hat v^\parallel$ be a constant vector field on the ball $D^{n+1} \subset \R^{n+1}$, and let $\hat v$ be a convex vector field on $\hat X$. 
Put $r = \#(\pi_0(\d_2^-\hat X(\hat v)))$. With the help of the maps from (\ref{bijection, pseudo-envelops on disks}) and (\ref{bijection, envelops on disks}), the groups $$\mathcal G^{\mathsf{emb}} := \big (\mathcal{QT}^{\mathsf{emb}}_{d, d'}(D^{n+1}, \hat v^\parallel;\, \mathbf{c}\Theta; \mathbf{c}\Theta')\big )^r\; \text{ and } \; \mathcal G^{\mathsf{sub}} := \big(\mathcal{QT}^{\mathsf{sub}}_{d, d'}(D^{n+1}, \hat v^\parallel;\, \mathbf{c}\Theta; \mathbf{c}\Theta')\big )^r$$ are represented in the group 
$$\big( \pi_n(\cP_d^{\mathbf c\Theta}, pt)\big/ \ker\big\{(\e_{d, d'})_\ast: \pi_n(\cP_d^{\mathbf c\Theta}, pt) \to \pi_n(\cP_{d'}^{\mathbf c\Theta'}, pt')\big\}\big)^r.$$ We denote by $\Psi^{\mathsf{emb}}$ and $\Psi^{\mathsf{sub}}$ these two representations (the first one is an isomorphism).\smallskip

Then the maps $\Phi^{\mathsf{sub}}(\hat X, \hat v)$ in (\ref{eq.bijection, pseudo-envelops}) and $\Phi^{\mathsf{emb}}(\hat X, \hat v)$ in (\ref{eq.bijection, envelops}) from Theorem \ref{th.envelops} are \emph{equivariant} with respect to the $\mathcal G^{\mathsf{sub}}$- and $\mathcal G^{\mathsf{emb}}$-actions on their source sets and the $\Psi^{\mathsf{sub}}(\mathcal G^{\mathsf{sub}})$- and $\Psi^{\mathsf{emb}}(\mathcal G^{\mathsf{emb}})$-actions on their target sets. \hfill $\diamondsuit$
 \end{corollary}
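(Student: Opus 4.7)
The plan is to trace the connected-sum operation through the bijection/surjection of Theorem \ref{th.envelops} and reduce equivariance to a ``pinch to a wedge'' argument in homotopy theory. First, for each connected component $\kappa_i$ of $\d_2^-\hat X(\hat v)$, $i = 1, \dots, r$, I would fix a basepoint $p_i \in \kappa_i$ and define a partial connected-sum operation
\[
\uplus_{\kappa_i}:\; \mathcal{QT}^{\mathsf{sub/emb}}_{d,d'}(D^{n+1}, \hat v^\parallel;\, \mathbf{c}\Theta;  \mathbf{c}\Theta') \times \mathcal{QT}^{\mathsf{sub/emb}}_{d,d'}(\hat X, \hat v;\, \mathbf{c}\Theta;  \mathbf{c}\Theta') \to \mathcal{QT}^{\mathsf{sub/emb}}_{d,d'}(\hat X, \hat v;\, \mathbf{c}\Theta;  \mathbf{c}\Theta')
\]
by attaching a $1$-handle to $\hat X\coprod D^{n+1}$ at $p_i$ and a paired point of $\d D^{n+1}$, and extending $\hat v \coprod \hat v^\parallel$ across the handle in the convex manner used in Theorem \ref{th.envelops on disks}. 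Well-definedness at the level of quasitopies would be verified exactly as for formula (\ref{eq_cup_sub}) and Proposition~3.2 of \cite{K9}, now inside a box via Lemma \ref{lem.box}. Since the $\kappa_i$ are disjoint, the $r$ operations commute and together yield the desired $\mathcal G^{\mathsf{sub/emb}}$-actions.

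Next, I would identify the induced action on the target of $\Phi$. Packaging $(\hat X, \hat v)$ as a box $[0,1] \times Y$ with $Y \simeq \d_1^+\hat X(\hat v)$ and $\d Y \simeq \d_2^-\hat X(\hat v)$ via Lemma \ref{lem.box}, the handle attachment at $p_i$ becomes a boundary connected sum $Y \#_\d D^n$ at a single point of the $i$-th component of $\d Y$. The elementary homotopy fact
\[
(Y \#_\d D^n)\big/\,\d(Y\#_\d D^n) \;\simeq\; \bigl(Y/\d Y\bigr)\vee S^n,
\]
where the $S^n$-summand is pinched at $\kappa_i$, identifies the classifying map of $\a \uplus_{\kappa_i} \a_1$ as restricting on the first summand to $\Phi^{\mathsf{sub/emb}}(\a)$ and on the $S^n$-summand to the map representing $\Phi^{\mathsf{sub/emb}}(\a_1) \in \pi_n(\mathcal P_d^{\mathbf c \Theta}, pt)\big/\ker((\e_{d,d'})_\ast)$, using the isomorphism at the target of (\ref{bijection, pseudo-envelops on disks}) supplied by Theorem \ref{th.envelops on disks}. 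Consequently $\Phi(\a \uplus_{\kappa_i} \a_1)$ is precisely the $i$-th factor action of $\Psi(\a_1)$ on $\Phi(\a)$ in the target group.

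The main obstacle is showing that all indicated identifications are canonical at the level of quasitopies and do not depend on auxiliary choices (handle attaching paths, framings, the choice of box in Lemma \ref{lem.box}). This is handled by the same flexibility arguments used in \cite{K9}, Proposition 3.2: any two choices of the $1$-handle are connected by an isotopy in $\hat X \coprod D^{n+1}$ that avoids the prescribed tangency patterns, producing a cobordism $W$ with a submersion $A$ as in Definition \ref{quasi_isotopy_of_fields} that realizes the required quasitopy. Once equivariance is established for embeddings, the statement for submersions follows because $\Phi^{\mathsf{sub}}$ was constructed in Theorem \ref{th.envelops} as the composition of box encapsulation with the Theorem~3.2 construction of \cite{K9}, and both steps commute with $\uplus_{\kappa_i}$. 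Commutativity of the $r$-fold action follows from the pairwise disjointness of the $\kappa_i$, completing the proof sketch.
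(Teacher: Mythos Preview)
Your proposal is correct and follows essentially the same approach as the paper, which simply states that the corollary follows by combining Theorem~\ref{th.envelops} and Theorem~\ref{th.envelops on disks} and offers no further details. You have spelled out precisely the mechanism behind that combination: the connected-sum action at each component of $\d_2^-\hat X(\hat v)$, the box encapsulation via Lemma~\ref{lem.box}, and the pinch-to-wedge identification $(Y\#_\d D^n)/\d(Y\#_\d D^n)\simeq (Y/\d Y)\vee S^n$ that makes the equivariance transparent on the homotopy-theoretic target.
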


\subsection{Quasitopies of envelops with generic 
combinatorics $\Theta$ and $d' = d$}
From now and until Subsection 4.9, each result about quasitopies $\mathcal{QT}^{\mathsf{sub/emb}}_{d, d'}(\hat X, \hat v; \mathbf{c}\Theta; \mathbf{c}\Theta')$ of convex envelops is a recognizable immage of a similar result from \cite{K9} about the quasitopies  $\mathcal{QT}^{\mathsf{imm/emb}}_{d, d'}(Y, \d Y; \mathbf{c}\Theta, (\emptyset), \mathbf{c}\Theta')$ of immersions/embedding into the products $\R \times Y$.

\begin{theorem}\label{th.traversally_generic_fields}   Let $(\hat X, \hat v)$ be a convex pair. Let $d, k$ be natural numbers such that  $2 < k < d$ and $d \equiv 0 \mod 2$. 
Put $\mathbf \Om_{|\sim|' \leq k-1} =_{\mathsf{def}} \mathbf c \mathbf \Om_{|\sim|' \geq k}$.
Then we get a bijection\footnote{see (\ref{eq.A-bouquet}) for the definition of the number $A(d, k) := A(d, k, 0)$}
\begin{eqnarray}\label{envelops, bouquet} 
\Phi: \; \mathcal{QT}^{\mathsf{emb}}_{d, d}(\hat X, \hat v;\, \mathbf \Om_{|\sim|' \leq k-1};  \mathbf \Om_{|\sim|' \leq k-1})\; \approx \; \big[\big(\d_1^+\hat X(\hat v),\, \d_2^-\hat X(\hat v)\big),\; \big(\bigvee_{\ell =1}^{A(d,\, k)} S^{k-1}_\ell,\; \star \big)\big]. \nonumber
\end{eqnarray}
\end{theorem}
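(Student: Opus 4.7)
My strategy is to assemble the statement from three ingredients already prepared in Sections~2 and~4: the classifying bijection of Theorem~\ref{th.envelops}, the bouquet description of $\bar{\mathcal P}_d^\Theta$ in Proposition~\ref{prop.skeleton}, and an Alexander/Hurewicz passage from $\bar{\mathcal P}_d^\Theta$ to $\mathcal P_d^{\mathbf c\Theta}$.

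First, I would specialize Theorem~\ref{th.envelops} to $d'=d$ and $\Theta' = \Theta = \mathbf\Om_{|\sim|'\geq k}$. Since the stabilization map $\e_{d,d}$ is the identity, the homotopy set with ``double brackets'' $[[(\d_1^+\hat X(\hat v),\d_2^-\hat X(\hat v)),\,\e_{d,d}:(\cP_d^{\mathbf c\Theta},pt)\to(\cP_d^{\mathbf c\Theta},pt)]]$ collapses to the ordinary set $[(\d_1^+\hat X(\hat v),\d_2^-\hat X(\hat v)),(\cP_d^{\mathbf c\Theta},pt)]$. Thus Theorem~\ref{th.envelops} already furnishes a canonical bijection
\[
\mathcal{QT}^{\mathsf{emb}}_{d,d}(\hat X,\hat v;\mathbf\Om_{|\sim|'\leq k-1};\mathbf\Om_{|\sim|'\leq k-1})\;\approx\;[(\d_1^+\hat X(\hat v),\d_2^-\hat X(\hat v)),(\cP_d^{\mathbf c\Theta},pt)].
\]
So it suffices to prove that $\cP_d^{\mathbf c\Theta}$ is homotopy equivalent to $\bigvee_{\ell=1}^{A(d,k)}S^{k-1}_\ell$, carrying the distinguished point $pt$ to the wedge point $\star$.

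Next, I would apply Proposition~\ref{prop.skeleton} with $q=0$ (so that $A(d,k,0)=A(d,k)$), obtaining that $\bar\cP_d^\Theta$ has the homotopy type of a wedge of $A(d,k)$ spheres of dimension $d-k$. Using the Alexander duality of Theorem~\ref{thA} on $\bar\cP_d\cong S^d$, this forces the reduced integral (co)homology of $\cP_d^{\mathbf c\Theta}$ to be concentrated in a single degree $k-1$ and to be free abelian of rank $A(d,k)$. Since $k>2$ means that $\mathcal P_d^\Theta$ has codimension at least $3$ in the contractible affine space $\mathcal P_d$, a transversality/general-position argument shows that $\cP_d^{\mathbf c\Theta}$ is simply connected.

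Now I would invoke the Hurewicz theorem: simple connectivity together with the vanishing of $\tilde H_j(\cP_d^{\mathbf c\Theta};\Z)$ for $j<k-1$ yields $\pi_{k-1}(\cP_d^{\mathbf c\Theta},pt)\approx \Z^{A(d,k)}$. Choosing $A(d,k)$ generators and representing them by pointed maps $S^{k-1}\to \cP_d^{\mathbf c\Theta}$, I assemble a map $\Psi\colon \bigvee_{\ell=1}^{A(d,k)} S^{k-1}_\ell\to \cP_d^{\mathbf c\Theta}$ which induces an isomorphism on $H_{k-1}$ and hence on all integral homology groups (both sides being trivial in every other positive degree). As both the source and the target are simply connected and have the homotopy type of CW complexes (the target being a smooth open manifold), Whitehead's theorem promotes $\Psi$ to a homotopy equivalence. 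Composing the bijection of the first paragraph with $\Psi_\ast$ on pointed homotopy classes yields the desired bijection $\Phi$.

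The only delicate point is the promotion from homology to a genuine homotopy equivalence $\Psi$; I would either carry it out as sketched via Hurewicz/Whitehead (using $k>2$ for simple connectivity), or alternatively cite the bouquet description from \cite{KSW2} directly for the space $\cP_d^{\mathbf c\Theta}$ in the simply connected range. Everything else is bookkeeping once Theorem~\ref{th.envelops} and Proposition~\ref{prop.skeleton} are in place.
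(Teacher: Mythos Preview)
Your proposal is correct and follows essentially the same approach as the paper's proof: specialize Theorem~\ref{th.envelops} to $d'=d$, $\Theta'=\Theta$ to land in homotopy classes of maps into $(\cP_d^{\mathbf c\Theta},pt)$, then identify $\cP_d^{\mathbf c\Theta}$ with a bouquet of $(k-1)$-spheres via Proposition~\ref{prop.skeleton} and Alexander duality. The only difference is that the paper outsources the passage from ``$\bar{\cP}_d^\Theta$ is a bouquet of $(d-k)$-spheres'' to ``$\cP_d^{\mathbf c\Theta}$ is a bouquet of $(k-1)$-spheres'' to Proposition~3.10 of \cite{K9}, whereas you spell it out directly via simple connectivity (from $\mathrm{codim}\geq k>2$), Hurewicz, and Whitehead---which is exactly the argument behind that cited proposition.
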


\begin{proof} The theorem is based on Proposition 3.10 from \cite{K9}.  For $d > k > 2$, Proposition \ref{prop.skeleton}, being combined with the Alexander duality, describes the homotopy type of $\mathcal P_d^{\mathbf \Om_{|\sim|' \leq k-1}} = \mathcal P_d^{\mathbf{c\Om}_{|\sim|' \geq k}}$ as a bouquet $\bigvee_{\ell =1}^{A(d,\, k)} S^{k-1}_\ell$ of $(k-1)$-spheres. The space $\mathcal P_d^{(\emptyset)}$ is contractible to the point $\star$. Thus, by Theorem \ref{th.envelops} (based on Theorem 3.1 from \cite{K9}),  
the claim follows.  
\end{proof}

We consider now the ``combinatorially generic" case of convex pseudo-envelops $\a: (X, \a^\dagger(\hat v)) \to (\hat X, \hat v)$ for which $\hat v$ is {\sf traversally generic} with respect to $\a(\d X)$.  By Definition \ref{def.traversally_generic}, any such $\a$ has tangency patterns that belong to the poset $\mathbf \Om_{|\sim|' \leq n}$, where $n = \dim \hat X -1$. 

\begin{corollary}\label{cor.submersions_bouquets} Let $(\hat X, \hat v)$ be a convex pair, $\dim \hat X = n+1$. For $d > n > 2$, we get a bijection  
$$\Phi: \, \mathcal{QT}^{\mathsf{emb}}_{d, d}(\hat X, \hat v;\, \mathbf \Om_{|\sim|' \leq n};  \mathbf \Om_{|\sim|' \leq n}) \approx \Z^{A(d,\, n+1)}.$$ 
These $\Z$-valued invariants of convex envelops are delivered by the degrees of the maps $$\{\Phi_\ell: \d_1^+\hat X(\hat v)\big/\d_2^-\hat X(\hat v) \to S^{n}_\ell\}_{\ell \in [1,\; A(d, n+1)]},$$ to the individual spheres, induced by the map $\Phi$ from Theorem \ref{th.traversally_generic_fields}. 

In particular, the $(d,d; \mathbf \Om_{|\sim|' \leq n}, \mathbf \Om_{|\sim|' \leq n})$-quasitopy class of any \emph{traversally generic} convex envelop $\a: (X, \a^\dagger(\hat v))  \to (\hat X, \hat v)$ is determined by the collection of such degrees.
\end{corollary}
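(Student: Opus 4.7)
The plan is to specialize Theorem \ref{th.traversally_generic_fields} to $k = n+1$. Since $d > n > 2$ gives $2 < n+1 < d$ and $d$ is (by parity) even, that theorem supplies a bijection between the quasitopy set and $[(\d_1^+\hat X(\hat v),\, \d_2^-\hat X(\hat v)),\, (W, \star)]$, where $W := \bigvee_{\ell=1}^{A(d,n+1)} S^n_\ell$. The entire corollary thus collapses onto a homotopy-theoretic computation of this target.

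Next I would evaluate that homotopy set by obstruction theory. The bouquet $W$ is $(n-1)$-connected, so the Hurewicz theorem identifies $\pi_n(W) \cong H_n(W; \Z) = \Z^{A(d, n+1)}$, with canonical basis furnished by the collapse projections $q_\ell: W \to S^n_\ell$. Writing $Y := \d_1^+\hat X(\hat v)$, the pair $(Y, \d Y)$ has relative CW-dimension $n$, matching the connectivity of $W$; the standard obstruction-theoretic argument then gives a bijection
$$[(Y, \d Y),\, (W, \star)] \;\cong\; H^n\bigl(Y, \d Y;\, \pi_n(W)\bigr) \;\cong\; H^n(Y, \d Y;\, \Z)^{A(d, n+1)}.$$
Because $Y$ is a deformation retract of $\hat X$ under the convex $(-\hat v)$-flow, it is a connected compact $n$-manifold, and Lefschetz duality (using the orientability inherited from $\hat X$) yields $H^n(Y, \d Y;\, \Z) \cong H_0(Y;\, \Z) \cong \Z$, producing the promised identification with $\Z^{A(d, n+1)}$.

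Finally, to match the explicit degree description, I would unwind the bijection: for each $\ell$, the composition $\Phi_\ell := q_\ell \circ \Phi$ is a map $(Y, \d Y) \to (S^n_\ell, \star)$, and under the isomorphism $[(Y, \d Y), (S^n, \star)] \cong H^n(Y, \d Y;\, \Z) \cong \Z$ --- the classical Hopf theorem for pairs --- the assigned integer is exactly its Brouwer degree. The last assertion of the corollary then follows because, under the bijection of Theorem \ref{th.traversally_generic_fields}, the class of a traversally generic envelop maps to the homotopy class of $\Phi$, and this class is rigidified by the tuple $(\deg \Phi_1, \dots, \deg \Phi_{A(d, n+1)})$. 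The main subtlety to flag is orientability of $Y$: without it, $H^n(Y, \d Y;\, \Z)$ may drop to $\Z/2$ or $0$, so the $\Z^{A(d, n+1)}$ answer requires the orientable regime where degree-valued invariants are meaningful; assuming $\hat X$ is orientable (the setting in which degrees live in $\Z$), the remainder is routine.
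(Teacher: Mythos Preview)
Your proof is correct and follows essentially the same route as the paper: specialize Theorem \ref{th.traversally_generic_fields} to $k=n+1$, then identify the resulting homotopy set with $\Z^{A(d,n+1)}$ via degrees; where the paper simply cites Proposition 3.6 of \cite{K9} for this last step, you give a self-contained obstruction-theoretic argument together with Lefschetz duality, and you rightly flag the orientability hypothesis on $\hat X$ (hence on $\d_1^+\hat X(\hat v)$) that the paper leaves implicit. One minor slip: from $d>n$ you only get $n+1\le d$, not $n+1<d$, so the corollary tacitly needs $d>n+1$ to fall under the hypotheses of Theorem \ref{th.traversally_generic_fields}.
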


\begin{proof} Since $\hat X$ is connected and $\hat v$ is convex, the locus $\d_1^+\hat X$ is connected as well. 

We repeat  the arguments from Proposition 3.6 in \cite{K9}. 
For $n > 2$, the homotopy classes of the classifying maps 
$\Phi^\a:  (\d_1^+\hat X(\hat v),\, \d_2^-\hat X(\hat v)) \to \big(\mathcal P_d^{\mathbf \Om_{|\sim|' \leq n}},\, \mathcal P_d^{(\emptyset)}\big)$ 
are in $1$-to-$1$ correspondence with homotopy classes of the corresponding maps 
$\tilde\Phi^\a:  (\d_1^+\hat X(\hat v),\, \d_2^-\hat X(\hat v)) \to \big(\bigvee_{\ell =1}^{A(d,\, n+1)} S^{n}_\ell,\, \star \big)$. For $n \geq 2$, the latter ones are detected by the degrees of the maps $\{\Phi^\a_\ell\}$ to the individual spheres $\{S^{n}_\ell\}$.
\end{proof}

\begin{example} \emph{Take $d= 6$ and $n=3$. Then $\bar{\mathcal P}_6^{\mathbf\Om_{|\sim|' \geq 4}}$ consist of a single $0$-dimensional cell (the $``\infty" \in \bar{\mathcal P}_6$), one $1$-dimensional cell, labelled by $\om = (6)$, and five $2$-dimensional cells, labelled by $\om = (51), (15), (42), (24), (33)$. Hence, $A(6, 4) = \chi(\bar{\mathcal P}_6^{\mathbf\Om_{|\sim|' \geq 4}}) -1 = 4$. The space $\bar{\mathcal P}_6^{\mathbf\Om_{|\sim|' \geq 4}}$ has a homotopy type of a bouquet $\bigvee_{\ell =1}^4 S^2$ of four $2$-spheres. By the Alexander duality, $\mathcal P_6^{\mathbf c\mathbf\Om_{|\sim|' \geq 4}} = \mathcal P_6^{\mathbf\Om_{|\sim|' \leq 3}}$ has a homotopy type of a bouquet $\bigvee_{\ell =1}^4 S^3$ of four $3$-spheres. }

\emph{We pick a convex and traversing vector field $\hat v$ on $D^4$. Recall that $\d_1^+D^4(\hat v)$ is contractible and thus $\d_2^-D^4(\hat v)$ a homology $2$-sphere, which implies that $\d_2^-D^4(\hat v)$ is diffeomorphic to $S^2 \subset S^3$.  Thus $\d_1^+D^4(\hat v)$, by \cite{P1}, \cite{P2}, is diffeomorphic to the ball $D^3$. Therefore, for any convex $\hat v$, we get the group isomorphism 
$$\mathcal{QT}^{\mathsf{emb}}_{6, 6}(D^4, \hat v;\, \mathbf \Om_{|\sim|' \leq 3};  \mathbf \Om_{|\sim|' \leq 3}) \approx \pi_3(\bigvee_{\ell =1}^4 S^3_\ell) \approx  \Z^4. \quad 
$$
As a result, any element $[\a] \in \mathcal{QT}^{\mathsf{sub}}_{6, 6}(D^4, \hat v;\, \mathbf \Om_{|\sim|' \leq 3};  \mathbf \Om_{|\sim|' \leq 3})$ generates four integer-valued characteristic invariants. For embeddings $\a$, they determine $[\a]$.}

\emph{At the same time,  by Corollary \ref{cor.n,n+1} below, $\mathcal{QT}^{\mathsf{emb}}_{6, 8}(D^4, \hat v;\, \mathbf \Om_{|\sim|' \leq 3};  \mathbf \Om_{|\sim|' \leq 4}) = 0$.}
\hfill $\diamondsuit$
\end{example}

The following claim contrasts  Corollary \ref{cor.submersions_bouquets}.

\begin{corollary}\label{cor.n,n+1} Under the hypotheses of Corollary \ref{cor.submersions_bouquets}, including $d > n > 2$, the set $\mathcal{QT}^{\mathsf{emb}}_{d,\, d+2}(\hat X, \hat v;\, \mathbf \Om_{|\sim|' \leq n}; \mathbf \Om_{|\sim|' \leq n+1})$ consists of a single element, represented by $X = \emptyset$. \smallskip

In particular, the $(d,d+2; \mathbf \Om_{|\sim|' \leq n}, \mathbf \Om_{|\sim|' \leq n+1})$-quasitopy class of any traversally generic convex envelop $\a: (X, \a^\dagger(\hat v))  \to (\hat X, \hat v)$ is trivial.
\end{corollary}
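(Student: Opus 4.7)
The plan is to identify the quasitopy set via Theorem \ref{th.envelops} and then show it is a singleton by a codimension/general-position argument applied to the enlarged target $\cP_{d+2}^{\mathbf c\Theta'}$.

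First, I would apply Theorem \ref{th.envelops} with $\Theta = \mathbf\Om_{|\sim|' \geq n+1}$ and $\Theta' = \mathbf\Om_{|\sim|' \geq n+2}$ to get a canonical bijection between $\mathcal{QT}^{\mathsf{emb}}_{d,\, d+2}(\hat X, \hat v;\, \mathbf\Om_{|\sim|' \leq n};\, \mathbf\Om_{|\sim|' \leq n+1})$ and the triple-homotopy set
$$\bigl[\bigl[(\d_1^+\hat X(\hat v), \d_2^-\hat X(\hat v)),\; \e_{d, d+2}: (\cP_d^{\mathbf\Om_{|\sim|' \leq n}}, pt) \to (\cP_{d+2}^{\mathbf\Om_{|\sim|' \leq n+1}}, pt')\bigr]\bigr].$$
By Definition \ref{def.triples}, two classes on the left are identified as soon as their compositions with $\e_{d, d+2}$ are homotopic as maps of pairs into $(\cP_{d+2}^{\mathbf\Om_{|\sim|' \leq n+1}}, pt')$. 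So it suffices to show that every continuous map from $(\d_1^+\hat X(\hat v), \d_2^-\hat X(\hat v))$ into $(\cP_{d+2}^{\mathbf\Om_{|\sim|' \leq n+1}}, pt')$ is null-homotopic rel $\d_2^-\hat X(\hat v)$; then the unique quasitopy class is the one represented by the constant classifying map, which corresponds to $X = \emptyset$.

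Second, I would estimate the connectivity of the enlarged target. The subspace $\cP_{d+2}^{\mathbf\Om_{|\sim|' \geq n+2}} \subset \cP_{d+2}$ is a closed stratified set whose strata $\mathring\sR_{d+2}^\omega$, for $\omega \in \mathbf\Om_{|\sim|' \geq n+2}$, all have codimension $|\omega|' \geq n+2$ in the affine space $\cP_{d+2} \cong \R^{d+2}$ (as recalled in Section 2). Since $\cP_{d+2}$ is contractible and the removed stratified set has codimension at least $n+2$, a routine transversality argument shows that the complement $\cP_{d+2}^{\mathbf\Om_{|\sim|' \leq n+1}}$ is $n$-connected: any map of a $k$-sphere into it bounds a $(k+1)$-disk in $\cP_{d+2}$, and that disk can be perturbed off the forbidden stratified set rel its boundary whenever $k+1 < n+2$, i.e., $k \leq n$.

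Finally I would feed this into obstruction theory. Since $\dim \hat X = n+1$, the locus $\d_1^+\hat X(\hat v)$ is an $n$-dimensional manifold with boundary $\d_2^-\hat X(\hat v)$, so the pair has relative CW-dimension at most $n$. The obstructions to a null-homotopy of a map of this pair into the $n$-connected pointed space $(\cP_{d+2}^{\mathbf\Om_{|\sim|' \leq n+1}}, pt')$ lie in $H^k(\d_1^+\hat X(\hat v), \d_2^-\hat X(\hat v);\, \pi_k(\cP_{d+2}^{\mathbf\Om_{|\sim|' \leq n+1}}))$ for $k \leq n$, all of which vanish. Hence every such map is null-homotopic rel its base subspace and the claim follows. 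The only step needing any care is the transversality argument establishing $n$-connectivity of the complement, but this is routine because the codimension bound $n+2$ strictly exceeds the homotopy dimension $n+1$ at issue.
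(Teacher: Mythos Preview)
Your proposal is correct and follows essentially the same approach as the paper: reduce via Theorem \ref{th.envelops} to maps of the $n$-dimensional pair $(\d_1^+\hat X(\hat v),\,\d_2^-\hat X(\hat v))$ into the enlarged target, and then argue that this target is $n$-connected so that all such maps are null-homotopic. The only cosmetic difference is that the paper obtains $n$-connectivity by invoking the explicit homotopy type of $\cP_{d+2}^{\mathbf c\mathbf\Om_{|\sim|'\geq n+2}}$ as a bouquet of $(n+1)$-spheres (via Proposition \ref{prop.skeleton} and Alexander duality, as in Theorem \ref{th.traversally_generic_fields}), whereas you derive it directly from the codimension bound $\geq n+2$ via transversality; both routes give the same conclusion.
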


\begin{proof} Consider a convex pseudo-envelop $\a: (X, \a^\dagger(\hat v)) \to (\hat X, \hat v)$ as in Theorem \ref{th.traversally_generic_fields}. 

The claim is based on the observation that any map from the $n$-dimensional $CW$-complex $\d_1^+\hat X(\hat v)/ \d_2^-\hat X(\hat v)$ to $\mathcal P_d^{\mathbf {c \Om}_{|\sim|' \geq n}}$---homotopically a  bouquet of $n$-spheres--- is null-homptopic in $\mathcal P_{d+2}^{\mathbf {c \Om}_{|\sim|' \geq n+1}}$, since the latter space has the homotopy type of a  bouquet of $(n+1)$-spheres.  Again, by Theorem \ref{th.envelops}, the pseudo-envelop $\a$ is null-quasitopic. 
\end{proof}

\subsection{From inner framed cobordisms of $\d_1^+\hat X(\hat v)$ to quasitopies of $k$-flat convex envelops}
Theorem \ref{th.traversally_generic_fields} suggests a somewhat unexpected connection between quasitopies of  certain convex envelops $\a: X \to \hat X$ and inner framed cobordisms of the manifold $\d\hat X$. \smallskip

Let $Y$ be a compact smooth $n$-manifold. For $k-1 \leq n$, we associate with $Y$ the set of {\sf inner framed cobordisms} $\mathcal{FB}^{k-1}_\Xi(Y)$. These cobordisms are based on codimension $k-1$ smooth  
closed submanifolds  $Z$ of $Y$ of the form $Z = \coprod_{\s=1}^{A(d, k, q)} Z_\s$, where the number $A(d, k, q)$ is introduced in (\ref{eq.A-bouquet}). 
The normal $(k-1)$-bundle $\nu(Z, Y)$ is required to be framed. The disjoint ``components" $\{Z_\s\}$ of $Z$ are marked with different {\sf colors} $\s$ from a {\sf pallet} $\Xi$ of cardinality $A(d, k, q)$.

 We have seen in \cite{K9}, Proposition 3.11, 
 that the inner framed $\Xi$-colored codimension $k-1$ bordisms of the $n$-dimensional manifold $Y$ produce, via the Thom construction, quite special $k$-flat embeddings $\b: M \to \R \times Y$, where $\dim M =\dim Y$. The analogous mechanism, with the help of Lemma \ref{lem.box} and Theorem \ref{th.traversally_generic_fields}, generates special $k$-{\sf flat} envelops (see Definition \ref{def.k-flat_envelops}). 

\begin{proposition}\label{prop.inner_bordisms, envelops} Let $\hat X$ be a smooth compact connected $(n+1)$-dimensional manifold, equipped with a convex traversing vector field $\hat v$.  Let 
$k \in [3, n+1]$, $k < d$, and $d \equiv 0 \mod 2$. 
Then the Thom construction  delivers a bijection
$$Th: \mathcal{FB}_\Xi^{k-1}(\d_1^+\hat X(\hat v)) \approx \mathcal{QT}^{\mathsf{emb}}_{d, d}(\hat X, \hat v;\, \mathbf c\mathbf \Om_{|\sim|' \geq k};  \mathbf c\mathbf \Om_{|\sim|' \geq k}),$$ where  $\mathcal{FB}_\Xi^{k-1}(\d_1^+\hat X(\hat v))$ denotes the set of inner framed $\Xi$-colored $(n - k+1)$-dimensional bordisms of the space $\d_1^+\hat X(\hat v)$. Here $\#\Xi = A(d, k)$ (see \ref{eq.A-bouquet}). \hfill $\diamondsuit$
\end{proposition}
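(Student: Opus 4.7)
My plan is to realize $Th$ as the composition of the bijection $\Phi$ from Theorem \ref{th.traversally_generic_fields} with the classical Thom--Pontryagin construction for framed colored bordisms. Since $\mathbf{c}\mathbf\Om_{|\sim|' \geq k} = \mathbf\Om_{|\sim|' \leq k-1}$, Theorem \ref{th.traversally_generic_fields} already delivers a bijection
\[
\Phi:\; \mathcal{QT}^{\mathsf{emb}}_{d, d}(\hat X, \hat v;\, \mathbf{c}\mathbf \Om_{|\sim|' \geq k};\,  \mathbf{c}\mathbf \Om_{|\sim|' \geq k}) \;\stackrel{\approx}{\longrightarrow}\; \big[\big(\d_1^+\hat X(\hat v),\, \d_2^-\hat X(\hat v)\big),\; \big(\textstyle\bigvee_{\ell =1}^{A(d,k)} S^{k-1}_\ell,\, \star \big)\big].
\]
The Thom--Pontryagin construction provides a second bijection identifying the right-hand side with $\mathcal{FB}^{k-1}_\Xi(\d_1^+\hat X(\hat v))$: a smooth representative, perturbed to be transverse to a chosen regular value $v_\ell \neq \star$ in each $S^{k-1}_\ell$, pulls back to a closed codimension $k-1$ submanifold $Z_\ell \subset \mathsf{int}(\d_1^+\hat X(\hat v))$ carrying a framing inherited from a framing of $T_{v_\ell}S^{k-1}_\ell$; collecting the $\{Z_\ell\}$ and labelling each by the color $\ell \in \Xi$ yields an element of $\mathcal{FB}^{k-1}_\Xi$, with homotopies becoming inner framed bordisms. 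Composing $\Phi^{-1}$ with this bijection produces a bijection $\mathcal{FB}^{k-1}_\Xi(\d_1^+\hat X(\hat v)) \to \mathcal{QT}^{\mathsf{emb}}_{d, d}(\hat X, \hat v;\, \mathbf{c}\mathbf \Om_{|\sim|' \geq k};\,  \mathbf{c}\mathbf \Om_{|\sim|' \geq k})$.

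Next, I would verify that this composite agrees with the geometric $Th$. Given $Z = \coprod_\sigma Z_\sigma$, inner-framed and $\Xi$-colored, I would use Lemma \ref{lem.box} to incapsulate $(\hat X, \hat v)$ into a box $([0, 1] \times Y, \tilde v)$ with $Y \cong \d_1^+\hat X(\hat v)$, and use the framing to form a tubular neighborhood $N(Z_\sigma) \cong Z_\sigma \times D^{k-1} \subset Y$. Over each $N(Z_\sigma)$ I would insert a local model of $\d X$ inside the $\tilde v$-trajectories whose tangency pattern along the trajectory over each point of $Z_\sigma$ realizes the $\sigma$-th generating $(d-k)$-cell of $\bar{\mathcal P}_d^{\mathbf \Om_{|\sim|' \geq k}}$ from Proposition \ref{prop.skeleton}, and which is trivial with respect to $\tilde v$ outside the union of the $N(Z_\sigma)$. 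Since $d \equiv 0 \mod 2$, Lemma \ref{lem.bijection D} lets me cap this $\d X$ off to a regular embedding $\a: X \hookrightarrow \hat X$ whose boundary is the constructed locus. By construction all tangency patterns of $\a(\d X)$ to the $\hat v$-flow lie in $\mathbf{c}\mathbf\Om_{|\sim|' \geq k}$, and the classifying map $\Phi(\a)$ from Theorem \ref{th.envelops} represents, after a local computation inside each $N(Z_\sigma)$, the Alexander dual of the $\sigma$-th generator of the bouquet. This matches the homotopy class associated with $Z$ by Thom--Pontryagin.

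The main obstacle will be carrying out the local model insertion in parametrized form, both over $Z_\sigma$ and over an inner-framed bordism $W \subset \d_1^+\hat X(\hat v) \times [0, 1]$, while keeping \emph{all} intermediate tangency patterns inside $\mathbf{c}\mathbf\Om_{|\sim|' \geq k}$. Concretely, one must build a based family, parametrized by $D^{k-1}$, of univariate polynomials representing the $\sigma$-th generator in $\pi_{k-1}(\mathcal P_d^{\mathbf{c}\mathbf\Om_{|\sim|' \geq k}}, \star)$, and then spread it multiplicatively along $Z_\sigma$ and, for the bordism statement, over $W$ as well. Once this coherence is established, the bijectivity of $Th$ follows immediately from the two bijections of the first paragraph, along the lines of Proposition 3.11 in \cite{K9}, of which the present claim is essentially a Lemma~\ref{lem.box}-translate into the convex-envelop setting.
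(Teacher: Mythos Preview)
Your proposal is correct and follows essentially the same approach as the paper: the paper omits an explicit proof (the proposition ends with $\diamondsuit$), but the text immediately preceding it indicates that the result follows from Proposition~3.11 in \cite{K9} combined with Lemma~\ref{lem.box} and Theorem~\ref{th.traversally_generic_fields}, which is precisely the factorization (Thom--Pontryagin to the bouquet, then $\Phi^{-1}$) that you spell out. Your second and third paragraphs add more geometric detail than the paper provides, but the strategy is identical.
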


\begin{example}\label{ex.Sigma10A} \emph{Let us recycle Example 3.5 from \cite{K9}: 
take $k=3$,  $d = 6$, $q = 0$, and $n = 3$. Then $\#\Xi= 10$. So we get a homotopy equivalence $\tau: \cP_6^{\mathbf{c}\Theta_{|\sim |' \geq 3}} \sim \bigvee_{\s=1}^{10} S^2_\s$. Let $(\hat X, \hat v)$ be a convex pair, where $\dim \hat X = 4$, and let $Z \subset \d_1^+\hat X(\hat v)$ be a normally framed $1$-dimensional closed submanifold, each loop in $Z$ being colored with a color from the pallet $\Xi$. Using the isomorphism 
$$Th: \mathcal{FB}^2_\Xi(\d_1^+\hat X(\hat v)) \approx \mathsf{QT}^{\mathsf{emb}}_{6, 6}\big(\hat X, \hat v; \mathbf{c}\Theta_{|\sim |' \geq 3}; \mathbf{c}\Theta_{|\sim |' \geq 3}\big),$$  any \emph{framed link} $Z \subset \d_1^+\hat X(\hat v)$, colored with $10$ distinct colors at most, produces a quasitopy class of a convex envelop   $\a: (X, \a^\dagger(\hat v)) \to  (\hat X, \hat v)$. Its tangency patterns $\om$, except for $\om = (\emptyset)$, have only entries from the list $\{1, 2, 3\}$, so that no more than one $3$ is present in $\om$, and  no more than two $2$'s are present, while $|\om| \leq 6$.} 
\smallskip

\emph{Now, let us assume that $D^4$ carries a constant vector field $v^\parallel$ and that $\d_2^-\hat X(\hat v) \neq \emptyset$. Note that any element of $H_1(\d_1^+\hat X(\hat v)); \Z)$ my be realized by a disjoint union of framed oriented loops. Then, like in Example  3.5 from \cite{K9}, 
the orbit-space of the $\mathsf{G}^{\mathsf{emb}}_{6,6}(D^4; \mathbf{c}\Theta_{|\sim |' \geq 3}; \mathbf{c}\Theta_{|\sim |' \geq 3})$-action on $\mathsf{QT}^{\mathsf{emb}}_{6, 6}\big(\hat X, \hat v; \mathbf{c}\Theta_{|\sim |' \geq 3}; \mathbf{c}\Theta_{|\sim |' \geq 3}\big)$ admits a surjection on the group $(H_1(\d_1^+\hat X(\hat v)); \Z))^{10} \approx (H_1(\hat X; \Z))^{10}$, provided that $\d\hat X$ is orientable.} \smallskip

\emph{Let us give a couple of specific examples of this fact. Consider the box $T^3_\circ \times [0, 1]$, where $T^3_\circ$ is the compliment in the $3$-torus $T^3$ to a ball $D^3$. By rounding the corners of the box, we get a convex pair $(\hat X, \hat v)$, where $\hat X$ is homeomorpfic to $T^3_\circ \times [0, 1]$. Then the orbit-space of the $\mathsf{G}^{\mathsf{emb}}_{6,6}(D^4; \mathbf{c}\Theta_{|\sim |' \geq 3}; \mathbf{c}\Theta_{|\sim |' \geq 3})$-action on  $\mathsf{QT}^{\mathsf{emb}}_{6, 6}\big(\hat X, \hat v;  \mathbf{c}\Theta_{|\sim |' \geq 3}; \mathbf{c}\Theta_{|\sim |' \geq 3}\big)$ is mapped onto the lattice $\Z^{30}$.} 

\emph{Let $M = \mathbb H^3 / \Gamma$ be a compact hyperbolic $3$-manifold and $M_\circ := M \setminus D^3$. By rounding the corners of the box $M_\circ \times [0, 1]$, we get a convex pair $(\hat X, \hat v)$. Then the orbit-space of the $\mathsf{G}^{\mathsf{emb}}_{6,6}(D^4; \mathbf{c}\Theta_{|\sim |' \geq 3}; \mathbf{c}\Theta_{|\sim |' \geq 3})$-action on  $\mathsf{QT}^{\mathsf{emb}}_{6, 6}\big(\hat X, \hat v;  \mathbf{c}\Theta_{|\sim |' \geq 3}; \mathbf{c}\Theta_{|\sim |' \geq 3}\big)$ is mapped onto the abelian group $(\Gamma/[\Gamma, \Gamma])^{10}$, where $[\Gamma, \Gamma]$ denotes the commutator.
\hfill $\diamondsuit$}
\end{example}

By Proposition \ref{prop.inner_bordisms, envelops} and repeating the (obviously modified) arguments in the proof of Corollary 3.13 from \cite{K9}, we get the following corollary. 

\begin{corollary}\label{cor.actions_II}
Let 
$d > k$, and $d \equiv 0 \mod 2$. Let $(\hat X, \hat v)$ be a convex pair.

\begin{itemize}
\item For $\dim \hat X = k \geq 3$ and any choice of $\kappa \in \pi_0(\d_2^-\hat X(\hat v))$,  the group \hfill \break $\mathcal{QT}^{\mathsf{emb}}_{d, d}(D^k, \hat v^\parallel;\, \mathbf c\mathbf \Om_{|\sim|' \geq k};  \mathbf c\mathbf \Om_{|\sim|' \geq k})$ acts freely and transitively on the set \hfill\break   $\mathcal{QT}^{\mathsf{emb}}_{d, d}(\hat X, \hat v;\, \mathbf c\mathbf \Om_{|\sim|' \geq k};  \mathbf c\mathbf \Om_{|\sim|' \geq k})$. Thus both sets are in a $1$-to-$1$ correspondence.  \smallskip
 
\item For $\dim \hat X = k+1 > 5$,  a simply-connected $\hat X$, and any choice of $\kappa \in \pi_0(\d_2^-\hat X(\hat v))$, the group $\mathcal{QT}^{\mathsf{emb}}_{d, d}(D^{k+1}, \hat v^\parallel;\, \mathbf c\mathbf \Om_{|\sim|' \geq k};  \mathbf c\mathbf \Om_{|\sim|' \geq k})$ acts freely and transitively on the set  $\mathcal{QT}^{\mathsf{emb}}_{d, d}(\hat X, \hat v;\, \mathbf c\mathbf \Om_{|\sim|' \geq k};  \mathbf c\mathbf \Om_{|\sim|' \geq k})$. Again, both sets are in a $1$-to-$1$ correspondence.  

\hfill $\diamondsuit$
\end{itemize}
\end{corollary}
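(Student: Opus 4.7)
The plan is to reduce the corollary to an inner framed colored bordism computation via Proposition~\ref{prop.inner_bordisms, envelops}, and then to verify that the connected-sum action supplied by Theorem~\ref{th.envelops on disks} and Corollary~\ref{WORK} coincides, under the Thom identification, with the regular self-action of the bordism group by disjoint union. Concretely, the ambient quasitopy is recast as $\mathcal{FB}_\Xi^{k-1}(\d_1^+\hat X(\hat v))$ and the disk-based quasitopy as $\mathcal{FB}_\Xi^{k-1}(D^{\dim\hat X -1})$, using that $\d_1^+D^{\dim\hat X}(\hat v^\parallel)$ is diffeomorphic to $D^{\dim\hat X -1}$. Connected sum at the chosen component $\kappa \in \pi_0(\d_2^-\hat X(\hat v))$ then translates into adjoining a colored framed cycle supported inside a collar of $\kappa$ to the ambient cycle on $\d_1^+\hat X(\hat v)$.

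For the first bullet ($\dim\hat X = k \geq 3$), the ambient manifold $\d_1^+\hat X(\hat v)$ has dimension $k-1$, so codimension-$(k-1)$ closed submanifolds are finite framed colored $0$-manifolds, and their inner bordism class is the signed color-by-color count. Since $\d_1^+\hat X(\hat v)$ is connected (a deformation retract of the connected $\hat X$), both $\mathcal{FB}_\Xi^{k-1}(\d_1^+\hat X(\hat v))$ and $\mathcal{FB}_\Xi^{k-1}(D^{k-1})$ are canonically $\Z^{A(d,k)}$, and the connected-sum action becomes coordinate-wise addition---manifestly free and transitive.

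For the second bullet ($\dim\hat X = k+1 > 5$, $\hat X$ simply connected), $\d_1^+\hat X(\hat v)$ is a simply connected $k$-manifold with $k \geq 5$, and the relevant submanifolds are framed colored $1$-links. I would argue that every such link is inner bordant to one supported inside an arbitrarily small ball near $\kappa$: each component, being null-homotopic, bounds an embedded $2$-disk by Whitney's embedding argument valid for $k \geq 5$, and may then be isotoped into the ball. Hence the restriction map $\mathcal{FB}_\Xi^{k-1}(D^k) \to \mathcal{FB}_\Xi^{k-1}(\d_1^+\hat X(\hat v))$ is a bijection, and both sets are given by the stable Pontryagin-Thom computation $(\pi_k(S^{k-1}))^{A(d,k)} \cong (\Z/2)^{A(d,k)}$; the connected-sum action then reduces again to the regular self-action of the group.

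The hard part will be the geometric reduction in the second bullet---making precise that every closed framed colored $1$-link in a simply connected $k$-manifold, $k \geq 5$, is inner bordant to a configuration supported inside a small collar ball around $\kappa$---together with the verification that this reduction intertwines the connected-sum operation $\uplus$ of Theorem~\ref{th.envelops on disks} with the abstract group addition on the bordism set. The analogous compatibility of $\uplus$ with the Pontryagin-Thom construction is carried out in Corollary~3.13 of~\cite{K9}; it transfers to the present setting via the box-embedding of Lemma~\ref{lem.box}, while the Whitney-disk argument supplies the $1$-connectivity input. Once these steps are in place, the routine orientation and framing bookkeeping completes the argument.
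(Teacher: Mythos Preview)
Your proposal is correct and follows essentially the same route as the paper's own proof, which simply reads: ``By Proposition~\ref{prop.inner_bordisms, envelops} and repeating the (obviously modified) arguments in the proof of Corollary~3.13 from \cite{K9}, we get the following corollary.'' You have unpacked exactly these two ingredients---the Thom identification with $\mathcal{FB}_\Xi^{k-1}(\d_1^+\hat X(\hat v))$ and the transfer of the connected-sum compatibility from \cite{K9} via Lemma~\ref{lem.box}---and supplied the explicit bordism computations (signed $0$-cycles in the first bullet, framed $1$-links in a simply-connected $k$-manifold with $k\geq 5$ in the second) that the paper leaves implicit.

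One small remark on your second-bullet sketch: the step ``each component bounds an embedded $2$-disk, hence can be pushed into a ball'' shows surjectivity of the inclusion-induced map $\mathcal{FB}_\Xi^{k-1}(D^k)\to\mathcal{FB}_\Xi^{k-1}(\d_1^+\hat X(\hat v))$ but not injectivity on its own. The cleanest way to close this---and what Corollary~3.13 of \cite{K9} effectively does---is homotopy-theoretic: since $\d_1^+\hat X(\hat v)$ is simply connected, Poincar\'e--Lefschetz duality gives $H^{k-1}(\d_1^+\hat X,\d_2^-\hat X;\Z^A)\cong H_1(\d_1^+\hat X;\Z^A)=0$, so the primary obstruction for maps into $\bigvee^A S^{k-1}$ vanishes and the classification reduces to $H^k(\d_1^+\hat X,\d_2^-\hat X;(\Z/2)^A)\cong(\Z/2)^A$, on which the pinch action of $\pi_k(\bigvee^A S^{k-1})\cong(\Z/2)^A$ is translation by the fundamental class. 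This is equivalent to your geometric picture, but makes the free-and-transitive claim immediate without chasing framed surfaces in $Y\times[0,1]$.
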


\begin{example}\emph{ Let $\hat X = \C\P^2 \times D^2$ with $\hat v$ on $\C\P^2 \times D^2$ being generated by a constant vector field on $D^2$. Then $\mathcal{QT}^{\mathsf{emb}}_{d, d}(D^{6}, \hat v^\parallel;\, \mathbf c\mathbf \Om_{|\sim|' \geq 5};  \mathbf c\mathbf \Om_{|\sim|' \geq 5})$ acts freely and transitively on the set  $\mathcal{QT}^{\mathsf{emb}}_{d, d}(\C\P^2 \times D^2, \hat v;\, \mathbf c\mathbf \Om_{|\sim|' \geq 5};  \mathbf c\mathbf \Om_{|\sim|' \geq 5})$ for all even $d \geq 6$. Thus both sets are in $1$-to-$1$ correspondence. As a result, $\mathcal{QT}^{\mathsf{emb}}_{d, d}(\C\P^2 \times D^2, \hat v;\, \mathbf c\mathbf \Om_{|\sim|' \geq 5}; \mathbf c\mathbf \Om_{|\sim|' \geq 5})$ acquires  the  structure of the abelian group $\pi_5\big(\bigvee_{i=1}^{A(d, 5)}S^4_i,\, \star\big)$.}
\hfill $\diamondsuit$
\end{example}

\subsection{Convex envelops with special combinatorics $\Theta$}
For a given $\om \in \mathbf \Om$, we denote by $\langle \om \rangle$ the minimal closed poset, generated by $\om$. Combining Theorem \ref{th.envelops} and Theorem 3.5 from \cite{K9} with Proposition \ref{prop.choice_of_convex_field},  
we get the following result, in which the constraint $d \leq 12$ reflects only the scope of the numerical experiments in \cite{KSW2}.

\begin{theorem}\label{th.envelops,d<14} Assume that $d \leq 12$  and $d \equiv 0 \mod 2$.  Let $\om \in \mathbf\Om_{\leq d}$ be such that $|\om|' > 2$. 
 Let $\hat X$ be a smooth compact connected $(n+1)$-dimensional manifold, equipped with a convex traversing vector field $\hat v$.

Then the quasitopy set $\mathcal{QT}^{\mathsf{emb}}_{d, d}(\hat X, \hat v; \mathbf{c}\langle \om \rangle; \mathbf{c}\langle \om \rangle)$ either consists of single element (is trivial), or is isomorphic to the cohomotopy set $\pi^k(\d_1^+\hat X(\hat v)/\d_2^-\hat X(\hat v))$, where $k = k(\om) \in [|\om|' -1,\, d-1]$. 

In particular, for a constant vector field $\hat v = v^\parallel$, $d \leq 12$, and $\om$ and $k = k(\om)$ from the table in \cite{K9}, Appendix, the group $\mathcal{QT}^{\mathsf{emb}}_{d, d}(D^{n+1}, \hat v; \mathbf{c}\langle \om \rangle, \mathbf{c}\langle \om \rangle) \approx \pi_n(S^k)$. A similar claim is valid for any convex $\hat v$ on $D^{n+1}$, provided that either $n \leq 3$ or $n \geq 5$ and $\d_2^-\hat X(\hat v)$ is simply-connected.
\hfill $\diamondsuit$
\end{theorem}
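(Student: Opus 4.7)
The plan is to combine Theorem~\ref{th.envelops} with the homotopy-type classification of the universal spaces $\cP_d^{\mathbf c\langle\om\rangle}$ and with Proposition~\ref{prop.choice_of_convex_field}. Specializing Theorem~\ref{th.envelops} to $d' = d$ and $\Theta' = \Theta = \langle\om\rangle$, the stabilizing embedding $\e_{d,d}$ is the identity, so the triple-homotopy classes of Definition~\ref{def.triples} collapse to ordinary pointed homotopy classes. This yields, for any convex pair $(\hat X, \hat v)$, a canonical bijection
\[
\Phi^{\mathsf{emb}}:\;\mathcal{QT}^{\mathsf{emb}}_{d,d}(\hat X, \hat v;\, \mathbf c\langle\om\rangle;\, \mathbf c\langle\om\rangle)\;\stackrel{\approx}{\longrightarrow}\;\bigl[(\d_1^+\hat X(\hat v),\, \d_2^-\hat X(\hat v)),\,(\cP_d^{\mathbf c\langle\om\rangle},\, pt)\bigr].
\]

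Next, I would invoke the homotopy-type classification of $\cP_d^{\mathbf c\langle\om\rangle}$ in the finite range $d \leq 12$ with $|\om|' > 2$, which is the content of Theorem~3.5 of \cite{K9} and its Appendix. The hypothesis $|\om|' > 2$ ensures, via Alexander duality on $\bar{\cP}_d \cong S^d$ (Theorem~\ref{thA}), that $\textup{codim}(\bar{\cP}_d^{\langle\om\rangle},\bar{\cP}_d) \geq 3$, so $\cP_d^{\mathbf c\langle\om\rangle}$ is simply connected; its homotopy type is then determined by its integral homology, which is computed combinatorially from the complex $\d^\#$. The cited enumeration shows that, for each admissible $\om$ with $d \leq 12$, the space $\cP_d^{\mathbf c\langle\om\rangle}$ is either contractible or has the homotopy type of a single sphere $S^{k(\om)}$ with $k(\om)\in[|\om|'-1,\,d-1]$. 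In the first case the right-hand side of the bijection is a singleton and the quasitopy set is trivial; in the second case, by the very definition of cohomotopy, $[(Y, Y'),\,(S^{k},pt)] = \pi^k(Y/Y')$, giving the stated isomorphism.

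For the specialization $\hat X = D^{n+1}$ equipped with the constant field $v^\parallel$, the loci $\d_1^+ D^{n+1}(v^\parallel)$ and $\d_2^- D^{n+1}(v^\parallel)$ are, respectively, a closed hemisphere $D^n$ and its equator $S^{n-1}$, so the quotient $\d_1^+/\d_2^-$ is homeomorphic to $S^n$ and $\pi^k(S^n) = [S^n, S^k] = \pi_n(S^k)$. To promote this to an arbitrary convex field $\hat v$ on $D^{n+1}$ in the dimensional ranges $n\leq 3$ or $n\geq 5$ with simply-connected $\d_2^-D^{n+1}(\hat v)$, I would apply Proposition~\ref{prop.choice_of_convex_field}, which asserts precisely that the quasitopy groups do not depend on the choice of convex traversing field under exactly these hypotheses.

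The only genuinely hard step is the homotopy-type identification of $\cP_d^{\mathbf c\langle\om\rangle}$ as a (single) sphere for $d \leq 12$; this is an external computational input inherited from \cite{KSW2} and \cite{K9}, and explains the upper bound on $d$ in the statement. Beyond identifying the homotopy type, every remaining step is a formal manipulation: instantiating the bijection of Theorem~\ref{th.envelops}, transcribing pointed homotopy classes into $S^k$ as cohomotopy, and invoking Proposition~\ref{prop.choice_of_convex_field} for the ball case.
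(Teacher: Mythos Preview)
Your proposal is correct and follows exactly the paper's approach: the paper states the theorem as an immediate consequence of combining Theorem~\ref{th.envelops}, Theorem~3.5 from \cite{K9}, and Proposition~\ref{prop.choice_of_convex_field}, and you have spelled out precisely these three ingredients and how they fit together. Your additional remarks (why $|\om|'>2$ forces simple connectivity, why $\e_{d,d}=\mathrm{id}$ collapses the triple-homotopy classes, and why the bound $d\leq 12$ is a computational rather than conceptual constraint) are accurate elaborations of what the paper leaves implicit.
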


For $d \leq 13$, the table from Appendix in \cite{K9} lists all $\om$'s and the corresponding $k = k(\om)$'s for which $\mathcal P_d^{\mathbf c \langle \om \rangle}$ is homologically nontrivial.  
In fact, $\mathcal P_d^{\mathbf c \langle \om \rangle}$ is a homology $k$-sphere and even a homotopy $k$-sphere, at least when  $|\om|' > 2$ \cite{K9}, a quite mysterious phenomenon... \smallskip

Let us recycle Example 3.4 from \cite{K9}, while adapting it to convex envelops.

\begin{example}\emph{Let $d =8$ and $\om = (4)$. Then, for any convex pair  $(\hat X, \hat v)$, using the list  in \cite{K9}, Appendix, we get a bijection $$\mathcal{QT}^{\mathsf{emb}}_{8,8}(\hat X, \hat v; \mathbf{c}\langle(4) \rangle, \mathbf{c}\langle(4) \rangle) \approx \pi^4\big(\d_1^+\hat X(\hat v)\big/ \d_2^-\hat X(\hat v)\big),$$ where $\pi^4(\sim)$ stands for the $4$-cohomotopy set. In particular, using Proposition \ref{prop.choice_of_convex_field}, for any convex $\hat v$ on $D^8$ such that the locus $\d_2^-D^8(\hat v)$ is simply-connected (the constant $\hat v$ will do), we get a group isomorphism: $$\mathcal{QT}^{\mathsf{emb}}_{8,8}(D^8, \hat v; \mathbf{c}\langle(4) \rangle, \mathbf{c}\langle(4) \rangle) \approx \pi_7(S^4) \approx \Z \times \Z_{12}.$$} 
\emph{Let $d =12$ and $\om = (11213)$. Then, for any for any convex pair  $(\hat X, \hat v)$, by the same list from \cite{K9}, we get a bijection $$\mathcal{QT}^{\mathsf{emb}}_{12,12}(\hat X, \hat v; \mathbf{c}\langle(11213) \rangle, \mathbf{c}\langle(11213) \rangle) \approx \pi^6\big(\d_1^+\hat X(\hat v)\big/ \d_2^-\hat X(\hat v)\big).$$ Again, using Proposition \ref{prop.choice_of_convex_field}, for any convex $\hat v$ on $D^{10}$, such that the locus $\d_2^-D^{10}(\hat v)$ is simply-connected (again, the constant $\hat v$ will do), we get a group isomorphism  $$\mathsf{QT}^{\mathsf{emb}}_{12,12}(D^{10}, \hat v; \mathbf{c}\langle(11213) \rangle, \mathbf{c}\langle(11213) \rangle) \approx \pi_9(S^6) \approx \Z_{24}. \quad \diamondsuit$$} 
\end{example}


Proposition \ref{prop.free_group_A} below deals with special $\Theta$'s for which $\cP_d^{\mathbf c\Theta}$ are $K(\pi, 1)$-spaces and $\pi$ is a free group. 
\smallskip

Let $H, G$ be two groups, and $\mathsf{Hom}(H, G)$ be the group of their homomorphisms. Then $G$ acts on $\mathsf{Hom}(H, G)$ by the conjugation: for any homomorphism $\phi: H \to G$,  $h \in H$, and $g \in G$, we define $(Ad_g \phi)(h)$ by the formula $g^{-1}\phi(h)g$. We denote by $\mathsf{Hom}^\bullet(H, G)$ the quotient $\mathsf{Hom}(H, G)/ Ad_G$.
\smallskip

\begin{proposition}\label{prop.free_group_A} Let $\mathbf c\Theta$ consist of all $\om$'s with entries $1$ and $2$ only and no more than a single entry $2$. Put $\kappa(d) =_{\mathsf{def}} \frac{d(d-2)}{4}$ for $d \equiv 0 \mod 2$. We denote by $\mathsf F_{\kappa(d)}$ the \emph{free} group in $\kappa(d)$ generators. \smallskip

Consider a convex pair $(\hat X, \hat v)$, where $\hat X$ is connected.
If $\d_2^-\hat X(\hat v) \neq \emptyset$, then there is a bijection 
$$\Phi^{\mathsf{emb}}: \mathcal{QT}^{\mathsf{emb}}_{d,d}(\hat X, \hat v; \mathbf{c}\Theta; \mathbf{c}\Theta) \stackrel{\approx}{\longrightarrow} \mathsf{Hom}(\pi_1(\hat X), \mathsf F_{\kappa(d)})
$$
and a surjection 
$$\Phi^{\mathsf{sub}}: \mathcal{QT}^{\mathsf{sub}}_{d,d}(\hat X, \hat v; \mathbf{c}\Theta; \mathbf{c}\Theta) \longrightarrow \mathsf{Hom}(\pi_1(\hat X), \mathsf F_{\kappa(d)}).$$

When $\d_2^-\hat X(\hat v) = \emptyset$, then similar claims hold with the target of $\Phi^{\mathsf{emb}}$ and $\Phi^{\mathsf{sub}}$ being replaced by the set $\mathsf{Hom}^\bullet(\pi_1(\hat X), \mathsf F_{\kappa(d)})$.

In particular, $\Phi^{\mathsf{emb}}: \mathcal{QT}^{\mathsf{emb}}_{d,d}(S^1 \times [0, 1], \hat v; \mathbf{c}\Theta, \mathbf{c}\Theta) \stackrel{\approx}{\longrightarrow} \mathsf F_{\kappa(d)}/ Ad_{\mathsf F_{\kappa(d)}}$, the free group of \emph{cyclic} words in $\kappa(d)$ letters
(see Fig.3 from \cite{K9}). Here $\hat v$ is tangent to the fibers of the obvious projection $S^1 \times [0, 1] \to S^1.$
\smallskip

If $\pi_1(\hat X)$ has no free images, then the group $\mathcal{QT}^{\mathsf{emb}}_{d,d}(\hat X, \hat v; \mathbf{c}\Theta, \mathbf{c}\Theta)$ is trivial. 
\end{proposition}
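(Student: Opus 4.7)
The plan is to apply Theorem \ref{th.envelops} and then identify the classifying ``Grassmanian'' $\cP_d^{\mathbf c\Theta}$ as an Eilenberg-MacLane space. Specializing Theorem \ref{th.envelops} to $d' = d$ and $\Theta' = \Theta$ makes the stabilization map $\e_{d,d}$ the identity and the double-bracket in its target collapses to ordinary pair-homotopy classes, yielding a bijection
$$\mathcal{QT}^{\mathsf{emb}}_{d,d}(\hat X, \hat v; \mathbf c\Theta; \mathbf c\Theta) \; \stackrel{\approx}{\longrightarrow} \; \big[(\d_1^+\hat X(\hat v),\, \d_2^-\hat X(\hat v)),\, (\cP_d^{\mathbf c\Theta}, pt)\big]$$
and a split surjection from the $\mathsf{sub}$-version onto the same set.

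The main step is to show that $\cP_d^{\mathbf c\Theta}$ is a $K(\mathsf F_{\kappa(d)}, 1)$. Inspection of the description of $\mathbf c\Theta$ (entries $1,2$ only, at most one $2$) shows $\mathbf c\Theta = \bfOm_{|\sim|' \leq 1}$, equivalently $\Theta = \bfOm_{|\sim|' \geq 2}$. Proposition \ref{prop.skeleton} applied with $k=2$ and $q=0$ then gives that $\bar{\cP}_d^\Theta$ is homotopy equivalent to a bouquet of $(d-2)$-spheres whose count $A(d, 2, 0)$ is the Euler characteristic of $(\Z[\bfOm_{|\sim|' \geq 2}], \d)$; an elementary bookkeeping in this complex yields $A(d, 2, 0) = d(d-2)/4 = \kappa(d)$. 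By Alexander duality (Theorem \ref{thA}), $\tilde H^\ast(\cP_d^{\mathbf c\Theta}; \Z)$ is free of rank $\kappa(d)$ concentrated in degree $1$; combined with Proposition \ref{prop.fundamental_groups} and the combinatorial analysis from \cite{KSW1, KSW2}, $\cP_d^{\mathbf c\Theta}$ has the homotopy type of a bouquet of $\kappa(d)$ circles, hence of $K(\mathsf F_{\kappa(d)}, 1)$.

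Third, we assemble. The convex $(-\hat v)$-flow deformation-retracts $\hat X$ onto $\d_1^+\hat X(\hat v)$, so $\pi_1(\d_1^+\hat X(\hat v)) \cong \pi_1(\hat X)$. Because the target is aspherical, pair-homotopy classes $[(\d_1^+\hat X, \d_2^-\hat X), (K(\mathsf F_{\kappa(d)}, 1), pt)]$ are detected by the induced map on $\pi_1$. When $\d_2^-\hat X(\hat v) \neq \emptyset$, pick a basepoint there; based pair-maps correspond bijectively to homomorphisms $\pi_1(\hat X) \to \mathsf F_{\kappa(d)}$, producing $\Phi^{\mathsf{emb}}$ with target $\mathsf{Hom}(\pi_1(\hat X), \mathsf F_{\kappa(d)})$. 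When $\d_2^-\hat X(\hat v) = \emptyset$, there is no basepoint constraint, and free homotopy classes into a $K(\pi, 1)$ are classically $\mathsf{Hom}(\pi_1(\hat X), \pi)/Ad_\pi = \mathsf{Hom}^\bullet(\pi_1(\hat X), \mathsf F_{\kappa(d)})$. For $\hat X = S^1 \times [0, 1]$ with vertical $\hat v$, both boundary circles are transverse to $\hat v$ so $\d_2\hat X = \emptyset$; $\pi_1(S^1) = \Z$ then yields $\mathsf F_{\kappa(d)}/Ad$, the cyclic words.

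For the final claim, any $\phi \in \mathsf{Hom}(\pi_1(\hat X), \mathsf F_{\kappa(d)})$ has image that is free by Nielsen-Schreier and is a quotient of $\pi_1(\hat X)$; if the latter admits no nontrivial free quotient, $\phi$ must be trivial, forcing $\mathcal{QT}^{\mathsf{emb}}_{d,d}(\hat X, \hat v; \mathbf c\Theta, \mathbf c\Theta)$ to reduce to a single element. The principal obstacle is the second step: both the asphericity of $\cP_d^{\mathbf c\Theta}$ and the precise free rank $\kappa(d) = d(d-2)/4$ rely on the combinatorial package of \cite{KSW1, KSW2}. A secondary delicacy---when $\d_2^-\hat X \neq \emptyset$ but $\pi_1(\d_2^-\hat X) \to \pi_1(\hat X)$ is nontrivial---is ensuring that the basepoint constraint does not shrink the set of realizable homomorphisms, which is possible because the target, a bouquet of circles, allows loops to be freely deformed to the basepoint once their $\pi_1$-class is fixed.
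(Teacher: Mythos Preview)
Your proof is correct and follows essentially the same route as the paper's: identify $\cP_d^{\mathbf c\Theta}$ as a $K(\mathsf F_{\kappa(d)},1)$, use the deformation retraction of $\hat X$ onto $\d_1^+\hat X(\hat v)$ to match fundamental groups, and then read off the homotopy classes via Theorem~\ref{th.envelops}. The paper's own proof is terser---it invokes Lemma~\ref{lem.box} to encapsulate $(\hat X,\hat v)$ in a box $\R\times Y$ and then cites Corollary~3.12 of \cite{K9} directly, whereas you unpack that corollary by appealing to Proposition~\ref{prop.skeleton} (with $k=2$) and Alexander duality; the substance is the same.
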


\begin{proof} We observe that $\pi_1(\d_1^+\hat X(\hat v) \approx \pi_1(\hat X)$ since the two spaces are homotopy equivalent due to the convexity of  $\hat v$. By Lemma \ref{lem.box}, we can incapsulate $(\hat X, \hat v)$ in a box $\R \times Y$, where $Y$ is homeomorphic to $\d_1^+\hat X(\hat v)$. Using this $Y$, the claim follows from Corollary 3.12 in \cite{K9}.
\end{proof}


The next proposition is a stabilization result, by the increasing $d' \geq d$, for the convex envelops with (using the terminology of \cite{Ar}),  $k$-{\sf moderate tangent patterns} $\om \in \mathbf{c}\Theta_{\max \geq k}$. The entries of such $\om$'s are all less than $k$. Proposition \ref{prop.moderate_QI} below follows instantly from  \cite{K9}, Proposition 3.8,  by combining it with Lemma \ref{lem.box}.

\begin{proposition}\label{prop.moderate_QI} Let $k \geq 4$. If $\dim Y \leq (k-2)(\lceil d/k \rceil +1)- 2$,
then the classifying map 
\begin{eqnarray}\label{emb_homotopy_AA}
\qquad\qquad \Phi^{\mathsf{emb}}: \mathcal{QT}^{\mathsf{emb}}_{d, d'}(\hat X, \hat v; \mathbf{c}\Theta_{\max \geq k}; \mathbf{c}\Theta_{\max \geq k}) \stackrel{\approx}{\longrightarrow} \nonumber 
 \big[(\d_1^+\hat X(\hat v),  \d_2^-\hat X(\hat v)), (\cP_d^{\mathbf c\Theta_{\max \geq k}}, pt) \big]
\end{eqnarray}
is a bijection, and the classifying map
\begin{eqnarray}\label{emb_homotopy_AAA}
\qquad\qquad \Phi^{\mathsf{sub}}: \mathcal{QT}^{\mathsf{imm}}_{d, d'}(\hat X, \hat v; \mathbf{c}\Theta_{\max \geq k}; \mathbf{c}\Theta_{\max \geq k}) \stackrel{\approx}{\longrightarrow} \nonumber 
 \big[(\d_1^+\hat X(\hat v),  \d_2^-\hat X(\hat v)), (\cP_d^{\mathbf c\Theta_{\max \geq k}}, pt) \big]
\end{eqnarray}
is a surjection for any $d' \geq d$, $d' \equiv d \mod 2$. \smallskip

In particular, for a given $(\hat X, \hat v)$, the quasitopy $\mathsf{QT}^{\mathsf{emb}}_{d, d'}(\hat X, \hat v; \mathbf{c}\Theta_{\max \geq k}; \mathbf{c}\Theta_{\max \geq k})$ stabilizes for all  $d' \geq d \geq \frac{k}{k-2}(\dim \hat X +3 -k)$, a linear function in $\dim \hat X$. \hfill $\diamondsuit$
\end{proposition}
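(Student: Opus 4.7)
The plan is to reduce the statement to its ``product box'' analogue, Proposition 3.8 of \cite{K9}, through the encapsulation machinery of Lemma \ref{lem.box} combined with the bijection/split surjection of Theorem \ref{th.envelops}. The convex setting is already packaged so that each convex envelop becomes a submersion/embedding into $\R \times Y$ with matching tangency combinatorics, and conversely, after rounding of corners.

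First I would apply Lemma \ref{lem.box} to produce a compact smooth $n$-manifold $Y$, diffeomorphic to $\d_1^+\hat X(\hat v)$ with $\d Y \cong \d_2^-\hat X(\hat v)$, and an embedding $\hat X \subset [0,1] \times Y$ along whose fibers $\hat v$ is tangent. Any convex envelop $\a: X \to \hat X$ then yields a regular embedding into $\R \times Y$ whose tangency pattern to the vertical $1$-foliation $\mathcal L$ at each $y \in Y$ equals the pattern of $\a(\d X)$ along the $\hat v$-trajectory over the corresponding point of $\d_1^+\hat X(\hat v)$; the same holds for quasitopies $A$ once the bigger cylinder $\hat X \times [0,1]$ is encapsulated in $([0,1] \times Y) \times [0,1]$. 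Composing with the homeomorphism of pairs $p: (\d_1^+\hat X(\hat v), \d_2^-\hat X(\hat v)) \to (Y, \d Y)$ generated by the $(-\hat v)$-flow, the classifying maps $\Phi^{\mathsf{emb/sub}}$ of Theorem \ref{th.envelops} factor precisely through their product-box analogues from Proposition 3.8 of \cite{K9}.

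Next I would invoke Proposition 3.8 of \cite{K9} directly. Under the hypothesis $\dim Y \leq (k-2)(\lceil d/k \rceil + 1) - 2$, that proposition asserts that the stabilization map $\e_{d,d'}: \cP_d^{\mathbf c\Theta_{\max \geq k}} \hookrightarrow \cP_{d'}^{\mathbf c\Theta_{\max \geq k}}$ is sufficiently connected that, on the $(\dim Y)$-dimensional pair $(Y, \d Y)$, obstruction theory collapses the $\e$-equivariant homotopy set of Definition \ref{def.triples} to the ordinary homotopy set $[(Y, \d Y), (\cP_d^{\mathbf c\Theta_{\max \geq k}}, pt)]$; bijectivity is then delivered for embeddings and split surjectivity for submersions, for all $d' \geq d$ of matching parity. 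The final ``linear in $\dim \hat X$'' stabilization threshold is obtained by substituting $\dim Y = \dim \hat X - 1$ into the hypothesis and solving for $d$, giving $d \geq \tfrac{k}{k-2}(\dim \hat X + 3 - k)$.

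The main obstacle is primarily bookkeeping rather than conceptual: one must verify that a cobordism $A$ between two convex pseudo-envelops, after being pushed into the box $([0,1] \times Y) \times [0,1]$, retains its tangency constraints with respect to the correct product $1$-foliation $\mathcal L^\bullet$ and does not acquire spurious tangencies from corner-rounding near the ``joins'' along $\d_2^-\hat X(\hat v) \times [0,1]$. This check relies on the local normal forms for convex fields near $\d_2^-\hat X(\hat v)$ exploited in the proof of Lemma \ref{lem.box} (the Morin coordinates $u^2 + w$) and on the observation already used in the proof of Theorem \ref{th.envelops} that rounding corners of a box does not alter the combinatorial tangency type along any $\hat v$-trajectory.
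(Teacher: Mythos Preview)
Your proposal is correct and follows essentially the same route as the paper: the paper's proof simply states that the proposition ``follows instantly from \cite{K9}, Proposition 3.8, by combining it with Lemma \ref{lem.box},'' which is precisely your encapsulation-into-a-box argument followed by invoking the product-box result. Your additional bookkeeping remarks about corner-rounding and the factoring through Theorem \ref{th.envelops} are more explicit than the paper's one-line justification, but the underlying strategy is identical.
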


\begin{example} \emph{Let $k=4$. By Proposition \ref{prop.moderate_QI}, for any compact connected $3$-\emph{dimensional} convex pair $(\hat X, \hat v)$, we get bijections:}
\begin{eqnarray}
\mathcal{QT}^{\mathsf{emb}}_{4, 4}(\hat X, \hat v; \mathbf{c}\Theta_{\max \geq 4}; \mathbf{c}\Theta_{\max \geq 4}) \stackrel{\approx}{\longrightarrow} \pi^2(\d_1^+\hat X(\hat v)\big/  \d_2^-\hat X(\hat v)),
\nonumber \\
\mathcal{QT}^{\mathsf{emb}}_{4, 6}(\hat X, \hat v; \mathbf{c}\Theta_{\max \geq 4}; \mathbf{c}\Theta_{\max \geq 4}) \stackrel{\approx}{\longrightarrow} \pi^2(\d_1^+\hat X(\hat v)\big/\d_2^-\hat X(\hat v)),\nonumber \\
\mathcal{QT}^{\mathsf{emb}}_{6, 6}(\hat X, \hat v; \mathbf{c}\Theta_{\max \geq 4}; \mathbf{c}\Theta_{\max \geq 4}) \stackrel{\approx}{\longrightarrow} \pi^2(\d_1^+\hat X(\hat v)\big/ \d_2^-\hat X(\hat v)), \nonumber
\end{eqnarray}
\emph{whose target is the second cohomotopy group $\pi^2(\d_1^+\hat X(\hat v)\big/ \d_2^-\hat X(\hat v))$ of the singular connected surface $\d_1^+\hat X(\hat v)\big/ \d_2^-\hat X(\hat v)$. This cohomotopy group is isomorphic to $\Z$ via the degree invariant. Thus, all the three types of quasitopy classes  are determined by this degree. 
Assuming $\d_2^-\hat X(\hat v) = \emptyset$, we may replace  $\pi^2(\d_1^+\hat X(\hat v)\big/ \d_2^-\hat X(\hat v))$ by $\pi^2(\hat X)$.}
\hfill $\diamondsuit$
\end{example}

\subsection{Characteristic classes of convex pseudo-evelops}
In this subsection, we will use the cohomology $H^\ast(\cP_d^{\mathbf c \Theta})$ of the classifying space $\cP_d^{\mathbf c \Theta}$ (see \cite{KSW2} and Section 2) to produce a variety of characteristic classes of convex pseudo-envelops.
\smallskip

Since any convex pseudo-envelop $\a: (X, \a^\dagger(\hat v)) \to (\hat X, \hat v)$ produces  an immersion $\a^\d: \d X \to \hat X$, the following theorem follows directly from  Theorem 3.3, \cite{K9}.

\begin{theorem}\label{th.A_car_classes} Let $(\hat X, \hat v)$ be a convex pair. Pick  $d' = d$, $d \equiv 0 \mod 2$, and $\Theta' = \Theta$.  

Then any convex pseudo-envelop $\a: (X, \a^\dagger(\hat v)) \to (\hat X, \hat v)$  induces a characteristic homomorphism $(\Phi^\a)^\ast$ from the $\ast$-homology of the differential complex $$\big\{(\d^\#)^\ast : \Z[\Theta_{\langle d]}^\#]^\ast \to \Z[\Theta_{\langle d]}^\#]^\ast\big\},$$ dual to the differential complex in (\ref{eq.quotient_complex}), to the cohomology $H^\ast(\hat X; \Z)$, and, via $\a^\ast$, further to the cohomology $H^\ast(X; \Z)$. 

The $(d, d; \mathbf c\Theta, \mathbf c\Theta)$-quasitopic pseudo-envelops/envelops induce the same characteristic homomorphisms. \hfill $\diamondsuit$
\end{theorem}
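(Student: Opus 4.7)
The approach is to define the characteristic homomorphism as a composition of three standard maps: the combinatorial description of $H^\ast(\cP_d^{\mathbf c\Theta};\Z)$ supplied by Theorem~\ref{thA}, the pull-back induced by the classifying map produced in Theorem~\ref{th.envelops}, and transport through the deformation retraction provided by the convex flow. In essence, this is a transcription of the proof of Theorem~3.3 from \cite{K9} to the present setting, with Lemma~\ref{lem.box} and the ``virtual section'' $\d_1^+\hat X(\hat v)$ replacing the base manifold $Y$ of \cite{K9}.

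First, I would invoke Theorem~\ref{th.envelops} with $d' = d$ and $\Theta' = \Theta$ to obtain, from the convex pseudo-envelop $\a$, a canonical homotopy class
$$\Phi^\a:\; (\d_1^+\hat X(\hat v),\, \d_2^-\hat X(\hat v)) \longrightarrow (\cP_d^{\mathbf c\Theta},\, pt).$$
By Theorem~\ref{thA}, $H^j(\cP_d^{\mathbf c\Theta};\Z)$ is canonically isomorphic, after the degree shift $j \mapsto d-j$, to the $\ast$-homology of the dual differential complex $((\d^\#)^\ast: \Z[\Theta_{\langle d]}^\#]^\ast \to \Z[\Theta_{\langle d]}^\#]^\ast)$. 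Composing this identification with the pull-back $(\Phi^\a)^\ast$ yields a homomorphism from the $\ast$-homology of that dual complex into the relative singular cohomology $H^\ast(\d_1^+\hat X(\hat v),\, \d_2^-\hat X(\hat v);\Z)$. Because $\hat v$ is convex, the $(-\hat v)$-flow exhibits $\d_1^+\hat X(\hat v)$ as a deformation retract of $\hat X$, so $H^\ast(\d_1^+\hat X(\hat v);\Z) \cong H^\ast(\hat X;\Z)$. Post-composing with the natural map $H^\ast(\d_1^+\hat X(\hat v),\, \d_2^-\hat X(\hat v);\Z) \to H^\ast(\d_1^+\hat X(\hat v);\Z)$ of the pair and with this retraction isomorphism delivers the characteristic homomorphism into $H^\ast(\hat X;\Z)$; a further pull-back by $\a^\ast$ lands it in $H^\ast(X;\Z)$.

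The quasitopy invariance is essentially built into the construction: by Theorem~\ref{th.envelops}, two $(d,d;\mathbf c\Theta,\mathbf c\Theta)$-quasitopic pseudo-envelops $\a_0, \a_1$ produce homotopic classifying maps $\Phi^{\a_0} \simeq \Phi^{\a_1}$, and homotopic maps induce identical pull-backs on singular cohomology; hence $(\Phi^{\a_0})^\ast = (\Phi^{\a_1})^\ast$ at every stage of the composition. The only mildly delicate bookkeeping---and the main potential obstacle in writing out the proof in full---is keeping track of the index-shift convention $j \leftrightarrow d-j$ and the Alexander duality $\mathcal D$ that intervene in Theorem~\ref{thA}, so that the resulting homomorphism is well-defined in a natural grading and the naturality with respect to the maps in (\ref{eq.quotient_complex}) is preserved.
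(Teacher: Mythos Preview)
Your proposal is correct and follows essentially the same route as the paper. The paper's own proof is a single sentence invoking Theorem~3.3 of \cite{K9} via the correspondence $\a \leadsto \a^\d$; your write-up simply unpacks that citation, using Theorem~\ref{th.envelops} to produce the classifying map, Theorem~\ref{thA} to identify $H^\ast(\cP_d^{\mathbf c\Theta};\Z)$ with the combinatorial complex, and the convex-flow retraction to pass from $\d_1^+\hat X(\hat v)$ to $\hat X$.
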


Let us revisit the Arnold-Vassiliev case \cite{Ar}, \cite{V} of real polynomials with {\sf moderate singularities}. Let $\Theta_{\max\, \geq k} \subset \mathbf\Om_{\langle d]}$ be the closed poset, consisting of $\om$'s with the maximal entry $\geq k$. For $k \geq 3$, the cohomology $H^j(\cP^{\mathbf c\Theta_{\max \geq k}}, pt; \Z)$ is isomorphic to $\Z$  in each dimension $j$ of the form $(k- 2)m$, where the integer $m \leq d/k$, and vanishes otherwise \cite{Ar}.
The cohomology ring $H^\ast(\cP^{\mathbf c\Theta_{\max \geq k}}, pt; \Z)$ was computed by Vassiliev in \cite{V}, Theorem 1 on page 87. Here is the summery of his result: consider the graded ring $\mathcal Vass_{d, k}$, multiplicatively generated over $\Z$ by the elements $\{e_m\}_{m \leq d/k}$ of the degrees $\deg(e_m) = m(k-2)$, subject to the relations
\begin{eqnarray}\label{eq.relations_ev} 
e_l \cdot e_m = \frac{(l+m)!}{l! \cdot m!}\; e_{l+m} \text{\; for } k\equiv 0 \mod 2, \text{ and the relations } 
\end{eqnarray}
\begin{eqnarray}
e_1\cdot e_1 = 0, \quad e_1\cdot e_{2m} = e_{2m+1}, \nonumber \
\end{eqnarray}
\begin{eqnarray}\label{eq.relations_odd}
e_{2l} \cdot e_{2m} = \frac{(l+m)!}{l! \cdot m!}\; e_{2l+2m} \text{\; for } k\equiv 1 \mod 2.
\end{eqnarray}

Combining Lemma \ref{lem.box} with Proposition 3.7 from \cite{K9}, we get the following assertion. 

\begin{proposition}\label{prop.Vass} Let $k \geq 3$. Consider a convex pseudo-envelop   $\a: (X, v) \to (\hat X, \hat v)$ whose tangency patterns to the  $\hat v$-flow belong to $\mathbf c\Theta_{\max \,\geq k} \subset \mathbf\Om_{\langle d]}$  (the tangencies are $k$-moderate). 

Then $\a$  generates a characteristic \emph{ring} homomorphism $$(\Phi^\a)^\ast: \mathcal Vass_{d, k} \to H^\ast(\d_1^+\hat X(\hat v), \d_2^-\hat X(\hat v); \Z),$$ 
which is an invariant of the quasitopy class of $\a$. 
In other words, we get a map $$\Phi^\ast_{d, k}: \mathcal{QT}^{\mathsf{imm}}_{d, d}(\hat X, \hat v; \mathbf{c}\Theta_{\max \geq k};  \mathbf{c}\Theta_{\max \geq k}) \to \mathsf{Hom_{ring}}\big(\mathcal Vass_{d, k},\; H^\ast(\d_1^+\hat X(\hat v), \d_2^-\hat X(\hat v); \Z)\big).$$

In particular, for any generator $e_l \in \mathcal Vass_{d, k}$,  
we get a characteristic element $(\Phi^\a)^\ast(e_l) \in H^{l(k-2)}(\d_1^+\hat X(\hat v), \d_2^-\hat X(\hat v); \Z)$ which is an invariant of the quasitopy class of $\a$. \smallskip

If $\d_2^-\hat X(\hat v) = \emptyset$, then $(\Phi^\a)^\ast(e_l)$ lives in 
$H^{l(k-2)}(\hat X; \Z) \approx H^{l(k-2)}(\d_1^+\hat X(\hat v); \Z)$. \smallskip

If $\hat X$ is oriented and $(n+1)$-dimensional, using the Poincar\'{e} duality in $\d_1^+\hat X(\hat v)$, we produce a homology class $\mathcal D((\Phi^\a)^\ast(e_l)) \in H_{n-l(k-2)}(\d_1^+\hat X(\hat v); \Z) \approx H_{n-l(k-2)}(\hat X; \Z)$ which is again an invariant of the quasitopy class of $\a$.
\hfill $\diamondsuit$
\end{proposition}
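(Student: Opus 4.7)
The plan is to extract the desired characteristic ring homomorphism directly from the classifying map provided by Theorem~\ref{th.envelops}. Applied with $\Theta' = \Theta = \Theta_{\max \geq k}$ and $d' = d$, that theorem assigns to any convex pseudo-envelop $\a: (X, v) \to (\hat X, \hat v)$ a well-defined homotopy class
\[
\Phi^\a: (\d_1^+\hat X(\hat v),\, \d_2^-\hat X(\hat v)) \longrightarrow (\cP_d^{\mathbf c\Theta_{\max \geq k}},\, pt),
\]
depending only on the quasitopy class of $\a$. Pulling back singular cohomology along $\Phi^\a$ then produces a graded ring homomorphism
\[
(\Phi^\a)^\ast: H^\ast(\cP_d^{\mathbf c\Theta_{\max \geq k}},\, pt;\, \Z) \longrightarrow H^\ast(\d_1^+\hat X(\hat v),\, \d_2^-\hat X(\hat v);\, \Z),
\]
which, being defined by a quasitopy-invariant homotopy class, is itself a quasitopy invariant of $\a$.

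To identify the source with $\mathcal Vass_{d,k}$, I would invoke Vassiliev's Theorem~1 in \cite{V}, recorded in relations (\ref{eq.relations_ev})--(\ref{eq.relations_odd}) above; this is the substantive ingredient and I would take it as a black box, yielding a canonical \emph{ring} isomorphism $\mathcal Vass_{d, k} \cong H^\ast(\cP_d^{\mathbf c\Theta_{\max \geq k}},\, pt;\, \Z)$ sending each $e_m$ to the distinguished generator in degree $m(k-2)$. Composing this identification with $(\Phi^\a)^\ast$ delivers the characteristic ring homomorphism of the proposition, and the map $\Phi^\ast_{d,k}$ on quasitopy classes is then well-defined. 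Equivalently, and as signalled by the preamble, one may first encapsulate $(\hat X, \hat v)$ into a product box $\R \times Y$ via Lemma~\ref{lem.box} and then invoke Proposition~3.7 of \cite{K9} essentially verbatim, using that the classifying map for $\a^\d: \d X \to \R \times Y$ is compatible with the one for $\a$ under the projection $p(\tilde v): (\d_1^+\hat X(\hat v), \d_2^-\hat X(\hat v)) \to (Y, \d Y)$ that features in the proof of Theorem~\ref{th.envelops}.

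The remaining claims are bookkeeping. When $\d_2^-\hat X(\hat v) = \emptyset$, the relative cohomology collapses to absolute cohomology, and because the convex $(-\hat v)$-flow deformation-retracts $\hat X$ onto $\d_1^+\hat X(\hat v)$, we have $H^{l(k-2)}(\d_1^+\hat X(\hat v);\Z) \cong H^{l(k-2)}(\hat X;\Z)$, under which $(\Phi^\a)^\ast(e_l)$ transfers to a class in $H^{l(k-2)}(\hat X;\Z)$. When $\hat X$ is oriented and $(n{+}1)$-dimensional, $\d_1^+\hat X(\hat v)$ inherits the structure of an oriented smooth $n$-manifold with boundary $\d_2^-\hat X(\hat v)$, and Lefschetz duality converts $(\Phi^\a)^\ast(e_l)$ into a class $\mathcal D((\Phi^\a)^\ast(e_l)) \in H_{n-l(k-2)}(\d_1^+\hat X(\hat v);\Z)$, which is then transported to $H_{n-l(k-2)}(\hat X;\Z)$ via the retraction. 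The only genuine difficulty here has been outsourced to Vassiliev's multiplicative computation; everything else is a packaging of Theorem~\ref{th.envelops} with the functoriality of cup products.
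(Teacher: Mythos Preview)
Your proposal is correct and follows essentially the same approach as the paper, which simply states that the proposition follows by combining Lemma~\ref{lem.box} with Proposition~3.7 from \cite{K9}. You have unpacked this a bit further by noting that one may equivalently go through Theorem~\ref{th.envelops} directly (whose proof already absorbs Lemma~\ref{lem.box}), and you have spelled out the bookkeeping for the final two claims, but the underlying strategy is identical.
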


\begin{remark}
\emph{
We notice that if $(\Phi^\a)^\ast(e_l) = 0$, then, by (\ref{eq.relations_ev}) and (\ref{eq.relations_odd}), all $\{(\Phi^\a)^\ast(e_q)\}_{q \geq l}$ must be torsion elements in $H^{q(k-2)}(\d_1^+\hat X(\hat v), \d_2^-\hat X(\hat v); \Z)$. For an orientable $\hat X$, they may be viewed as elements of $H_{n-q(k-2)}(\hat X; \Z)$.} \hfill $\diamondsuit$
\end{remark}



%



\subsection{How to manufacture convex envelops with desired combinatorial tangency patterns}

Let us recall one classical construction, leading to the Alexander duality. Let $K \subset S^d$ be a $CW$-subcomplex of the $d$-sphere. For an element $a \in H_p(K; \Z)$, we denote by $c_a \in H_{p+1}(S^d, K; \Z)$ the unique element such that $\d_\ast(c_a) = a$. The Alexander duality operator $\mathcal A\ell$ is defined by the formula $\mathcal A\ell(a) =_{\mathsf{def}}  \mathcal D(c_a) \in H^{d-p-1}(S^d \setminus K; \Z)$, where $\mathcal D$ is the Poincar\'{e} duality operator, the inverse of the operator $[S^d]\cap$. Pick $b \in H_{d-p-1}(S^d \setminus K; \Z)$. Then {\sf linking number} of $a, b$ is defined by the formula $\mathsf{lk}(a, b) := \langle \mathcal A\ell(a),\, b \rangle$, where $\langle \sim, \sim \rangle$ is the natural pairing between cohomology and homology of dimension $n-p-1$.   \smallskip

The next lemma is instrumental in producing  examples of convex envelopes with prescribed combinatorial patterns of their trajectories.
\begin{lemma}\label{lem.char_classes} For any element $\om \in \mathbf\Om_{\langle d]}$, with the help of the differential $\d(\om)$, given by formula $(\ref{eq.d+d})$, the Alexander duality $\mathcal A\ell$ produces a cohomology class
$$\theta_\om =_{\mathsf{def}}\mathcal A\ell([\d\bar{\mathsf R}^\om]) \in\; H^{|\om|'}(\mathcal P_d^{\mathbf{c}\{\om_\succ\}}; \Z),$$
where $\om_\succ$ denotes the set of elements of $\mathbf\Om_{\langle d]}$ that are smaller than $\om$ (so, $\om \in \mathbf{c}\{\om_\succ\}$), and $\d\bar{\mathsf R}^\om$ denotes the algebraic boundary of the cell $\bar{\mathsf R}^\om$.
\end{lemma}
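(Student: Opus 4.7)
The strategy is to interpret the algebraic boundary $\d\bar{\mathsf R}^\om$ geometrically as a cycle in the subcomplex $\bar\cP_d^{\{\om_\succ\}}$ of $\bar\cP_d \cong S^d$, and then to apply Alexander duality. First I would invoke the semialgebraic $CW$-structure on $\bar\cP_d$ whose open cells are the strata $\mathring{\mathsf R}^{\om'}$, indexed by $\om' \in \bfOm_{\langle d]}$, together with a single $0$-cell at infinity. In this stratification, the closed cell $\bar{\mathsf R}^{\om'}$ has dimension $d-|\om'|'$, and its closure contains exactly those strata $\mathring{\mathsf R}^{\om''}$ with $\om' \succeq \om''$. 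By the computation carried out in \cite{KSW2} (which also underlies Theorem \ref{thA}), the cellular boundary of $\bar{\mathsf R}^\om$, relative to a fixed coherent orientation of each stratum, agrees with the algebraic expression $\d(\om) = \d_\sM(\om) + \d_\sI(\om)$ from (\ref{eq.d+d}); the signs $(-1)^k$ attached to $\sM_k$ and $\sI_k$ precisely encode the orientation change across a merge or an insertion.

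Next I would verify that $\d\bar{\mathsf R}^\om$ is a cycle supported on the subcomplex $\bar\cP_d^{\{\om_\succ\}}$. Supportedness is immediate: every stratum appearing in $\d(\om)$ is obtained from $\om$ by a single $\sM_k$ or $\sI_k$, hence is strictly dominated by $\om$ in the poset $\bfOm$, so belongs to $\om_\succ$. To see that it is a cycle, I would use the relation $\d \circ \d = 0$ recorded in the excerpt just after (\ref{eq.d+d}), which in turn follows from $\d_\sM^2 = \d_\sI^2 = 0$ combined with the anticommutation of $\d_\sM$ and $\d_\sI$. Thus $\d\bar{\mathsf R}^\om$ represents a well-defined class
$$[\d\bar{\mathsf R}^\om]\; \in\; H_{d-|\om|'-1}\bigl(\bar\cP_d^{\{\om_\succ\}};\,\Z\bigr).$$

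Finally, since $\bar\cP_d^{\{\om_\succ\}}$ is a finite $CW$-subcomplex of $S^d = \bar\cP_d$, classical Alexander duality on the sphere delivers, for every $j$, an isomorphism
$$\mathcal A\ell:\; \tilde H_{d-j-1}\bigl(\bar\cP_d^{\{\om_\succ\}};\,\Z\bigr) \;\stackrel{\approx}{\longrightarrow}\; \tilde H^{j}\bigl(\cP_d^{\mathbf c\{\om_\succ\}};\,\Z\bigr),$$
which is the same duality that underlies Theorem \ref{thA}. Specializing to $j=|\om|'$ and evaluating $\mathcal A\ell$ on $[\d\bar{\mathsf R}^\om]$ produces the advertised element $\theta_\om \in H^{|\om|'}(\cP_d^{\mathbf c\{\om_\succ\}};\,\Z)$, completing the construction.

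The only nontrivial input is the identification, carried out with correct signs, of the cellular boundary of $\bar{\mathsf R}^\om$ with $\d_\sM(\om)+\d_\sI(\om)$; this will be the main bookkeeping obstacle, but it is established in \cite{KSW2} and may be cited rather than re-derived. Once this identification is in hand, the remainder of the argument is formal: it amounts to observing that a cellular boundary is automatically a cycle in the boundary subcomplex, and then invoking Alexander duality.
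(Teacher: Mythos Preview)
Your proposal is correct and follows essentially the same route as the paper: identify $\d\bar{\mathsf R}^\om$ with the cellular boundary via the formula $\d=\d_\sM+\d_\sI$ (citing \cite{KSW2}), observe that it is a $(d-|\om|'-1)$-cycle supported in $\bar\cP_d^{\{\om_\succ\}}$, and then apply Alexander duality on $S^d$. The paper's proof additionally remarks that the resulting homology class $[\d\bar{\mathsf R}_d^\om]$ is \emph{nontrivial} (it bounds in $\bar\cP_d^{\,\om_\succeq}$ but not in $\bar\cP_d^{\,\om_\succ}$), a point you do not address; this is not part of the lemma as stated, but it is what makes $\theta_\om$ useful downstream.
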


\begin{proof} For any $\om \in \mathbf\Om_{\langle d]}$, take the closed poset $\om_\succ =_{\mathsf{def}} \{\om' \prec \om\} \subset \mathbf\Om_{\langle d]}$ for the role of $\Theta$ in Theorem \ref{thA}. We denote by $\bar{\mathsf R}_d^\om$ the one-point compactification of the closed cell $\mathsf R_d^\om \subset \cP_d$ (the interior $(\mathsf R_d^\om)^\circ$ of $\mathsf R_d^\om$ is an open $(d -|\om|')$-ball). Then the differential  $\d(\omega)$, given by the formula (\ref{eq.d+d}), represents the $(d - |\om|' -1)$-cycle $\d\bar{\mathsf R}_d^\om$ in the chain complex $\mathcal C_\ast(\bar{\mathcal P}_d^{\,\om_\succ}; \Z)$ (note that $\d\bar{\mathsf R}_d^\om$ is a boundary in $\bar{\mathcal P}_d^{\,\om_\succeq}$, but not in $\bar{\mathcal P}_d^{\,\om_\succ}$ !) and thus defines a nontrivial element $[\d\bar{\mathsf R}_d^{\,\om}] \in H_{d - |\om|' -1}(\bar{\mathcal P}_d^{\,\om_\succ}; \Z)$. By the Alexander duality, this element produces a cohomology class $\theta_\om =_{\mathsf{def}} \mathcal A\ell([\d\bar{\mathsf R}_d^\om]) \in H^{|\om|'}(\mathcal P_d^{\mathbf{c}\{\om_\succ\}}; \Z).$
\end{proof}

\begin{example}\label{ex13.2} \emph{If  $d = 6$, we get the following cohomology classes:
\begin{itemize}
\item  $\theta_{(121)} = \mathcal A\ell\big(\bar{\mathsf R}_6^{(31)} - \bar{\mathsf R}_6^{(13)} - \bar{\mathsf R}_6^{(2121)}  + \bar{\mathsf R}_6^{(1212)}\big) \in H^{1}(\mathcal P_6^{\; \mathbf{c}\{(121)_\succ\}}; \Z)$,
\item  $\theta_{(3111)} = \mathcal A\ell\big(\bar{\mathsf R}_6^{(411)} - \bar{\mathsf R}_6^{(321)} + \bar{\mathsf R}_6^{(312)}\big) \in H^{2}(\mathcal P_6^{\mathbf{c}\{ (3111)_\succ\}}; \Z)$,
\item   $\theta_{(31)}  = \mathcal A\ell\big(\bar{\mathsf R}_6^{(4)} - \bar{\mathsf R}_6^{(231)} + \bar{\mathsf R}_6^{(321)}  - \bar{\mathsf R}_6^{(312)}\big) \in H^{2}(\mathcal P_6^{\mathbf{c}\{ (31)_\succ\}}; \Z)$,
\item  $\theta_{(1221)} = \mathcal A\ell\big(\bar{\mathsf R}_6^{(321)} - \bar{\mathsf R}_6^{(141)} + \bar{\mathsf R}_6^{(123)}\big)  \in H^{2}(\mathcal P_6^{\mathbf{c}\{(1221)_\succ\}}; \Z)$. \hfill $\diamondsuit$
\end{itemize}
}
\end{example}

Theorem \ref{th.counting_trajectories} below gives a simple recipe for manufacturing traversing flows  with the desired number of $v$-trajectories of a given combinatorial type $\omega$ on compact smooth manifolds $X$ with boundary. Although the resulting construction is explicit, the topological nature of the pull-back $X$ is opec due to, paraphrasing Thom \cite{Th}, \cite{Th1}, ``the misterious nature of transversality".  \smallskip


\begin{theorem}\label{th.counting_trajectories} For any $\om \in \mathbf\Om_{\langle d]}$, let  $\theta_{\om} \in H^{|\om|'}(\mathcal P_d^{\mathbf{c}\{\om_\succ\}}; \Z)$ be the cocycle that takes the value $\mathsf{lk}(Z,\, \d\bar{\mathsf R}^{\om})$ on each $|\om|'$-dimensional cycle $Z$ from   $H_{|\om|'}(\mathcal P_d^{\mathbf{c}\{\omega_\succ\}}; \Z)$. 
Let $Y$ be an oriented closed and smooth $|\omega|'$-dimensional manifold. We denote by $\hat v$ a non-vanishing vector field that is tangent to the fibers of the projection $\R \times Y \to Y$.  Let a map  $\Phi: Y \to  \mathcal P_{d}^{\; \mathbf{c}(\omega_\succ)}$  be $(\d\mathcal E_d)$-regular in the sense of Definition 3.4 from \cite{K9} (see also (\ref{eq.E}))\footnote{By \cite{K9}, Corollary 3.2, such maps form and open and dense set in the space of all smooth maps.}. 
\begin{itemize}
\item Then the natural coupling  $$\langle \Phi^\ast(\theta_\om),\, [Y] \rangle = \langle \Phi^\ast(\mathcal A\ell(\d\bar{\mathsf R}_d^\om)),\, [Y] \rangle$$ gives the \emph{oriented} count of the $\hat v$-trajectories of the combinatorial type $\om$ in the $(|\om|' +1)$-dimensional convex envelop $(\R \times Y,\,\hat v\big)$ of the pair $$\big(X_\Phi =_{\mathsf{def}}  \{(u, y)|\;  \Phi(y)(u) \leq 0\},\, \hat v\big)  \subset \big(\R \times Y,\,\hat v\big).$$ The combinatorial types of the $\hat v$-trajectories relative to $\d X_\Phi$ belong to the poset $\mathbf{c}\{\om_\succ\} = \om_\preceq =_{\mathsf{def}} \{\om' \in \mathbf\Om_{\langle d]} | \;\om' \succeq \om\}$.

\item If $\Phi$ is transversal to the cell $(\mathsf R_d^{\om})^\circ$ in $\mathcal P_{d}^{\; \mathbf{c}\{\om_\succ\}}$, then the  number of $\hat v$-trajectories in $\R \times Y$, whose intersection with $\d X_\Phi$  has the combinatorial type $\om$, equals the cardinality  of the intersection $\Phi(Y) \cap (\mathsf R_d^{\om})^\circ$. Thus the number of such trajectories is greater than or equal to the absolute value   $|\langle \Phi^\ast(\theta_\om), [Y] \rangle|$.
\end{itemize}
\end{theorem}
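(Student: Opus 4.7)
My approach is to translate everything into intersection theory on $\bar{\mathcal P}_d$ via the classifying map $\Phi$, and then interpret the Alexander duality algebraically.

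First, I would pin down the geometric meaning of $X_\Phi$. For each $y \in Y$, the $\hat v$-trajectory through $(0,y)$ is the vertical line $\mathbb{R} \times \{y\}$, and its intersection with $\partial X_\Phi$ is precisely the real zero locus of the polynomial $\Phi(y) \in \mathcal P_d$, with tangency orders of the trajectory to $\partial X_\Phi$ matching the multiplicities of the real roots. Thus the combinatorial tangency pattern of this vertical trajectory is the multiplicity pattern $\omega^{\Phi(y)}$; in particular, it equals $\omega$ if and only if $\Phi(y) \in (\mathsf R_d^\omega)^\circ$. The hypothesis $\Phi(Y) \subset \mathcal P_d^{\mathbf c(\omega_\succ)}$ translates to the statement that every trajectory's tangency type lies in $\omega_\preceq$, and the $(\partial \mathcal E_d)$-regularity of $\Phi$ (Definition 3.4 of \cite{K9}) guarantees that $X_\Phi$ is a smooth manifold with corners enveloped by $(\mathbb{R}\times Y, \hat v)$, which establishes the geometric setup.

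Second, in the transversal case where $\Phi \pitchfork (\mathsf R_d^\omega)^\circ$, since $\dim Y = |\omega|'$ equals the codimension of $(\mathsf R_d^\omega)^\circ$ in $\mathcal P_d$, the preimage $\Phi^{-1}((\mathsf R_d^\omega)^\circ)$ is a finite set, each point contributing exactly one $\hat v$-trajectory of type $\omega$; this yields the cardinality equality in bullet two. For the cohomology pairing in bullet one, I would exploit the Alexander duality set-up behind Lemma \ref{lem.char_classes}: the class $\theta_\omega = \mathcal{A}\ell([\partial \bar{\mathsf R}_d^\omega])$ is constructed so that, for any $|\omega|'$-cycle $Z$ in $\mathcal P_d^{\mathbf c\{\omega_\succ\}}$, one has $\langle \theta_\omega, Z\rangle = \mathsf{lk}(Z, \partial \bar{\mathsf R}_d^\omega)$. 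Because $\bar{\mathcal P}_d \cong S^d$ has vanishing homology in the dimension of $Z$, the image $\Phi_*[Y]$ bounds a chain $C$ in $\bar{\mathcal P}_d$, and the linking number is computed as the algebraic intersection $C \cdot \partial \bar{\mathsf R}_d^\omega$, which by pushing $C$ transversally across the stratum equals the algebraic intersection $\Phi_*[Y] \cdot (\mathsf R_d^\omega)^\circ$. Identifying this with the signed count of $\Phi^{-1}((\mathsf R_d^\omega)^\circ)$ gives $\langle \Phi^*\theta_\omega, [Y]\rangle$ as the oriented count of $\omega$-trajectories, and its absolute value is bounded above by the unsigned count $\#(\Phi(Y)\cap (\mathsf R_d^\omega)^\circ)$, which furnishes the inequality.

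The main obstacle I foresee is the orientational bookkeeping that makes the chain-theoretic identity $\langle\theta_\omega, \Phi_*[Y]\rangle = \Phi_*[Y]\cdot (\mathsf R_d^\omega)^\circ$ honest: one must fix coherent orientations on $(\mathsf R_d^\omega)^\circ$, on the boundary $\partial\bar{\mathsf R}_d^\omega$ (consistent with the combinatorial formula (\ref{eq.d+d}) for $\partial(\omega)$), and on $Y$, and verify that the linking-number convention used in $\theta_\omega$ matches the intersection-number convention used when pushing a bounding chain across $(\mathsf R_d^\omega)^\circ$. A subsidiary technical point is that $\Phi$ need not meet every stratum transversally, but the $(\partial \mathcal E_d)$-regularity plus the fact that all other strata of $\omega_\preceq\setminus \{\omega\}$ have codimension strictly greater than $|\omega|'$ (or, if of the same codimension, are disjoint from the closure issue by Lemma \ref{lem.char_classes}'s choice of $\omega_\succ$) lets one perturb $\Phi$ within its homotopy class without changing the pairing, reducing the general case to the transversal one treated above.
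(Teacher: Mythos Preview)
Your proposal is correct and follows essentially the same route as the paper: identify $\hat v$-trajectories of type $\omega$ with points of $\Phi^{-1}((\mathsf R_d^\omega)^\circ)$, unwind Alexander duality to get $\langle \Phi^\ast\theta_\omega,[Y]\rangle = \mathsf{lk}(\Phi(Y),\partial\bar{\mathsf R}_d^\omega) = \bar{\mathsf R}_d^\omega \circ \Phi(Y)$, and read this intersection number as the oriented trajectory count, with the transversal case giving the cardinality bound. The paper's proof is terser and does not dwell on the orientational bookkeeping you flag, but the argument is the same.
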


\begin{proof}  The argument is similar to the proof of Lemma \ref{lem.char_classes}.
By  Theorem \ref{thA}, the homology class of the cycle $\d\bar{\mathsf R}_d^{\omega}$ in $H_{d- |\omega|' -1}(\bar{\mathcal P}_d^{\{\om_\succ\}}; \Z)$, represented by formula (\ref{eq.d+d}), is nontrivial since $d- |\omega|' -1$ is the top grading of the differential complex $(\Z[\omega_\succ], \d)$.

The proof amounts to spelling out the nature of Alexander duality $\mathcal A\ell$. By its definition, $\langle Z,\, \mathcal A\ell(\d\bar{\mathsf R}_d^{\omega}) \rangle = \mathsf{lk}(Z, \d\bar{\mathsf R}_d^{\om})$ for any cycle $Z$ in  $\mathcal P_{d}^{\; \mathbf{c}(\om_\succ)}$ of dimension $|\om|'$. Thus
$$\langle \Phi^\ast(\theta_\om), [Y]  \rangle = \langle \theta_\om, \Phi_\ast([Y])  \rangle = \mathsf{lk}(\d\bar{\mathsf R}_d^{\om},\,  \Phi(Y)) = \bar{\mathsf R}^{\om}_d\, \circ \, \Phi(Y).$$

Examining the construction of the space $X_\Phi \subset Y \times \R$, given by $(\mathsf{id} \times \Phi)^{-1}(\mathcal E_d)$, where the hypersurface $\mathcal E_d$ was introduced in (\ref{eq.E}), the latter intersection number gives an \emph{oriented} count of the $\hat v$-trajectories in $\R \times Y $ of the combinatorial type $\om$ relative to the boundary $\d X_\Phi$.

If $\Phi: Y \to  \mathcal P_{d}^{\; \mathbf{c}(\om_\succ)}$ is transversal to the open cell $(\mathsf R^{\om}_d)^\circ$, then the cardinality of the geometric intersection $\Phi(Y) \cap (\mathsf R^{\om})^\circ$ is exactly the total number of $\hat v$-trajectories of the combinatorial type $\om$ with respect to $\d X_\Phi$. The intersection points from $\Phi(Y) \cap (\mathsf R_d^{\om})^\circ$ (equivalently, the trajectories of the type $\om$ with respect to $\d X_\Phi$) come in two flavors, $\{\oplus, \ominus\}$, depending on whether the canonicalal normal orientation of the cell $(\mathsf R_d^{\om})^\circ$ in $\mathcal P_d$ agrees or disagrees with the preferred orientation of the cycle $\Phi(Y)$.
\end{proof}


The next corollary follows directly from Theorem \ref{th.counting_trajectories}.

\begin{corollary}\label{cor.(1221)} Take $\om = (1, \underbrace{2, \dots , 2}_{n}, 1)$.  
Let  $\Phi: Y^n \to \mathcal P_{2n +2}^{\; \mathbf{c}\{\om_\succ\}}$ be as in Theorem \ref{th.counting_trajectories}.

Then  the \emph{oriented}  number of  trajectories of the combinatorial type $\om$ in the pull-back $X^{n+1}_\Phi \subset \R \times Y^n $ equals the linking number of the cycle $\Phi(Y^n)$ with the $(n +1)$-cycle
$$\d\bar{\mathsf R}_{2n +2}^{(12\dots 21)} = \bar{\mathsf R}_{2n +2}^{(32\dots 21)} - \bar{\mathsf R}_{2n +2}^{(142 \dots 21)} + \dots + (-1)^{n -1}\,\bar{\mathsf R}_{2n +2}^{(12 \dots 241)} + (-1)^n\,\bar{\mathsf R}_{2n +2}^{(12 \dots 23)}.$$

In particular, the number of $\om$-tangent trajectories in $X^{n+1}_\Phi$ is at least $$|\, \mathsf{lk}(\Phi(Y^n),\, \d\bar{\mathsf R}_{2n +2}^{(12\dots 21)})|= |\langle h^\ast(\theta_{(12\dots 21)}),\, [Y] \rangle |. \quad \quad \hfill \diamondsuit$$  
\end{corollary}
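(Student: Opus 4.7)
The plan is to specialize Theorem \ref{th.counting_trajectories} to the pattern $\om = (1, \underbrace{2,\dots,2}_n, 1)$ and then compute $\d\bar{\sR}^{\om}_{2n+2}$ by hand, after which the counting and linking assertions of the corollary become transcriptions of the two bullets of the theorem.

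First I would verify the dimensional bookkeeping. The norms are $|\om| = 2 + 2n = 2n+2$, giving $d = 2n+2$; the support is $s_\om = n+2$; and $|\om|' = |\om| - s_\om = n$, matching $\dim Y^n$. Hence the hypotheses of Theorem \ref{th.counting_trajectories} apply with this $\om$, and the pairing $\langle \Phi^\ast(\theta_\om), [Y]\rangle = \mathsf{lk}(\Phi(Y), \d\bar{\sR}^{\om})$ together with the lower bound on the absolute number of $\om$-tangent trajectories in the transversal situation come for free. The only real content is therefore to unpack $\d\bar{\sR}^{\om}$ explicitly.

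For this I would invoke the formula $\d = \d_{\sM} + \d_{\sI}$ of (\ref{eq.d+d}) acting on $\Z[\bfOm_{\langle d]}]$. The decisive simplification is that $|\om| = d$, so every insertion produces $|\sI_k(\om)| = d+2 > d$; thus $\sI_k(\om) \notin \bfOm_{\langle d]}$ and $\d_{\sI}(\om) = 0$ in the chain complex. Only the $n+1$ merges remain, and from $\d_{\sM}(\om) = \sum_{k=1}^{n+1}(-1)^{k+1}\sM_k(\om)$ I would read off: $\sM_1$ glues the initial entries $1,2$ into $(3,2,\dots,2,1)$; for $2 \leq k \leq n$ the operation $\sM_k$ fuses two adjacent interior $2$'s into a $4$ at position $k$, yielding $(1,2,\dots,2,4,2,\dots,2,1)$; and $\sM_{n+1}$ glues the terminal $2,1$ into $(1,2,\dots,2,3)$. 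The alternating signs $+,-,+,\dots,(-1)^{n-1},(-1)^n$ are exactly those displayed in the corollary.

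No genuine obstacle presents itself; the argument is a direct specialization, and the one substantive observation --- that the insertion part of $\d(\om)$ is forced to vanish because $|\om|$ already saturates $d$ --- is a one-line check. The lower bound $|\langle\Phi^\ast(\theta_{(12\dots 21)}), [Y]\rangle|$ on the geometric count follows since, after a $C^\infty$-small perturbation of $\Phi$ (preserving its $(\d\mathcal E_d)$-regularity by Corollary 3.2 of \cite{K9}) to make it transversal to the open cell $(\sR_d^\om)^\circ$, the oriented count is bounded above in absolute value by the unsigned cardinality of $\Phi(Y) \pitchfork (\sR_d^\om)^\circ$, which is precisely the number of $\om$-tangent $\hat v$-trajectories in $X_\Phi$.
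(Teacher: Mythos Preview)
Your proposal is correct and follows essentially the same approach as the paper, which simply declares that the corollary ``follows directly from Theorem \ref{th.counting_trajectories}'' and provides no separate proof. You have merely made explicit the one nontrivial step---the computation of $\d\bar{\sR}_{2n+2}^{(12\dots 21)}$ via the observation that $|\om|=d$ kills the insert part of the differential---which the paper leaves to the reader.
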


\subsection{Comments and questions} The following basic, however, non-trivial question animates many of our previous investigations: 

\emph{``For a given closed poset $\Theta \subset \mathbf\Om_{\langle d]}$, what are the restrictions on the topology of compact manifolds $X$ that admit traversing $v$-flows (or their pseudo-envelops/envelops $(\hat X, \hat v)$) whose tangency patterns avoid $\Theta$?"} \smallskip

In particular, what smooth topological types $X$ may arise via classifying maps $\Phi$ (say, as in Theorem \ref{th.counting_trajectories} or Corollary \ref{cor.(1221)})? We do not have any problems with manufacturing $\d\mathcal E_d$-regular maps from a given compact $n$-manifold $Y$ to $\cP_d^{\mathbf c\Theta}$, $d \equiv 0 \mod 2$, and using such maps to produce embeddings $\a: X \subset \R \times Y$ and traversing flows on $X$ that avoid the $\Theta$-patterns. However, the topological types of such $X$'s are beyond our control: we get what we get... The resulting $X$ is subject to many restrictions, whose nature we do not understand conceptually. Let us sketch just a couple examples which indicate that the restrictions on $X$ by $\Theta$ can be severe. 

In \cite{K10}, and \cite{K7}, Chapters 2-4, we proposed an answer this question for $3$-folds $X$. For them, the \emph{minimal} number $gc(X)$ of $v$-trajectories of the combinatorial type $\om = (1221)$ was introduced as \emph{a measure of complexity} of a $3$-fold $X$ and was linked directly to the combinatorial complexity $c(X)$ of their $2$-spines. Via this link, $gc(X)$ is related to the classification of $3$-folds. For instance, a connected compact oriented $3$-fold with a simply-connected boundary, which admits a traversing flow that avoids $\Theta = (1221)$, is a connected sum of several $3$-balls and products $S^1 \times S^2$ (\cite{K10}, Theorem 3.14). Therefore, no other $3$-folds with a simply-connected boundary can admit a convex pseudo-envelop or even a traversing flow that avoids the tangency pattern $(1221)$.

In a quite different setting, consider a closed hyperbolic $(n+1)$-manifold $Z$. Let $X$ be obtained from $Z$ by deleting a number $(n+1)$-balls. Then no such $X$ can support a traversing $v$-flow that avoids the set of isolated $\hat v$-trajectories of combinatorial types $\{\om \in \mathbf \Om_{|\sim|' = n}\}$. Indeed, the positivity of Gromov's simplicial semi-norm $\|Z\|_\D$ rules out the possibility of such an avoidance (see \cite{AK}, Theorem 1). In fact, the positivity of simplicial semi-norms of various homology classes $h \in H_\ast(X; \R)$ is the only general mechanism known to us that imposes constraints on  the combinatorial tangency types of any generic traversing flow on $X$ \cite{K11}. 
\smallskip

Let us conclude with the following remark. Unfortunately, we do not know examples of invariants that distinguish between the quasitopies  $\mathcal{QT}^{\mathsf{emb}}_{d, d'}(\hat X,\hat v; \mathbf{c}\Theta; \mathbf{c}\Theta')$ and $\mathcal{QT}^{\mathsf{sub}}_{d, d'}(\hat X, \hat v; \hfill \break \mathbf{c}\Theta; \mathbf{c}\Theta')$ of convex envelops and pseudo-envelops. It seems natural to think about invariants that utilize the $k$-multiple self-intersection manifolds  $\{\Sigma_k^{\a^\d}\}_{k \geq 2}$ of $\a^\d$ (see Remark \ref{rem.Sigma}). However, for the convex pseudo-envelops, the analogue of the distinguishing map (3.3) in \cite{K9} is trivial: in fact, for a submersion $\a: X \to \hat X$, the bordism class of the map $\pi \circ \a^\d \circ p_1: \Sigma_k^{\a^\d} \to \hat X$ vanishes, due to arguments as in \cite{K9}, Corollary 3.1. As a result, a direct analogue of Proposition 3.6  from \cite{K9} is vacuous in the environment of convex envelops. 
\smallskip


{\it Acknowledgment:} The author is grateful to the Department of Mathematics of Massachusetts  Institute of Technology for many years of hospitality.


\end{document}